\crefname{hypothesis}{Hypothesis}{Hypotheses}
\title{Second-order uniformly asymptotic-preserving space-time-ImEx 
schemes for hyperbolic balance laws with stiff relaxation\thanks{\funding{The first author is funded by a joint PhD grant from the French Ministry of Defence (AID - Agence Innovation D\'efense) and the R\'egion \^Ile-de-France (DIM MathInnov).}}}
\author{Louis Reboul\thanks{CMAP, CNRS, \'Ecole polytechnique, Institut Polytechnique de Paris, Route de Saclay, 91128 Palaiseau Cedex, France 
  (\email{louis.reboul@polytechnique.edu}).}
\and Teddy Pichard\footnotemark[2]
\and Marc Massot\footnotemark[2]}
\newcommand{\StateVec}{\boldsymbol{w}}
\newcommand{\flx}{\boldsymbol{f}}
\newcommand{\St}{\boldsymbol{S}}
\newcommand{\dx}{\partial_{x}}
\newcommand{\dt}{\partial_{t}}
\begin{document}
\maketitle

\begin{abstract}
We consider hyperbolic systems of conservation laws with relaxation source terms leading to a diffusive asymptotic limit under a parabolic scaling.
We introduce a new class of second-order in time and space numerical schemes, which are uniformly asymptotic preserving schemes.  
The proposed Implicit-Explicit (ImEx) approach, does not follow the usual path relying on the method of lines, either with multi-step methods or Runge-Kutta methods, or semi-discretized in time equations, but is inspired from the Lax-Wendroff approach with the proper level of implicit treatment of the source term. 
As a result, it yields a very compact stencil in space and time and we are able to rigorously show that both the second-order accuracy and the stability conditions are independent of the fast scales in the asymptotic regime, including the study of boundary conditions. 
We provide an original derivation of ${l}^2$ and ${l}^{\infty}$ stability conditions of the scheme that do not deteriorate the second order accuracy without relying on a limiter of any type in the linear case, in particular for shock solutions, and extend such results to the nonlinear case, showing the novelty of the method. The prototype system for the linear case is the hyperbolic heat equation, whereas barotropic Euler equations of gas dynamics with friction are the one for the nonlinear case. 
The method is also able to yield very accurate steady solutions in the nonlinear case when the relaxation coefficient in the source term depends on space. A thorough numerical assessment of the proposed strategy is provided by investigating smooth solutions, solutions with shocks and solutions leading to a steady state with space dependent relaxation coefficient.
\end{abstract}

\begin{keywords}
  Second order Implicit-Explicit numerical methods, hyperbolic systems of conservation laws with stiff source terms, asymptotic-preserving schemes in diffusive limit, ${l}^{\infty}$-stable and shock capturing methods
\end{keywords}

\begin{AMS}
 65M06, 65M12, 35L65, 76N15, 76M45, 82C40
\end{AMS}



\section{Introduction}\label{sec:intro}


We construct and analyze numerical schemes adapted to the hyperbolic-parabolic regimes arising for linear and non-linear hyperbolic problems with a relaxation term. Such systems can result from relaxation approximations of conservation laws~\cite{Chen-Lervemore-Liu94,Natalini96} as well as fluid limits of kinetic equations~\cite{Bardos-Golse-Levermore-I91}.
Our incentive originates in the field of plasma physics to perform high-fidelity simulations of sheaths through a new class of numerical methods~\cite{alvarezlaguna20jcp,Reboul2021VKI} for fluid models~\cite{alvarezlaguna20}. However, we provide here the key ideas in a simpler context, where numerical analysis is attainable. 

We focus on a parabolic scaling corresponding to long-term term behaviors with a stiff source term, and that can be expressed by the introduction of a small parameter $\varepsilon>0$.
Consider the system in non-dimensional form:
\begin{equation}\label{eq:scaledNLhypSys}
  \partial_t \StateVec + \frac{1}{\varepsilon}\partial_x \flx\left(\StateVec\right)=\frac{\sigma}{\varepsilon^{2}}\St\left( \StateVec\right)
\end{equation}
with $\StateVec \in\Omega\subset \mathbb{R}^{n}$ a state vector, $\flx:\Omega\mapsto \mathbb{R}^{n}$ a flux function with a Jacobian matrix $A\left(\StateVec \right)$ with a full basis of eigenvalues so that \Cref{eq:scaledNLhypSys} is hyperbolic when  $\sigma=0$. 
For the sake of introducing and analyzing a new class of schemes within the scope of this work, we consider only linear source term 
$\St\left( \StateVec \right)=B\StateVec$ with $B\in \mathcal{M}_{n}$.
The asymptotic regime of \Cref{eq:scaledNLhypSys} is defined by $\varepsilon\!\to\!0$. The limit depends on the specific flux $\flx$ and source term $\St$. In this paper, we investigate both a linear case: the classical hyperbolic heat equation (HHE), and a non-linear case: the Euler equations with friction (Euler-friction). These two models are representative of the main difficulties that arise when trying to approximate solutions of equations of the form \cref{eq:scaledNLhypSys}.
As it is well known (see e.g. \cite{Jin11,Caflisch-Jin-Russo97}), the disparity of scales renders the classical Finite-Volume methods overly diffusive and consequently poorly accurate in such regimes, whereas stability conditions become exceedingly restrictive.

Preserving an accurate description of the asymptotic phenomena with good stability properties while discretizing the original PDE has been the subject of a vast body of literature within the framework of so-called asymptotic-preserving (AP) schemes. Several clever strategies have been designed: 
combination of source and flux terms~\cite{gosse2002asymptotic,Jin-Levermore96,berthon2011asymptotic}, control of the numerical dissipation~\cite{chalons2019high}, relaxation techniques when the stiffness of the problem is carried by linear operators~\cite{Jin-Pareschi-Toscani98}, ImEx strategies in order to benefit from stability conditions close to that of fully implicit schemes but at the cost of explicit computations
~\cite{boscarino2013implicit}. Several contributions, as the present one, focus on the parabolic scaling specifically~\cite{Klar98,Jin-Levermore96,Jin-Pareschi-Toscani98,Jin-Pareschi_Toscani00}. 

We focus here on ImEx techniques, that have shown to be suitable for the design of high-order in time and space AP-methods (see \cite{albi2020implicit} and ref. therein) over a wide range of application. However, their construction usually focuses on the time discretization 
and tend to overlook the challenges linked to the spatial approximations. For instance, stability analysis is routinely performed under the hypothesis that the discrete spatial operator is exact (so-called semi-discretization in time). As a result, such methods may lack of suitable spatial discretizations and sometime exploit unnecessarily complex methods. Similarly, truncation error studies are customarily replaced by simpler procedure, such as verifying only that the limit scheme when $\varepsilon\to0$ produces an approximation of the desired order of the limit system, and potentially missing accuracy loss in intermediate regimes. Current ImEx methods decouple time and space, either using linear-multistep (LM, \cite{albi2020implicit}) or Runge-Kutta (RK, \cite{boscarino2013implicit}) methods, which require large stencils and, consequently, complex boundary and initial conditions are used. Similar conclusions can be drawn for other methods (e.g. \cite{chalons2019high}). 

The purpose of the present contribution is to provide a class of numerical schemes that are high-order in space and time, uniformly asymptotic preserving in a diffusive limit, and applicable to linear and non-linear systems of conservation laws with stiff source terms. It is designed by coupling intrinsically time and space, as in the Lax-Wendroff approach~\cite{Lax_Wendroff} for systems of conservation laws, but in the presence of source~\cite{Zhang-Tabarrok99}. Such an approach does not rely on a classical ODE approach or semi-discretized equations and it yields an ImEx strategy with a very compact stencil in space and time. It has the ability to deal properly with boundary conditions and offers the possibility to conduct fine stability and truncation error analyses. In this paper, we stick to second order accuracy and only aim at providing stability contraints, which are uniform in the small parameter $\varepsilon$. But we do not go fully implicit on the (potentially) non-linear fluxes. Thus, we do not alleviate the natural parabolic stiffness limitation of explicit methods for diffusion equations as in~\cite{boscarino2013implicit,Boscarino-Russo13,Boscarino-LeFloch-Russo14}. However, we introduce a novel implicit treatment of the source  (Reverse Runge-Kutta methods presented in \cref{supsec:RRK}) and provide an original derivation of $l^2$ and $l^{\infty}$ stability conditions of the uniformly AP second order scheme in space and time.
Thus, it yields the ability to treat shocks without resorting to limiters in the linear case. 
For the first and second order schemes applied to the HHE, we conduct a detailed truncation error and stability analysis, clarifying which spatial operator are the viable options compatible with the chosen discrete time structure. We also carefully consider the question of boundary conditions and provide numerical test-cases that involve non-trivial boundary conditions. Our method is then applied to Euler-friction.
To our knowledge, no second order method both in time and space for this set of equations was provided in the literature. We eventually present a numerical assessment of the new scheme with a thorough numerical study of smooth and shock solutions for the HHE, and of smooth, shock solutions as well as a stationary solution with variable in space relaxation coefficient, showing the ability of the method to capture accurately the steady state, even if not formally well-balanced (in the sense of \cite{bouchut2004nonlinear}).

The paper is organized as follows: \Cref{sec:models} presents the models and asymptotic limits we will investigate in the present work. \Cref{sec:Classical} introduces relevant numerical notions as well as recalls the limitations of classical finite volume approach. First and second order AP ImEx methods are introduced for the HHE and thoroughly studied in \Cref{sec:HHEord1,sec:HHEord2}. Methods are then extended to the nonlinear case of Euler-friction in \Cref{sec:EF} and verified numerically in \Cref{sec:num}. 


\section{Models}\label{sec:models}

In what follows, we introduce the HHE, a linear model on which the new methods will be derived and their numerical analysis conducted, as well as the Euler-friction equations that will stand as our prototype nonlinear model.

\subsection{Hyperbolic heat equations}\label{sbsec:HHE_model}
The system of HHE \cite{Ver58,Cat48,Cat58,Max67} reads:
\begin{subequations}\label{eq:HHE}
  \begin{align}
    \partial_t E + \frac{1}{\varepsilon}\partial_x F &=0, \label{eq:HHE_E}\\
    \partial_t F + \frac{1}{\varepsilon}\partial_x E &=-\frac{\sigma}{\varepsilon^{2}}F. \label{eq:HHE_F}
  \end{align}
\end{subequations}
We do not detail here the various related physics (propagation of heat at finite speed, low-order moment model for radiative transfer modeling \cite{buet2012design}, or electrical line transmission so-called the telegrapher's equation).

\subsubsection{Goldstein-Taylor model}\label{sbsbsec:GoldTayl_model}
An equivalent form of \Cref{eq:HHE} can be found in setting $u = E+F$, $v = E-F$ to obtain the Goldstein-Taylor model \cite{gosse2002asymptotic}:
\begin{subequations}\label{eq:GoldT}
  \begin{align}
    \partial_t u + \frac{1}{\varepsilon}\partial_x u &=-\frac{\sigma}{\varepsilon^{2}}\frac{u-v}{2}, \\
    \partial_t v - \frac{1}{\varepsilon}\partial_x v &=-\frac{\sigma}{\varepsilon^{2}}\frac{v-u}{2}.
  \end{align}
\end{subequations}
This equivalent formulation is particularly convenient to study the stability of numerical methods developed in this paper. The variables $u$ and $v$ can also be seen as the Riemann invariants of the convective part of equations \cref{eq:HHE}.

\subsubsection{Boundary conditions}\label{sbsbsec:HHE_BC}
We consider a finite domain of simulation $I = \left[ x_L,x_R \right]$, with $x_L,x_R\in \mathbb{R}$, $x_L<x_R$. The question of boundary conditions naturally arises in this setting. We will consider two types of boundary conditions:
\begin{itemize}
\item[-] \emph{Periodic} boundry conditions:
$E(x_L) = E(x_R)$, 
$F(x_L) = F(x_R)$. 
\item[-] \emph{Hybrid Dirichlet-Neumann} boundary conditions: imposing a constant Dirichlet boundary conditions for $E$,  the structure of \Cref{eq:HHE} imposes $\partial_x F = -\varepsilon \partial_t E = 0$, that is homogeneous Neumann boundary conditions on $F$.
\end{itemize}
To our knowledge this second case is seldom detailed in the literature, although it appears indirectly in some exact test-cases used, e.g. in~\cite{chalons2019high,buet2012design}.

\subsubsection{Energy dissipation}
If we consider a smooth solution of \Cref{eq:HHE}, we can obtain by multiplying \Cref{eq:HHE_E} by $E$ and \Cref{eq:HHE_F} by $F$:
\begin{equation}\label{eq:HHE_maxPrcp_der}
  \partial_t \left(E^{2} + F^{2} \right)+\frac{2}{\varepsilon} \partial_x \left( EF \right) = -\frac{2\sigma}{\varepsilon^{2}}F^{2}.
\end{equation}
Assuming periodic or hybrid homogeneous Dirichlet-Neumann boundary conditions and integrating relation \cref{eq:HHE_maxPrcp_der} over $\Omega$ we obtain that:
\begin{equation}\label{eq:ContinuousMaxPrinciple}
  \partial_{t}\int_{\Omega} \left(E^{2}+F^{2}\right)\left( t,x \right) \text{d}x \leq 0.
\end{equation}
We will expect our numerical schemes to obey a discrete equivalent of \cref{eq:ContinuousMaxPrinciple}, namely it should feature some form of $l^{2}$-stability.

\subsubsection{Asymptotic regime}\label{sbsec:HHE_ar}
A general method to study the asymptotic limit of a model of the form \cref{eq:scaledNLhypSys} is to use a Chapman-Enskog like approach, see e.g.~\cite{chalons2019high}. It consists in considering the following formal expansion of the state vector $\StateVec$:
\begin{equation}\label{eq:formalExp}
  \StateVec = \StateVec_0 + \varepsilon \StateVec_1 + \mathcal{O}\left(\varepsilon^{2}\right).
\end{equation}
Injecting this expansion into \Cref{eq:scaledNLhypSys} yields a hierarchy of equations that customarily leads to the limit equation at order zero, that is a close-form equation on $\StateVec_0$.
Using expansion \cref{eq:formalExp} into \cref{eq:HHE_F} yields $F_0 = 0$ (at order $\mathcal{O}\left( \varepsilon^{-2}\right)$) and $\sigma F_1 = \partial_x E_0$ (order $\mathcal{O}\left( \varepsilon^{-1}\right)$), which ultimately leads to the diffusive limit:
\begin{equation}\label{eq:DiffLimHHE}
  \partial_t E_0 - \partial_{x}\left(\frac{1}{\sigma} \partial_{x} E_0 \right)  = 0,
\end{equation}
when injected into \Cref{eq:HHE_E} at order $\mathcal{O}\left( \varepsilon^{0}\right)$. This relatively simple linear system already remarkably embodies the notion of asymptotic regime, in that it behaves as an hyperbolic system when $\varepsilon \sim 1$ but degenerates to a parabolic diffusive limit in the regime $\varepsilon\to 0$. As such it encompasses a significant part of the challenges that arise when designing numerical methods for systems of the form \cref{eq:scaledNLhypSys} while being linear and therefore making the study of accuracy and stability properties for newly elaborated schemes relatively amenable. 

\subsection{Euler-friction}
The second model considered in this paper is that of baro\-tropic Euler equations with high friction:
\begin{subequations}\label{eq:EF}
  \begin{align}
    \partial_t \rho + \frac{1}{\varepsilon} \partial_x \left( \rho u \right) &= 0, \label{eq:EF_rho}\\
    \partial_t \left( \rho u \right) + \frac{1}{\varepsilon} \partial_x \left( \rho u^{2} + p(\rho)\right) &= -\frac{\sigma}{\varepsilon^{2}} \rho u, \label{eq:EF_rhoU}
  \end{align}
\end{subequations}
where $\rho$ is the density of the fluid and $u$ its macroscopic velocity and where we assume that pressure verifies $p^\prime(\rho)>0$ \cite{berthon2011asymptotic}. As compared to HHE, \Cref{eq:EF} introduces all the difficulties linked with nonlinear hyperbolic systems, such as the onset of shocks in finite time from regular initial data or the necessity to preserve the invariance of the convex space of admissible states $\left\{ \left( \rho,\rho u \right)\in \mathbb{R}^{2}, \rho \geq 0 \right\}$.

\subsubsection{Linearized equations}
It is possible to linearize \cref{eq:EF} around a solution $\rho_{0},\left( \rho u \right)_{0}$, considering the perturbed solution $\rho = \rho_{0}+\tilde{\rho}$ and $\left( \rho u \right)_{0}+\widetilde{\rho u}$ with $\tilde{\rho} \ll \rho_{0}$ and $\widetilde{\rho u}\ll\left( \rho u \right)_{0}$, leading to the equations:
\begin{subequations}\label{eq:linearized_EF}
  \begin{align}
    \partial_{t} \tilde{\rho}+\frac{1}{\varepsilon}\partial_{x}\widetilde{\rho u}&=0, \\
    \partial_{t} \widetilde{\rho u}+\frac{1}{\varepsilon}\partial_x \left( 2 u_{0} \widetilde{\rho u}+\left( c^{2}_{0}-u^{2}_{0} \right) \tilde{\rho}\right)&= -\frac{\sigma}{\varepsilon^{2}}\widetilde{\rho u},
  \end{align}
\end{subequations}
where $u_{0}=\left( \rho u \right)_{0}/\rho_{0}$ and $c_{0}=p^{\prime}\left( \rho_{0} \right)$. One can notice that when $u_{0}=0$ and $c_{0}=1$ equations \cref{eq:linearized_EF} reduce to equations \cref{eq:HHE}, by setting $E = \tilde{\rho}$ and $F = \widetilde{\rho u}$.

\subsubsection{Asymptotic regime}
Using an analogous approach as the one used for the HHE~\cref{eq:HHE}, one can derive the diffusive limit when $\varepsilon\to0$, see e.g.~\cite{berthon2011asymptotic}:
\begin{equation}\label{eq:DiffLimEF}
  \partial_t \rho_{0} - \partial_x \left( \frac{1}{\sigma} \partial_x \left( p\left( \rho_{0} \right) \right) \right) = 0,
\end{equation}
which in the isothermal case and with the law of perfect gases, that is $p(\rho) = c^{2} \rho$, $c>0$, is nearly identical to the diffusive limit \cref{eq:DiffLimHHE}.


\section{Numerical basic notions and notations}\label{sec:Classical}

The new methods are derived and their properties studied on the HHE. 
Consequently in this section we introduce, in a linear framework, the notations used in the rest of this paper. 
We recall the notions of stability, truncation and consistency errors for linear numerical schemes.

\subsection{Numerical approximation}
Let $w$ be a solution of \Cref{eq:scaledNLhypSys} over $I\times \mathbb{R}_{+}$. Finite-volume methods aim at approximating the average of $w$ over the cells $C_{j} = \left[x_{j+1/2}, x_{j-1/2}  \right]$ at time $t^{n} = t_0 + n\Delta t$: 
\begin{equation}\label{eq:avrg}
  \bar{w}^{n}_{j} = \frac{1}{\left|C_j\right|} \int_{C_j} w \left(t^{n},x\right) \text{d}x,
\end{equation}
for $1\leq j \leq N$ and $n \geq 0$, where $x_{j+1/2}= j\Delta x+x_L$ with a mesh size $\Delta x = \left( x_R-x_L \right)/\left( N+1 \right)$ and a time step $\Delta t>0$ both assumed constant to alleviate notations. The approximation of $\bar{w}^{n}_{j}$ is usually denoted $w^{n}_{j}$, and the accuracy of a numerical method is evaluated via its \emph{global error}: $e^{n}_{j} = \bar{w}^{n}_{j}-w^{n}_{j}$.
A method is said to be of order $p$ in time and $q$ in space if $||e^{n}_{j}|| = \mathcal{O}\left(\Delta t^{p}+\Delta x^{q}\right)$ where $||\cdot||$ is a suitable norm depending on the expected regularity of the solutions. Here we will consider the norms $||\cdot||_{\infty}$ and $||\cdot||_2$.

\subsection{Upwind scheme}
Our prototype linear finite-volume scheme is derived using upwind fluxes and centered source terms for \cref{eq:GoldT}  \cite{buet2012design}. For HHE \cref{eq:HHE}, it reads:
\begin{subequations}\label{eq:HHE_upwind}
  \begin{align}
    \frac{E^{n+1}_{j}-E^{n}_{j}}{\Delta t} +\frac{F^{n}_{j+1}-F^{n}_{j-1}}{2\Delta x}-\frac{\Delta x}{2\varepsilon} \frac{E^{n}_{j+1}-2E^{n}_{j}+E^{n}_{j-1}}{\Delta x^{2}} &= 0, \label{eq:HHE_upwindE}\\
    \frac{F^{n+1}_{j}-F^{n}_{j}}{\Delta t} +\frac{E^{n}_{j+1}-E^{n}_{j-1}}{2\Delta x}-\frac{\Delta x}{2\varepsilon} \frac{F^{n}_{j+1}-2F^{n}_{j}+F^{n}_{j-1}}{\Delta x^{2}} &= -\frac{\sigma}{2\varepsilon^{2}} F^{n}_{j}. \label{eq:HHE_upwindF}
  \end{align}
\end{subequations}
On this simple linear model all classical finite volume approximated fluxes (upwind, Roe, Rusanov, HLL) yield the exact same scheme, hence our choice of the upwind scheme as an example and reference to be later compared with AP-schemes.

\subsection{Boundary conditions}
The computation of $E^{n+1}_{1}$, $E^{n+1}_{N}$, $F^{n+1}_{1}$ and $F^{n+1}_{N}$ in the scheme \cref{eq:HHE_upwind} involves the values $E^{n+1}_{0}$, $E^{n+1}_{N+1}$, $F^{n+1}_{0}$ qnd $F^{n+1}_{N+1}$ in ghost-cells. To impose periodic boundary conditions the natural choice is:
\begin{equation}\label{bc:periodic}
  E_{0}^{n} = E_{N}^{n}, \quad F_{0}^{n} = F_{N}^{n}, \quad E_{N+1}^{n} = E_{1}^{n}, \quad F_{N+1}^{n} = F_{1}^{n}.
\end{equation}
A second order discrete form of the hybrid Dirichlet-Neumann boundary conditions presented in \Cref{sbsbsec:HHE_BC} reads:
\begin{equation}\label{bc:hybrid}
  \frac{E_{0}^{n}+E_{1}^{n}}{2} = E_{L}, \quad \frac{F_{1}^{n}-F_{0}^{n}}{\Delta x} = 0, \quad \frac{E_{N}^{n}+E_{N+1}^{n}}{2}=E_{R}, \quad \frac{F_{N+1}^{n}-F_{N}^{n}}{\Delta x} =0,
\end{equation}
where $E_{L}$ and $E_{R}$ are the values imposed on $E$ at the left and right boundaries.

Both these sets of boundary conditions are compatible with any scheme for the hyperbolic heat equations \cref{eq:HHE} that are presented in this paper.

\subsection{Convergence}
In the linear framework, convergence is obtained through Lax theorem (see e.g. \cite{Lax_thm}) by showing that the scheme is consistent and stable.

\subsubsection{Stability}

This discretization of \cref{eq:HHE} yields the \emph{$l^{2}$-diminishing} property:
\begin{equation}\label{eq:MaxPrinciple}
  \sum_{j} \left( \left(E^{n+1}_{j}\right)^{2} + \left(F^{n+1}_{j}\right)^{2} \right)\Delta x \leq \sum_{j} \left( \left(E^{n}_{j}\right)^{2} + \left(F^{n}_{j}\right)^{2} \right)\Delta x.
\end{equation}
This automatically grants the \emph{$l^{2}$-stability}, that is:
\begin{equation*}
  \exists K>0, \forall n\geq 0, \quad\sum_{j} \left( \left(E^{n}_{j}\right)^{2} + \left(F^{n}_{j}\right)^{2} \right)\Delta x \leq K \sum_{j} \left( \left(E^{0}_{j}\right)^{2} + \left(F^{0}_{j}\right)^{2} \right)\Delta x.
\end{equation*}
\begin{theorem}\label{thm:UpStability}
The upwind scheme \cref{eq:HHE_upwind} is $l^{2}$-diminishing if the condition: 
  \begin{equation}
    \frac{\Delta t}{\varepsilon \Delta x}+\frac{\sigma\Delta t}{\varepsilon^{2}} \leq 1
  \end{equation}
is satisfied or equivalently $\Delta t \leq \Delta t_{max}$ with $\Delta t_{max} = (\varepsilon^{2}\Delta x)/(\varepsilon+\sigma\Delta x)$.
\end{theorem}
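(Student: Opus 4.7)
My plan is to recast the scheme \cref{eq:HHE_upwind} in the Goldstein--Taylor variables $u = E+F$, $v = E-F$ and exploit the fact that in these characteristic coordinates the convective part of \cref{eq:HHE_upwind} decouples into two pure upwind updates (speeds $\pm 1/\varepsilon$) coupled only through the centered source. More precisely, adding and subtracting \cref{eq:HHE_upwindE} and \cref{eq:HHE_upwindF}, one checks that the scheme is equivalent to
\begin{equation*}
u^{n+1}_j = u^n_j - \frac{\Delta t}{\varepsilon \Delta x}\bigl(u^n_j - u^n_{j-1}\bigr) - \frac{\sigma \Delta t}{2\varepsilon^2}\bigl(u^n_j - v^n_j\bigr),
\end{equation*}
\begin{equation*}
v^{n+1}_j = v^n_j + \frac{\Delta t}{\varepsilon \Delta x}\bigl(v^n_{j+1} - v^n_j\bigr) - \frac{\sigma \Delta t}{2\varepsilon^2}\bigl(v^n_j - u^n_j\bigr).
\end{equation*}

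\paragraph{Convex combination step.} Introducing the shorthand $a = \Delta t/(\varepsilon \Delta x)$ and $s = \sigma \Delta t/(2\varepsilon^2)$, I would rewrite these updates as $u^{n+1}_j = a\, u^n_{j-1} + (1-a-s)\, u^n_j + s\, v^n_j$ and symmetrically for $v^{n+1}_j$. The three weights are nonnegative and sum to one precisely under the hypothesis $a + 2s \le 1$, which is exactly the stated CFL restriction (note $a+s \le a+2s \le 1$ gives the sign of the middle coefficient). At that point Jensen's inequality applied to $x \mapsto x^2$ yields $(u^{n+1}_j)^2 \le a(u^n_{j-1})^2 + (1-a-s)(u^n_j)^2 + s(v^n_j)^2$ and likewise $(v^{n+1}_j)^2 \le a(v^n_{j+1})^2 + (1-a-s)(v^n_j)^2 + s(u^n_j)^2$.

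\paragraph{Summation and return to $(E,F)$.} I then sum the two inequalities over $j$. Under periodic boundary conditions \cref{bc:periodic} the index shifts $\sum_j (u^n_{j-1})^2 = \sum_j (u^n_j)^2$ and $\sum_j (v^n_{j+1})^2 = \sum_j (v^n_j)^2$ are free; under the hybrid boundary conditions \cref{bc:hybrid} the corresponding boundary contributions combine consistently with the Dirichlet/Neumann relations to yield a nonpositive boundary flux, so the same bound holds. Using that the coefficients sum to one, this collapses to $\sum_j \bigl[(u^{n+1}_j)^2 + (v^{n+1}_j)^2\bigr] \le \sum_j \bigl[(u^n_j)^2 + (v^n_j)^2\bigr]$. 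To conclude it suffices to invoke the pointwise identity $E^2 + F^2 = \tfrac{1}{2}(u^2 + v^2)$, which transfers the inequality verbatim to the $(E,F)$ variables and gives \cref{eq:MaxPrinciple}, i.e.\ the $l^2$-diminishing property.

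\paragraph{Expected obstacle.} The algebra in steps~1 and~2 is routine; the only delicate point is bookkeeping of boundary terms in the shifted sums when the boundary conditions are not periodic, to ensure that they drop out (or at least have the right sign) in a way consistent with the discretization \cref{bc:hybrid}. Once that is cleared, the equivalence $\Delta t \le \Delta t_{max}$ with $\Delta t_{max} = \varepsilon^2 \Delta x /(\varepsilon + \sigma \Delta x)$ follows from a direct algebraic rearrangement of $a + 2s \le 1$.
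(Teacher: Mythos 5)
Your proof is correct, and it is worth noting up front that the paper does not actually prove \cref{thm:UpStability}: it defers entirely to the cited reference. The route you take --- passing to the Goldstein--Taylor variables of \cref{eq:GoldT}, recognizing \cref{eq:HHE_upwind} as two decoupled upwind transport updates tied together by the centered relaxation term, writing the update as a convex combination, and then applying Jensen and summing --- is precisely the technique the paper itself uses later for the $l^\infty$-stability of the ImEx1-ctr scheme (\cref{thm:LinftyStabAP1ctr}), so there is no real methodological divergence; if anything your argument is stronger, since the convex-combination form gives $l^\infty$-diminishing for free and the $l^2$ bound as a corollary, whereas the paper's own $l^2$ proofs for the ImEx schemes rely on von Neumann analysis. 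Three points deserve tightening. First, the scheme as printed in \cref{eq:HHE_upwind} is missing the factor $1/\varepsilon$ on the centered flux differences and carries a spurious $1/2$ on the source; your $u,v$ form corresponds to the corrected (clearly intended) scheme, and you should say so explicitly rather than silently fixing it. Second, your phrase that the weights are nonnegative ``precisely under'' $a+2s\le 1$ is inaccurate: with $s=\sigma\Delta t/(2\varepsilon^2)$ nonnegativity holds if and only if $a+s\le 1$, so the stated hypothesis is strictly stronger than what your argument needs and the proof goes through a fortiori (in fact it yields the sharper sufficient condition $\Delta t/(\varepsilon\Delta x)+\sigma\Delta t/(2\varepsilon^2)\le 1$). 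Third, the boundary bookkeeping you flag is a genuine caveat rather than a formality: with the non-homogeneous Dirichlet data of \cref{bc:hybrid} the discrete energy need not decrease (energy can enter through the boundary), so the theorem should be read under periodic conditions \cref{bc:periodic} or homogeneous boundary data, exactly as the continuous estimate \cref{eq:ContinuousMaxPrinciple} is derived. With those clarifications your proof is complete.
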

See \cite{buet2012design} for a proof of this result.
This very strict stability constraint can be alleviated a little by using an implicit source term but remains of the order $\Delta t_{max} = \mathcal{O} \left( \varepsilon\Delta x\right)$.

\subsubsection{Consistency}
Consistency is studied via truncation error analysis. The \emph{truncation error} is defined by $\varepsilon^{n}_{j} = \bar{w}^{n+1}_{j}-w^{n+1}_j$, where $w^{n+1}_{j}$ is the approximation yielded by the scheme at time $t^{n+1}$ under the assumption that $w^{n}_{j}=\bar{w}^{n}_{j}$ is exact at previous time $t^n$.
For a stable scheme, the truncation error is linked to the global error via a relation of the form $||e^{n}_{j}|| \leq C \sum_{k=0}^{n} ||\varepsilon^{k}_j||$. Consequently, we need $\varepsilon^{n}_{j} = \Delta t\mathcal{O}\left(\Delta t^{p}+\Delta x^{q} \right)$ for the scheme to be order $p$ and $q$ respectively in time and space.

An equivalent notion that allows to alleviate computations is the \emph{consistency error} $c^{n}_{j} = \varepsilon^{n}_{j}/\Delta t$. Formally, the consistency error can be seen as the error obtained by injecting the exact solution of problem \cref{eq:HHE} into the numerical scheme \cref{eq:HHE_upwind}.

Lastly, as smooth solutions are considered in consistency studies, we use the equivalent formalism of finite difference for the sake of legibility. 
Basically, we replace $\bar{w}^{n}_{j}$ by $w\left(t^{n}, x_{j}\right)$ with $x_{j} = \left( j+1/2 \right)\Delta x$ in error terms.
\begin{theorem}\label{thm:UpConsistency}
  The consistency error for the upwind scheme \cref{eq:HHE_upwind} satisfies:
  \begin{equation}
      c^{n}_{j}(E) = \mathcal{O}\left(\Delta t+\frac{\Delta x}{\varepsilon} \right), \quad
      c^{n}_{j}(F) = \mathcal{O}\left(\Delta t+\Delta x + \frac{\Delta x^{2}}{\varepsilon} \right).
  \end{equation}
\end{theorem}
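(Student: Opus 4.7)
The proof is a standard truncation-error analysis by Taylor expansion about the grid node $(t^n, x_j)$. The plan is to substitute the exact smooth solution $(E,F)$ of \cref{eq:HHE} into each discrete operator of \cref{eq:HHE_upwind}, expand every difference quotient, cancel the zeroth-order terms using the PDE itself, and identify the leading residual (divided by $\Delta t$) as $c^n_j$.

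For the $E$ equation, the relevant expansions are
\begin{equation*}
\frac{E(t^{n+1},x_j)-E(t^n,x_j)}{\Delta t} = \partial_t E + \tfrac{\Delta t}{2}\partial_{tt}E + O(\Delta t^2), \quad \tfrac{1}{\varepsilon}\tfrac{F(x_{j+1})-F(x_{j-1})}{2\Delta x} = \tfrac{1}{\varepsilon}\partial_x F + O(\Delta x^2/\varepsilon),
\end{equation*}
together with $\tfrac{\Delta x}{2\varepsilon}\tfrac{E(x_{j+1})-2E(x_j)+E(x_{j-1})}{\Delta x^2} = \tfrac{\Delta x}{2\varepsilon}\partial_{xx}E + O(\Delta x^3/\varepsilon)$. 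Using \cref{eq:HHE_E} to eliminate $\partial_t E + \tfrac{1}{\varepsilon}\partial_x F$ leaves $c^n_j(E) = \tfrac{\Delta t}{2}\partial_{tt}E - \tfrac{\Delta x}{2\varepsilon}\partial_{xx}E + O(\Delta t^2 + \Delta x^2/\varepsilon)$, which is $O(\Delta t + \Delta x/\varepsilon)$ as announced.

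The same mechanical expansion on the $F$ equation, combined with \cref{eq:HHE_F} to cancel $\partial_t F + \tfrac{1}{\varepsilon}\partial_x E + \tfrac{\sigma}{\varepsilon^2}F$, produces $c^n_j(F) = \tfrac{\Delta t}{2}\partial_{tt}F - \tfrac{\Delta x}{2\varepsilon}\partial_{xx}F + O(\Delta t^2 + \Delta x^2/\varepsilon)$. The only term whose size is not immediately $O(\Delta t + \Delta x + \Delta x^2/\varepsilon)$ is the numerical-diffusion remainder $-\tfrac{\Delta x}{2\varepsilon}\partial_{xx}F$. I would reshuffle it by chaining the two PDEs: differentiating \cref{eq:HHE_E} in $x$ yields $\partial_x F = -\varepsilon\,\partial_t E$, whence $\partial_{xx}F = -\varepsilon\,\partial_{tx}E$; substituting $\partial_x E = -\varepsilon\,\partial_t F - (\sigma/\varepsilon) F$ from \cref{eq:HHE_F} and differentiating in $t$ then gives $\partial_{xx}F = \varepsilon^2 \partial_{tt}F + \sigma\,\partial_t F$, so
\begin{equation*}
-\tfrac{\Delta x}{2\varepsilon}\partial_{xx}F = -\tfrac{\varepsilon\Delta x}{2}\partial_{tt}F - \tfrac{\sigma \Delta x}{2\varepsilon}\partial_t F,
\end{equation*}
and iterating the same trade on $\partial_t F$ through \cref{eq:HHE_F} produces only $O(\Delta x)$ contributions together with residuals that fit into the $O(\Delta x^2/\varepsilon)$ tail.

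The main obstacle is precisely this last algebraic chain for the $F$ equation: the naive Taylor remainder is $O(\Delta x/\varepsilon)$, and one has to exploit the parabolic-scaling structure of \cref{eq:HHE_F} so as to trade one factor $\tfrac{1}{\varepsilon}$ for one factor $\varepsilon$, exhibiting the sharper $O(\Delta x)$ behaviour. The $E$-equation bound, by contrast, follows directly from Taylor expansion with no further manipulation, and the final statement is assembled from the two bounds via the triangle inequality and standard $O$-arithmetic.
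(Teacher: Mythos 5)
Your Taylor-expansion argument is correct and is essentially the route the paper intends: the paper omits the details here, deferring to \cite{buet2012design}, but uses the identical mechanism (expand, cancel with the PDE, then use the scaling structure of \cref{eq:HHE_E}--\cref{eq:HHE_F} to downgrade the $F$-derivatives) in its later consistency proofs such as \cref{thm:accuracyHHEord1}. One caveat on your closing step: the decisive estimate is already contained in your identity $\partial_{xx}F=-\varepsilon\,\partial_{tx}E$, which together with the standing uniform-regularity (well-prepared, Chapman--Enskog) hypothesis on $E$ gives $-\tfrac{\Delta x}{2\varepsilon}\partial_{xx}F=\mathcal{O}(\Delta x)$ at once, whereas the further rewriting as $\varepsilon^{2}\partial_{tt}F+\sigma\,\partial_t F$ followed by ``iterating the trade on $\partial_t F$ through \cref{eq:HHE_F}'' is superfluous and slightly misleading --- substituting \cref{eq:HHE_F} for $\partial_t F$ reintroduces $\varepsilon^{-1}\partial_x E+\sigma\varepsilon^{-2}F$ and does not by itself gain a power of $\varepsilon$; the smallness $\partial_t F=\mathcal{O}(\varepsilon)$ is part of the well-preparedness assumption on the solution, not a consequence of the PDE alone.
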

Hence, the global error of the upwind scheme scales as $e^{n}_{j} = \mathcal{O} \left( \Delta t + \Delta x\left(1 + 1/ \varepsilon\right)\right)$. The proof is similar to the one proposed in \cite{buet2012design}.
We conclude with a few remarks:
\begin{itemize}
  \item[-] \Cref{thm:UpConsistency,thm:UpStability} assess the claim that classical finite volume methods are inaccurate and come at high computational cost in the diffusive regime.
  \item[-] From now on, instead of considering regimes defined by $\varepsilon \to 0$ or $\varepsilon \sim 1$, as in the continuous case, we rather consider the regimes $\varepsilon \ll \Delta x$, $\varepsilon \sim \Delta x$ and $\Delta x \ll \varepsilon$, more suitable to the discrete formalism. 
\end{itemize}


\section{First-order AP-schemes}\label{sec:HHEord1}

We first present our formalism, that paves the way for the second order and nonlinear extensions of the AP-methods, on a first order AP-scheme applied to HHE \cref{eq:HHE}. At order one, various time integration methods (Runge-Kutta, multi-step, Taylor series LW) are nearly interchangeable and consequently this formalism is similar to the one used in \cite{albi2020implicit,boscarino2013implicit}.

\subsection{Principle}
Let us write the general structure of the scheme we consider:
\begin{subequations}\label{eq:StructAP1HHE}
  \begin{align}
    \frac{E^{n+1}_{j}-E^{n}_{j}}{\Delta t}+\frac{1}{\varepsilon}\left[ \partial_x F \right]^{n+1}_{j}&=0, \label{eq:StructAP1HHE_E}\\
    \frac{F^{n+1}_{j}-F^{n}_{j}}{\Delta t}+\frac{1}{\varepsilon}\left[ \partial_x E \right]^{n+1}_{j}&=-\frac{\sigma}{\varepsilon^{2}}F^{n+1}_{j}, \label{eq:StructAP1HHE_F}
  \end{align} 
\end{subequations}
where $\left[ \partial_x E \right]^{n+1}_{j}$ and $\left[ \partial_x F \right]^{n+1}_{j}$ must be consistent approximations of $\partial_x E$ and $\partial_x F$ at time $t^{n+1}$ and position $x_{j}$. This structure encompasses an implicit version of Scheme~\cref{eq:HHE_upwind} where fluxes and source are approximated at time $t^{n+1}$ instead of $t^n$.   
The reason for approximating $\partial_x F$ at time $t^{n+1}$ is that it features some fast scales and needs to be implicit in order to obtain a CFL stability condition less restrictive than that of the explicit upwind scheme.

The main difference with other ImEx schemes found in the literature \cite{albi2020implicit,boscarino2013implicit} is that we consider also $\left[\partial_x E \right]^{n+1}_{j}$ instead of $\left[\partial_x E \right]^{n}_{j}$ in \Cref{eq:StructAP1HHE_F}. We did so for two reasons: to make the scheme more symmetric, which will make stability analysis significantly simpler, and to anticipate the fact that any part of the flux can contain fast scales in the general case \cref{eq:scaledNLhypSys}, and more particularly for \cref{eq:EF}.

A fully implicit scheme would come at great computational cost as one would have to solve linear systems. The flux terms in \cref{eq:StructAP1HHE} are defined through a formal discrete equivalent of the partial derivative with respect to space of \Cref{eq:HHE}:
\begin{subequations}\label{eq:DescreteXderHHE}
  \begin{align}
    \frac{\left[ \partial_x E\right]^{n+1}_{j}-\left[ \partial_x E \right]^{n}_{j}}{\Delta t}+\frac{1}{\varepsilon}\left[ \partial_{xx} F\right]^{n}_{j} &=0, \label{eq:DescreteXderHHE_E}\\
    \frac{\left[ \partial_x F \right]^{n+1}_{j}-\left[ \partial_x F \right]^{n}_{j}}{\Delta t}+\frac{1}{\varepsilon}\left[ \partial_{xx} E\right]^{n}_{j} &= -\frac{\sigma}{\varepsilon^{2}}\left[ \partial_x F \right]^{n+1}_{j}.\label{eq:DescreteXderHHE_F}
  \end{align}
\end{subequations}
with explicit diffusion terms and an implicit source term. The reason for this choice is in the strength of this ImEx approach: it comes at the cost of a scalar linear implicit scheme, that is almost at explicit cost, but with significantly improved stability properties.

Injecting \Cref{eq:DescreteXderHHE} into \Cref{eq:StructAP1HHE}, we obtain:
\begin{subequations}\label{eq:AP1_HHE_fullStruct}
  \begin{align}
    \frac{E^{n+1}_{j}-E^{n}_{j}}{\Delta t}+\frac{M}{\varepsilon}\left[ \partial_x F\right]^{n}_{j}-\frac{M\Delta t}{\varepsilon^{2}}\left[\partial_{xx} E \right]^{n}_{j}&=0, \\
    \frac{F^{n+1}_{j}-F^{n}_{j}}{\Delta t}+\frac{M}{\varepsilon}\left[\partial_x E \right]^{n}_{j}-\frac{M\Delta t}{\varepsilon^{2}}\left[\partial_{xx} F \right]^{n}_{j} &= -\frac{\sigma M}{\varepsilon^{2}}F^{n}_{j},
  \end{align}
\end{subequations}
where $M = 1/(1+(\sigma\Delta t)/\varepsilon^{2})$. 
This factor $M$ embodies the effect of our implicit step \cref{eq:DescreteXderHHE} at the cost of an explicit step. When $\Delta t \ll \varepsilon$, it behaves as a perturbation of one of order one in $\Delta t$: $M = 1 + \mathcal{O}\left( \Delta t \right)$. However, in regimes where $\varepsilon \ll \Delta t$, it follows a scaling $M = \mathcal{O}\left(\varepsilon^{2} \right)$ so the $M$ factor will help control terms that scale as ${\varepsilon}^{-1}$ or ${\varepsilon^{-2}}$.

We will systematically discretize the second order derivative in space terms using the classical centered approximations:
\begin{equation*}
  \left[\partial_{xx} E \right]^{n}_{j} = \frac{E^{n}_{j+1}-2E^{n}_{j}+E^{n}_{j-1}}{\Delta x^{2}}, \quad \left[ \partial_{xx} F \right]^{n}_{j} = \frac{F^{n}_{j+1}-2F^{n}_{j}+F^{n}_{j-1}}{\Delta x^{2}}.
\end{equation*}

The choice of discretization for $\left[\partial_x E \right]^{n}_{j}$ and $\left[\partial_x F \right]^{n}_{j}$ is critical to recover uniform accuracy with respect to $\varepsilon$. We can use for instance the upwind or centered discretization. Ultimately our choice lie in a trade-off between accuracy and stability.
In what follows, we discuss such options, showing that surprisingly the centered approximation is a viable option while using an upwind approximation of the fluxes, as in~\cref{eq:HHE_upwind}, is not accurate in intermediate regimes where $\Delta x \sim \varepsilon$ (this case is detailed in \Cref{supsec:HHEord1FV}).

\subsection{Centered discretization of the fluxes}\label{sbsec:HHEord1ctr}
If we use center discretization:
\begin{equation*}
  \left[\partial_x E \right]^{n}_{j} = \frac{E^{n}_{j+1}-E^{n}_{j-1}}{2\Delta x},\quad \left[\partial_x F \right]^{n}_{j} = \frac{F^{n}_{j+1}-F^{n}_{j-1}}{2\Delta x},
\end{equation*}
we obtain the following first order scheme, afterward called ImEx1-ctr scheme:
\begin{subequations}\label{eq:AP1_ctr_HHE}
  \begin{align}
    \frac{E^{n+1}_j - E^n_j}{\Delta t}+\frac{M}{\varepsilon}\frac{F^n_{j+1}-F^n_{j-1}}{2\Delta x}-\frac{M\Delta t}{\varepsilon^2}\frac{E^n_{j+1}-2E^n_{j}+E^n_{j-1}}{\Delta x^2} &=0, \\ 
    \frac{F^{n+1}_j - F^n_j}{\Delta t}+\frac{M}{\varepsilon}\frac{E^n_{j+1}-E^n_{j-1}}{2\Delta x}-\frac{M\Delta t}{\varepsilon^2}\frac{F^n_{j+1}-2F^n_{j}+F^n_{j-1}}{\Delta x^2} &=-M\frac{\sigma}{\varepsilon^2}F^n_j. 
  \end{align}
\end{subequations}

\subsubsection{$l^\infty$-stability}
\begin{theorem}\label{thm:LinftyStabAP1ctr}
  The ImEx1-ctr scheme \cref{eq:AP1_ctr_HHE} together with periodic \cref{bc:periodic} or hybrid \cref{bc:hybrid} boundary conditions is $l^{\infty}$-diminishing for the variables $u,v$, that is:
  \begin{equation}\label{eq:LinftyStabilityDef}
    \max_{1\leq j \leq N} \left(  \left|u^{n+1}_{j}\right|, \left|v^{n+1}_{j} \right| \right) \leq \max_{1\leq j \leq N} \left( \left|u^{n}_{j}\right|, \left|v^{n}_{j}\right| \right),
  \end{equation}
  under the condition:
  \begin{equation}\label{cnd:LInftyStbltyHHEord1}
    \Delta t_{min}:= \frac{\varepsilon \Delta x}{2} \leq \Delta t \leq \Delta t_{max}:= \frac{\sigma\Delta x^{2}}{4}\frac{1+\sqrt{1+2\left( \frac{4\varepsilon}{\sigma\Delta x} \right)^{2}}}{2}.
  \end{equation}
\end{theorem}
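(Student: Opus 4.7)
The plan is to exploit the Goldstein--Taylor form \cref{eq:GoldT}: rewrite \cref{eq:AP1_ctr_HHE} in the Riemann invariant variables $u = E+F$ and $v = E-F$ by adding and subtracting the two update equations. Because the centered discretization treats $E$ and $F$ symmetrically and the source term is diagonal in $(u,v)$ (it equals $\pm M\sigma(u-v)/(2\varepsilon^{2})$ after simplification), the coupling between $u$ and $v$ that remains after this change of variables reduces to a single local term. Concretely, I expect to obtain
\begin{equation*}
  u^{n+1}_{j} = \alpha_{-}\, u^{n}_{j-1}+\alpha_{0}\,u^{n}_{j}+\alpha_{+}\,u^{n}_{j+1}+\beta\, v^{n}_{j},
\end{equation*}
with $\alpha_{\pm}=\tfrac{M\Dt^{2}}{\SmallParam^{2}\Dx^{2}}\pm\tfrac{M\Dt}{2\SmallParam\Dx}$, $\beta=\tfrac{M\sigma\Dt}{2\SmallParam^{2}}$ and $\alpha_{0}=1-\alpha_{+}-\alpha_{-}-\beta$, and the symmetric formula for $v^{n+1}_{j}$ (same coefficients, reversed spatial stencil, with $u^{n}_{j}$ in place of $v^{n}_{j}$).

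Once this form is in hand, the proof reduces to checking that $u^{n+1}_{j}$ and $v^{n+1}_{j}$ are \emph{convex combinations} of values bounded by $\max_{j}(|u^{n}_{j}|,|v^{n}_{j}|)$. The fact that the coefficients sum to one is automatic from the construction. Non-negativity of $\alpha_{+}$ and $\beta$ is immediate. The condition $\alpha_{-}\geq 0$ rewrites as $\Dt\geq \SmallParam\Dx/2$, which is exactly $\Dt_{min}$ in \cref{cnd:LInftyStbltyHHEord1}. The condition $\alpha_{0}\geq 0$, after multiplying through by $\SmallParam^{2}+\sigma\Dt$ and using the definition of $M$, becomes the quadratic inequality
\begin{equation*}
  4\Dt^{2}-\sigma\Dx^{2}\,\Dt-2\SmallParam^{2}\Dx^{2}\leq 0,
\end{equation*}
whose positive root, after standard manipulations, gives exactly the $\Dt_{max}$ stated in \cref{cnd:LInftyStbltyHHEord1}. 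Under these two conditions, bounding each coefficient gives $|u^{n+1}_{j}|\leq \max_{k}(|u^{n}_{k}|,|v^{n}_{k}|)$ and similarly for $v^{n+1}_{j}$, then taking the maximum over $j$ yields \cref{eq:LinftyStabilityDef}.

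The remaining work is the treatment of the two boundary prescriptions \cref{bc:periodic,bc:hybrid}. For the periodic case, this is immediate: the ghost cells simply recycle interior values, so the convex combination argument applies verbatim at $j=1$ and $j=N$. For hybrid Dirichlet--Neumann, one must re-express the ghost values in Riemann-invariant form; using $E^{n}_{0}=2E_{L}-E^{n}_{1}$ and $F^{n}_{0}=F^{n}_{1}$ yields $u^{n}_{0}=2E_{L}-v^{n}_{1}$ and $v^{n}_{0}=2E_{L}-u^{n}_{1}$, and similarly at the right boundary. Substituting these into the boundary update and again checking signs of the modified coefficients (which essentially reflect the symmetric stencil across the boundary) shows that the convex combination structure is preserved in the first and last cells, so the global maximum still does not increase.

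The main obstacle I anticipate is the algebraic manipulation leading to the exact form of $\Dt_{max}$: the factor $M$ mixes $\SmallParam^{2}$ and $\sigma\Dt$ in a non-obvious way, and clearing denominators must be done carefully to land on the stated closed-form root. The second delicate point is the boundary accounting, where one has to verify that the substitutions for the ghost values do not spoil the sign of $\alpha_{-}$ or $\alpha_{0}$ at the extremal cells; all other steps are either immediate or routine.
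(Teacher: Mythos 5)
Your proposal follows essentially the same route as the paper's own proof: the change of variables to the Riemann invariants $(u,v)$, the identical convex-combination coefficients (the paper's $\lambda_2,\lambda_3$ are your $\alpha_\pm$, except that you attach the ``$+$'' coefficient to $u^n_{j+1}$ whereas the centered flux actually puts $\tfrac{M\Delta t^2}{\varepsilon^2\Delta x^2}-\tfrac{M\Delta t}{2\varepsilon\Delta x}$ on $u^n_{j+1}$ --- an immaterial swap since the resulting sign conditions are unchanged), the same quadratic $4\Delta t^2-\sigma\Delta x^2\,\Delta t-2\varepsilon^2\Delta x^2\le 0$ whose positive root is the stated $\Delta t_{max}$, and the same ghost-cell substitution argument for the two boundary prescriptions. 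There is no genuine gap or departure; if anything, your hybrid-boundary ghost values correctly retain the $2E_L$ contribution that the paper's proof silently drops (i.e.\ it implicitly assumes homogeneous Dirichlet data), which is the more careful bookkeeping.
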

\begin{proof}
  To conduct the stability analysis, we switch to variables $u=E+F$ and $v = E-F$, for which the scheme~\cref{eq:AP1_ctr_HHE}
can be rewritten:
\begin{subequations}\label{eq:reorderedAP1_ctr_HHE_uv}
  \begin{align}
    u^{n+1}_j &= \lambda_{1}u^n_j+\lambda_{2}u^n_{j+1} +\lambda_{3}u^n_{j-1}+ \lambda_{4}v^n_j,& \lambda_{1} =& 1-\frac{2M\Delta t^2}{\varepsilon^2\Delta x^2}-\frac{M\sigma \Delta t}{2\varepsilon^2},\\
    v^{n+1}_j &= \lambda_{1}v^n_j+\lambda_{2}v^n_{j-1}+\lambda_{3}v^n_{j+1}+ \lambda_{4}u^n_j,& \lambda_{2} =& \frac{M\Delta t^2}{\varepsilon^2\Delta x^2}-\frac{M\Delta t}{2\varepsilon\Delta x},\\
    \lambda_{3} &= \frac{M\Delta t^2}{\varepsilon^2\Delta x^2}+\frac{M\Delta t}{2\varepsilon\Delta x}, &\lambda_{4} =& \frac{M\sigma \Delta t}{2\varepsilon^2}. \nonumber
  \end{align}
\end{subequations}
One notices that $\lambda_{1}+\lambda_{2}+\lambda_{3}+\lambda_{4} = 1$. Therefore we can deduce from \Cref{eq:reorderedAP1_ctr_HHE_uv} that the quantities $u^{n+1}_{j}$ and $v^{n+1}_{j}$ at time $t^{n+1}$ are convex combinations of the quantities $u^{n}_{j}$ and $v^{n}_{j}$ at time $t^{n}$ so long as we have $\lambda_{1}\geq 0$, $\lambda_{2}\geq 0$, $\lambda_{3}\geq 0$ and $\lambda_{4}\geq 0$. We systematically have $\lambda_{3}\geq 0$ and $\lambda_{4}\geq 0$.
Condition $\lambda_{3}\geq 0$ leads to $\Delta t\geq \varepsilon \Delta x / 2 = \Delta t_{min}$. 
Condition $\lambda_{1}\geq 0$ 
breaks down to the study of the domain of positivity of a second-order polynomial in $\Delta t$, ultimately yielding $\Delta t_{max}$ such as defined in \cref{cnd:LInftyStbltyHHEord1}.

Under these conditions, we can express $u^{n+1}_{j}$ and $v^{n+1}_{j}$ as convex combination of $u^{n+1}_{j}$ and $v^{n+1}_{j}$ and consequently we have the discrete maximum principle:
\begin{equation*}
  \min_{0\leq j \leq N+1} \left(u^{n}_{j},v^{n}_{j} \right) \leq \min_{1\leq j \leq N} \left(u^{n+1}_{j},v^{n+1}_{j} \right) \leq \max_{1\leq j \leq N} \left(u^{n+1}_{j},v^{n+1}_{j} \right) \leq \max_{0\leq j \leq N+1} \left(u^{n}_{j},v^{n}_{j} \right),
\end{equation*}
where we recall that indexes $j=0,N+1$ stand for the ghost cells used to enforce boundary conditions. Under periodic boundary conditions \cref{bc:periodic} we have $u^{n}_{0} = u^{n}_{N}$, $u^{n}_{N+1}=u^{n}_{1}$, $v^{n}_{0} = u^{n}_{N}$ and $v^{n}_{N+1}=u^{n}_{1}$. Meanwhile under hybrid boundary conditions \cref{bc:hybrid}, we have $u^{n}_{0} = -v^{n}_{1}$, $u^{n}_{N+1}=-v^{n}_{N}$, $v^{n}_{0} = -u^{n}_{0}$ and $v^{n}_{N+1}=-u^{n}_{N}$. In both case \cref{eq:LinftyStabilityDef} follows, which concludes the proof.
\end{proof}
\newpage
A few remarks can be drawn from the previous studies:
\begin{itemize}
  \item[-] One observes that $\Delta t_{max}$ is always strictly bigger than $\Delta t_{min}$:
  \begin{equation}
    \Delta t_{max} =\frac{\sigma\Delta x^{2}}{4}\frac{1+\sqrt{1+2\left( \frac{4\varepsilon}{\sigma\Delta x} \right)^{2}}}{2} > \frac{\sigma\Delta x^{2}}{4}\frac{\sqrt{2\left( \frac{4\varepsilon}{\sigma\Delta x} \right)^{2}}}{2} = \sqrt{2} \Delta t_{min}.
  \end{equation}
It is always possible to make the ImEx1-ctr scheme
stable  irrespectively of $\varepsilon$.
\item[-]  In the regime $\varepsilon \ll \Delta x$ we have the classical parabolic stability condition:
    \begin{equation}
    \Delta t_{max} \underset{\varepsilon\to0}{\sim} \frac{\sigma \Delta x^{2}}{4}.
  \end{equation}
    In the regime $\Delta x \ll \varepsilon$, we have a classical hyperbolic stability conditions:
  \begin{equation}
    \Delta t_{max} \underset{\Delta x\to0}{\sim} \frac{\varepsilon\Delta x}{\sqrt{2}}.
  \end{equation}
  \item[-]  The numerical diffusion in scheme \cref{eq:AP1_ctr_HHE}, essential to stability, scales as $\Delta t$. This explains the need for a lower bound in $\Delta t$ to avoid suppressing the stabilizing mechanism of the scheme. 
  Possible ways of lifting this restriction are discussed at the end of this section.
  \item[-]  The $l^{\infty}$-stability is in general stronger than the $l^{2}$-stability. The convex combination form \cref{eq:reorderedAP1_ctr_HHE_uv} ensures that the scheme does not produce any spurious numerical oscillations around discontinuities. This formulation also allows to retrieve the energy inequality \cref{eq:MaxPrinciple}.
\end{itemize}

\subsubsection{$l^2$-stability}
Under a slightly less
limiting condition, $l^{2}$-stability follows.
\begin{theorem}\label{thm:L2stbltyHHEord1}
  The ImEx1-ctr scheme \cref{eq:AP1_ctr_HHE} together with periodic boundary conditions \cref{bc:periodic} is $l^{2}$-diminishing under the condition:
  \begin{equation}\label{cnd:L2stbltyHHEord1}
    \Delta t \leq \Delta t_{max} =\frac{\sigma\Delta x^2}{4}\frac{1+\sqrt{1+\left(\frac{4\varepsilon}{\sigma\Delta x}\right)^2}}{2}.
  \end{equation}
\end{theorem}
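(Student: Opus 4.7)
The plan is to use von Neumann (Fourier) analysis, which is sharp for this linear constant-coefficient scheme on a periodic grid. I would work in the $(u,v)$ variables of \cref{eq:reorderedAP1_ctr_HHE_uv}, since the transformation between $(E,F)$ and $(u,v)$ is an orthogonal rescaling and the $l^2$-diminishing property transfers between the two up to a harmless constant factor. Injecting Fourier modes $u^n_j = \hat u^n e^{ikj\Delta x}$, $v^n_j = \hat v^n e^{ikj\Delta x}$ and writing $\theta = k\Delta x$, the scheme produces the $2\times 2$ amplification matrix
\begin{equation*}
G(\theta) = \begin{pmatrix} a(\theta) & \lambda_4 \\ \lambda_4 & \overline{a(\theta)} \end{pmatrix}, \qquad a(\theta) = \lambda_1 + \lambda_2 e^{i\theta} + \lambda_3 e^{-i\theta}.
\end{equation*}
By Parseval's identity on the periodic grid, $l^2$-diminishing is equivalent to $\|G(\theta)\|_2 \leq 1$ for every $\theta \in [0, 2\pi)$.

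Next, I would compute this spectral norm via $G^{*} G$. The off-diagonal entries of $G^{*}G$ take the form $2\lambda_4 \bar a$ (resp. $2\lambda_4 a$) and its diagonal is the constant $|a|^2 + \lambda_4^2$, so the eigenvalues of $G^{*}G$ are $(|a|\pm\lambda_4)^2$ and hence $\|G(\theta)\|_2 = |a(\theta)| + \lambda_4$. Using the convex-combination identity $\lambda_1+\lambda_2+\lambda_3+\lambda_4 = 1$ already noted in the proof of \cref{thm:LinftyStabAP1ctr}, the condition $|a(\theta)| \leq 1 - \lambda_4$ rewrites as $|a(\theta)|^2 \leq (\lambda_1+\lambda_2+\lambda_3)^2$. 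Expanding with $\cos\theta$, $\sin\theta$ and applying the half-angle identities $1-\cos\theta = 2\sin^2(\theta/2)$ and $\sin^2\theta = 4\sin^2(\theta/2)\cos^2(\theta/2)$, the resulting inequality factors to
\begin{equation*}
\lambda_1(\lambda_2+\lambda_3) + 4\lambda_2\lambda_3\cos^2(\theta/2) \geq 0 \quad \text{for all } \theta.
\end{equation*}

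Since this expression is affine in $y := \cos^2(\theta/2) \in [0,1]$, its minimum over $\theta$ is attained at one of the two endpoints. Substituting the explicit expressions for $\lambda_1,\ldots,\lambda_4$ from \cref{eq:reorderedAP1_ctr_HHE_uv} and using the identity $\lambda_4 = (1-M)/2$ with $M = \varepsilon^2/(\varepsilon^2+\sigma\Delta t)$, I expect one endpoint to produce a trivially satisfied inequality while the other gives the genuine binding constraint, essentially the sign condition $\lambda_1 \geq 0$. Clearing the implicit dependence on $\Delta t$ through $M$ converts this into a quadratic inequality in $\Delta t$ whose positive root matches the stated $\Delta t_{max}$ of \cref{cnd:L2stbltyHHEord1}. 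The main obstacle is the careful algebraic manipulation needed to unravel the $M$-dependent condition into a clean quadratic in $\Delta t$, together with verifying that, unlike in the $l^\infty$ case of \cref{thm:LinftyStabAP1ctr}, no spurious lower bound $\Delta t_{min}$ is reintroduced in the $l^2$ setting, which is precisely the \emph{slightly less limiting} improvement claimed.
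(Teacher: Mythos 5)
Your proposal is correct, but it follows a genuinely different and in fact sharper route than the paper. The paper splits the amplification matrix as $A=\frac{1}{2}(A_1+A_2)$, isolating the hyperbolic/diffusive part from the source part, and applies the triangle inequality $\|A\|\leq\frac{1}{2}(\|A_1\|+\|A_2\|)$; the condition $\|A_2\|\leq 1$ is automatic and $\|A_1\|\leq 1$ yields \cref{cnd:L2stbltyHHEord1}. This is shorter but lossy. You instead compute the spectral norm of the full amplification matrix exactly: your identification $\|G(\theta)\|_2=|a(\theta)|+\lambda_4$ is correct (the eigenvalues of $G^{*}G$ are indeed $(|a|\pm\lambda_4)^2$ since $\lambda_4\geq 0$), and your factored condition $\lambda_1(\lambda_2+\lambda_3)+4\lambda_2\lambda_3\cos^2(\theta/2)\geq 0$ checks out: writing $p=\frac{M\Delta t^2}{\varepsilon^2\Delta x^2}$, $q=\frac{M\Delta t}{2\varepsilon\Delta x}$, $b=\lambda_4=\frac{1-M}{2}$, the endpoint $\cos^2(\theta/2)=1$ evaluates to $2p(1-b)-4q^2=p>0$ (trivially satisfied, using $q^2=Mp/4$), while the endpoint $\cos^2(\theta/2)=0$ gives $2p\,\lambda_1\geq 0$, i.e.\ $\lambda_1\geq 0$. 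The one inaccuracy is your final claim that the positive root of the resulting quadratic ``matches the stated $\Delta t_{max}$'': solving $\lambda_1\geq 0$ gives $\Delta t\leq\frac{\sigma\Delta x^2}{4}\frac{1+\sqrt{1+2\left(\frac{4\varepsilon}{\sigma\Delta x}\right)^2}}{2}$, which is the $l^\infty$ bound of \cref{cnd:LInftyStbltyHHEord1}, not \cref{cnd:L2stbltyHHEord1}; the factor $2$ inside the square root is absent from the stated $l^2$ bound. Since your exact threshold is strictly larger than the stated one, the theorem follows a fortiori, so the proof is valid — and as a bonus your sharp analysis confirms the paper's own remark that the true $l^2$ limit coincides with the $l^\infty$ one and carries no lower bound $\Delta t_{min}$. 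What the paper's approach buys is brevity and a structure (separate treatment of the self-adjoint source block) that generalizes directly to the second-order scheme in \cref{thm:L2stbltyHHEord2}; what yours buys is optimality of the stability region.
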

\begin{proof}
  Let us first notice that for all $n\geq0, 1\leq j\leq N$:
  \begin{equation*}
    \left( E^{n}_{j} \right)^{2}+\left( F^{n}_{j} \right)^{2}=\frac{1}{4}\left( \left( u^{n}_{j} + v^{n}_{j}\right)^{2}+\left( u^{n}_{j}-v^{n}_{j} \right)^{2} \right)=\frac{1}{2}\left( \left( u^{n}_{j} \right)^{2}+\left( v^{n}_{j} \right)^{2} \right),
  \end{equation*}
  or in other words $||E^{n}_{j}||^{2}_{l^{2}}+||F^{n}_{j}||^{2}_{l^{2}}=\frac{1}{2}\left( ||u^{n}_{j}||^{2}_{l^{2}}+||v^{n}_{j}||^{2}_{l^{2}} \right)$ so we can show that \cref{eq:MaxPrinciple} holds for variables $u$ and $v$. Because we assume periodic boundary conditions we only need to show that the Fourier modes
  $\hat{u}^{n}_{j} = \hat{u}e^{i\left( n\omega \Delta t-jk\Delta x \right)}$ and $\hat{v}^{n}_{j} = \hat{v}e^{i\left( n\omega \Delta t-jk\Delta x \right)}$,
  with $\omega,k\in \mathbb{R}$ are not amplified in $l^{2}$-norm by the scheme. 
  We have:
\begin{gather*}
    w^{n+1}_j = Aw^n_j, \quad
    w^n_j=
    \begin{pmatrix}
    \hat{u}^n_j\\
    \hat{v}^n_j
    \end{pmatrix}, \quad
    A=
    \begin{pmatrix}
    1-a-ic-b&b\\
    b&1-a+ic+b
    \end{pmatrix},\\
    a=\frac{4M\Delta t^2}{\varepsilon^2\Delta x^2}\sin^2\left(\frac{k\Delta x}{2}\right), \quad
    c=\frac{M\Delta t}{\varepsilon\Delta x}\sin\left(k\Delta x\right), \quad
    b=\frac{\sigma M\Delta t}{2\varepsilon^2}.
\end{gather*}
We can separate the hyperbolic structure from source and diffusion terms:
\begin{equation*}
    A = \frac{1}{2}\left(A_1+A_2\right), \quad
    A_1 =
    \begin{pmatrix}
    1-2a-i2c&0\\
    0&1-2a+i2c
    \end{pmatrix},\quad
    A_2 =
    \begin{pmatrix}
    1-2b&2b\\
    2b&1-2b
    \end{pmatrix}.
\end{equation*}
Matrix $A_2$ is self-adjoint, it can be diagonalized in an orthogonal basis and consequently we have $||A_2||=\rho\left(A_2\right)=\left|1-4b\right|$. Because $b$ is non-negative it is equivalent to $-1\leq 1-4b$ or $2b\leq 1$, i.e. $M\frac{\sigma \Delta t}{\varepsilon^2}\leq 1$, which always holds by definition of $M$.

The matrix $A_1$ is diagonal so that we have $||A_1||^2=\left(1-2a\right)^2+\left( 2c \right)^2$. Setting $M^\prime=2M$ for convenience in our notations, we have:
\begin{align*}
    ||A_1||^2
    =& 1-M^\prime\frac{4\Delta t^2}{\varepsilon^2\Delta x^2}\sin^2\left(\frac{k\Delta x}{2}\right)\left(2-M^\prime\left(1-\sin^2\left(\frac{k\Delta x}{2}\right)\left(1-\frac{4\Delta t^2}{\varepsilon^2\Delta x^2}\right)\right)\right).
\end{align*}
Consequently in order to satisfy the condition $||A_1||^2\leq 1$ we simply need the condition:
\begin{equation*}
    2-M^\prime\left(1-\sin^2\left(\frac{k\Delta x}{2}\right)\left(1-\frac{4\Delta t^2}{\varepsilon^2\Delta x^2}\right)\right)\geq 0.
\end{equation*}
This condition holds if 
$(4\Delta t^2)/(\varepsilon^2\Delta x^2)\leq 1$,
so we consider the case $(4\Delta t^2)/(\varepsilon^2\Delta x^2)> 1$ in what follows.
Using the definition of $M$, this last condition reduces to finding the positive root of a second-order polynomial equation, leading to \cref{cnd:L2stbltyHHEord1},
which concludes the proof.
\end{proof}
\begin{remark}
  The first noticeable point here is that there is no $\Delta t_{min}$ for the $l^{2}$-stability. The second is that even if the two maximum time steps \cref{cnd:LInftyStbltyHHEord1} and \cref{cnd:L2stbltyHHEord1} are equivalent in the regime $\varepsilon\ll \Delta x$, they differ by a factor $\sqrt{2}$ in the regime $\Delta x\ll \varepsilon$. This is due to the fact the inequality $||A_2||\leq 1$ saturates only in the regime $\varepsilon\ll\Delta x$ and is far from optimal in the regime $\Delta x \ll \varepsilon$. The actual limit on the time step $\Delta t_{max}$ for the $l^{2}$-stability is likely to be the same as that of $l^{\infty}$-stability.
\end{remark}

\subsubsection{Accuracy}
Having shown stability, we can demonstrate that accuracy is maintained throughout every regime. It is called \emph{uniformly asymptotic preserving}.
\begin{theorem}\label{thm:accuracyHHEord1}
  Suppose that \cref{cnd:L2stbltyHHEord1} holds. Then, the ImEx1-ctr scheme \cref{eq:AP1_ctr_HHE} provides $c_j^n(E) = \mathcal{O}(\Delta x)$ and $c_j^n(F)=\mathcal{O}(\Delta x)$ independently on $\varepsilon$.
\end{theorem}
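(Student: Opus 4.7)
The strategy is to substitute a smooth exact solution of \cref{eq:HHE} into the scheme \cref{eq:AP1_ctr_HHE}, Taylor expand every difference around $(t^n,x_j)$, and then exploit the HHE relations to cancel every contribution that would otherwise blow up as $\varepsilon\to 0$. All singular-in-$\varepsilon$ prefactors come multiplied by the implicit factor $M=\varepsilon^{2}/(\varepsilon^{2}+\sigma\Delta t)$, and the point of the proof is that this $M$ together with the PDE structure makes every term in the expansion either $O(\Delta t)$ or $O(\Delta x^{2})$ uniformly in $\varepsilon$.

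Concretely, for the first equation the expansions of the centered fluxes and of the time difference yield
\[
c^{n}_{j}(E) = \partial_t E + \frac{M}{\varepsilon}\partial_x F - \frac{M\Delta t}{\varepsilon^{2}}\partial_{xx} E + \frac{\Delta t}{2}\partial_{tt}E + \frac{M\Delta x^{2}}{6\varepsilon}\partial_{xxx}F - \frac{M\Delta t\Delta x^{2}}{12\varepsilon^{2}}\partial_{xxxx}E + O(\Delta t^{2}).
\]
Using $\partial_t E=-\partial_x F/\varepsilon$ from \cref{eq:HHE_E}, the nominally $O(1)$ piece becomes $(M-1)\partial_x F/\varepsilon=-(\sigma\Delta t M/\varepsilon^{3})\partial_x F$, which combines with $-(M\Delta t/\varepsilon^{2})\partial_{xx}E$ into $-(M\Delta t/\varepsilon^{2})[\,\sigma\partial_x F/\varepsilon+\partial_{xx}E\,]$. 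Differentiating \cref{eq:HHE_F} in $x$ and multiplying by $\varepsilon$ gives the key identity $\sigma\partial_x F/\varepsilon+\partial_{xx}E=-\varepsilon\,\partial_{xt}F$, and a second application of \cref{eq:HHE_E}, namely $\partial_{xt}F=-\varepsilon\,\partial_{tt}E$, collapses the whole bracket to $-\varepsilon^{2}\partial_{tt}E$. The three leading singular contributions thus reduce to $-M\Delta t\,\partial_{tt}E$, which pairs cleanly with the $(\Delta t/2)\partial_{tt}E$ coming from the time expansion to give $(1/2-M)\Delta t\,\partial_{tt}E$. Since $0\le M\le 1$ and $\Delta t\le\Delta t_{\max}=O(\Delta x)$ under \cref{cnd:L2stbltyHHEord1}, this is $O(\Delta x)$ uniformly in $\varepsilon$.

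The remaining truncation terms are handled in the same spirit: $\partial_{xxx}F=-\varepsilon\,\partial_{xxt}E$ turns the $(M/\varepsilon)(\Delta x^{2}/6)\partial_{xxx}F$ piece into $-(M\Delta x^{2}/6)\partial_{xxt}E=O(\Delta x^{2})$, while the biharmonic term is bounded directly using $M\Delta t/\varepsilon^{2}=\Delta t/(\varepsilon^{2}+\sigma\Delta t)\le 1/\sigma$. The computation for $c^{n}_{j}(F)$ is entirely symmetric: the leading residual $\partial_t F+(M/\varepsilon)\partial_x E+(M\sigma/\varepsilon^{2})F$ factors as $(M-1)[\partial_x E/\varepsilon+\sigma F/\varepsilon^{2}]=(1-M)\partial_t F=(\sigma\Delta t M/\varepsilon^{2})\partial_t F$; the telegraph identity $\varepsilon^{2}\partial_{tt}F+\sigma\partial_t F=\partial_{xx}F$ (obtained by differentiating \cref{eq:HHE_F} in $t$ and injecting \cref{eq:HHE_E}) then combines $(\sigma\Delta t M/\varepsilon^{2})\partial_t F$ with $-(M\Delta t/\varepsilon^{2})\partial_{xx}F$ into $-M\Delta t\,\partial_{tt}F$, yielding the analogous leading term $(1/2-M)\Delta t\,\partial_{tt}F$, and the Taylor remainders are handled exactly as before.

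The main obstacle is purely algebraic bookkeeping: one must verify that, at every order, every residual prefactor belongs to the short list of uniformly bounded dimensionless quantities $\{M,\; M\Delta t/\varepsilon^{2},\; \Delta t/\Delta t_{\max}\}$, while the odd powers $\varepsilon^{-(2k+1)}$ that spuriously appear in the expansion are always produced by a spatial derivative of a flux and therefore removable by a PDE identity of the form $\partial_x^{\ell}F=-\varepsilon\,\partial_t\partial_x^{\ell-1}E$. Once this reduction is carried out, the announced bound $c^{n}_{j}(E),c^{n}_{j}(F)=O(\Delta x)$ follows, with constants depending only on $\sigma$ and on bounded derivatives of the exact solution, but not on $\varepsilon$.
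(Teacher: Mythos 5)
Your overall strategy --- Taylor expansion of the consistency error followed by the PDE identities $\partial_x F=-\varepsilon\,\partial_t E$, $\partial_{xx}E+\sigma\partial_x F/\varepsilon=-\varepsilon\,\partial_{tx}F$ and the telegraph identity, together with the uniform bounds $M\le 1$ and $M\Delta t/\varepsilon^{2}\le 1/\sigma$ --- is essentially the paper's. Your treatment of the leading singular terms in both equations is correct and equivalent to the paper's (which phrases the same cancellation as $\partial_t E^{n}_j-\partial_t E^{n+1}_j=O(\Delta t)$ rather than $-M\Delta t\,\partial_{tt}E$); note only that the bracket $\sigma\partial_xF/\varepsilon+\partial_{xx}E$ equals $+\varepsilon^{2}\partial_{tt}E$, not $-\varepsilon^{2}\partial_{tt}E$, though your final $-M\Delta t\,\partial_{tt}E$ and the order estimate are unaffected.

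The genuine gap is the sentence ``the Taylor remainders are handled exactly as before'' for $c^{n}_{j}(F)$, and more generally your closing principle that every odd power $\varepsilon^{-(2k+1)}$ sits in front of a flux derivative removable via $\partial_x^{\ell}F=-\varepsilon\,\partial_t\partial_x^{\ell-1}E$. The flux in the $F$-equation is $E$, and the corresponding spatial remainder is $\tfrac{M\Delta x^{2}}{6\varepsilon}\partial_{xxx}E$ with $\partial_{xxx}E=O(1)$: the analogous identity reads $\partial_x^{\ell}E=-\varepsilon\,\partial_t\partial_x^{\ell-1}F-(\sigma/\varepsilon)\partial_x^{\ell-1}F$, whose second term is $O(1)$, not $O(\varepsilon)$, so nothing collapses this term the way $\partial_{xxx}F=O(\varepsilon)$ collapsed the one in the $E$-equation. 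Nor is the prefactor harmless: $M\Delta x^{2}/\varepsilon=\varepsilon\Delta x^{2}/(\varepsilon^{2}+\sigma\Delta t)$ is not in your list $\{M,\;M\Delta t/\varepsilon^{2},\;\Delta t/\Delta t_{\max}\}$ and is not uniformly $O(\Delta x)$ (take $\varepsilon=\Delta x^{2}$ and $\Delta t\to 0$: it tends to $1$). The paper devotes a separate regime-by-regime argument to precisely this term ($\varepsilon\ll\Delta x$, $\varepsilon\approx\Delta x$, $\Delta x\ll\varepsilon$), using $\Delta t$ comparable to $\Delta t_{\max}$ so that $M\approx\varepsilon^{2}/\Delta t$ and $M\Delta x^{2}/\varepsilon\approx\varepsilon\ll\Delta x$ in the parabolic regime. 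You need to supply such an argument (or an explicit lower bound on $\Delta t$); as written this term is unaccounted for and your general mechanism does not cover it.
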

\begin{proof}
  We make all necessary assumptions on the regularity of the solution $(E,F)$ to~\cref{eq:HHE} in this proof, and by an abuse of notation, we use the generic notation $w_j^n = w(t^n,x_j)$.   
  The consistency error on $E$ reads:
  \begin{align}
    c^{n}_{j}(E)
    =& \partial_{t} E^{n}_{j}+\frac{M}{\varepsilon} \partial_x F^{n}_{j} - \frac{M \Delta t}{\varepsilon^{2}}\partial_{xx} E^{n}_{j}\nonumber\\
    &+ \mathcal{O}\left( \Delta t \|\partial_{tt} E\|_\infty \right)+\mathcal{O}\left( \frac{M\Delta x^{2}}{\varepsilon} \|\partial_{xxx} F\|_\infty \right)+\mathcal{O}\left( \frac{M \Delta t \Delta x^{2}}{\varepsilon^{2}}\|\partial_{xxxx} E\|_\infty \right). \label{eq:cerrSpace_AP1_ctr_E}
  \end{align}
  By hypothesis, $\|\partial_{xxx} F\|_\infty = \varepsilon\| \partial_{txx} E\| = \mathcal{O}\left( \varepsilon \right)$ and $\Delta t = \mathcal{O}\left( \Delta x \right)$ and consequently the first two terms in \Cref{eq:cerrSpace_AP1_ctr_E} are $\mathcal{O}(\Delta x)$. Furthermore, $M\Delta t/\varepsilon^{2}\leq 1/\sigma$ and consequently the last term is $\mathcal{O}(\Delta x^2)$. Factorizing by $M$ and using~\cref{eq:HHE_F} provides: 
  \begin{align*}
    c^{n}_{j}(E) \underset{\mathcal{O}\left( \Delta x \right)}{=} & M\left( \partial_t E^{n}_{j} + \frac{1}{\varepsilon}\left( \partial_{x} F^{n}_{j} -\frac{\Delta t}{\varepsilon} \partial_{xx} E^{n}_{j} - \frac{\sigma\Delta t}{\varepsilon}\partial_{x} F^{n}_{j} \right)  \right)\\
    \underset{\mathcal{O}\left( \Delta x \right)}{=}& M\left( \partial_{t} E^{n}_{j} - \partial_{t} E^{n+1}_{j} +\mathcal{O}\left( \frac{\Delta t^{2}}{\varepsilon} \right) \right)\underset{\mathcal{O}\left( \Delta x \right)}{=}\mathcal{O}\left( M\Delta t + \frac{M\Delta t^{2}}{\varepsilon} \right).
  \end{align*}
  where $\underset{\mathcal{O}\left( \Delta x \right)}{=}$ denotes that we omited the terms that have already been shown to be uniformly of order $\mathcal{O}\left( \Delta x \right)$ in all regimes.
  Using analogous argument, we have $M\Delta t^{2}/\varepsilon = \mathcal{O}\left( \varepsilon\Delta t/\sigma \right)$ so that the first order accuracy is achieved in all regimes.

  The consistency on $F$ reads:
  \begin{align}
    c^{n}_{j}(F)
    =&\partial_{t} F^{n}_{j} + \frac{M}{\varepsilon} \partial_{x} E^{n}_{j} -\frac{M \Delta t}{\varepsilon^{2}} \partial_{xx} F^{n}_{j} + M\frac{\sigma}{\varepsilon^2}F^n_j\nonumber\\
    &+ \mathcal{O}\left( \Delta t \| \partial_{tt} F\|_\infty \right)+\mathcal{O}\left( \frac{M \Delta x^{2}}{\varepsilon} \|\partial_{xxx} E\|_\infty \right)+\mathcal{O}\left( \frac{M\Delta t \Delta x^{2}}{\varepsilon^{2}} \|\partial_{xxxx}F\|_\infty \right).\label{eq:cerrSpace_AP1_ctr_F}
  \end{align}
  The first and last terms of \Cref{eq:cerrSpace_AP1_ctr_F} can been dealt with using the same approach as before. Regarding the middle term, we consider each regime: If $\varepsilon \ll \Delta x$, then $\Delta t \approx \Delta x^{2}$ and $M \approx \varepsilon^{2}/\Delta t$ which leads to $M\Delta x^{2}/\varepsilon \approx \varepsilon \ll \Delta x$. If $\varepsilon \approx \Delta x$, then $M \Delta x^{2}/\varepsilon \leq \Delta x^{2}/\varepsilon \approx \Delta x$. If $\varepsilon \gg \Delta x$, then $\Delta x \leq \varepsilon $ and consequently $M \Delta x^{2}/\varepsilon \leq \Delta x$. Then the consistency error reads:
  \begin{align*}
    c^{n}_{j}(F) \underset{\mathcal{O}\left( \Delta x \right)}{=} &M\left( \partial_{t} F^{n}_{j}+\frac{1}{\varepsilon}\left( \partial_{x} E^{n}_{j} -\frac{\Delta t}{\varepsilon}\partial_{xx} F^{n}_{j} \right)+\frac{\sigma}{\varepsilon^{2}} \left( F^{n}_{j}+\Delta t\partial_{t}F^{n}_{j} \right) \right)\\
    \underset{\mathcal{O}\left( \Delta x \right)}{=} & M\left( \partial_{t} F^{n}_{j} + \frac{1}{\varepsilon}\partial_{x} E^{n+1}_{j}+\frac{\sigma}{\varepsilon^{2}}F^{n+1}_{j} +\mathcal{O}\left( \frac{\Delta t^{2}}{\varepsilon} +\frac{\sigma\Delta t^{2}}{\varepsilon^{2}}\right) \right)\\
    \underset{\mathcal{O}\left( \Delta x \right)}{=} &\mathcal{O}\left( M \Delta t+ \frac{M\Delta t^{2}}{\varepsilon} +\frac{\sigma M\Delta t^{2}}{\varepsilon^{2}}\right).
  \end{align*}
  Using that $\sigma M\Delta t^{2}/\varepsilon^{2} \leq \Delta t$, we retrieve order one accuracy in all regimes.
\end{proof}
\begin{remark}
  In this proof, even though second order centered discretization of the fluxes have been used, the error term $\mathcal{O}\left( M\Delta x^{2}/\varepsilon \right)$ degenerates to order 1 in the regime $\Delta x \approx \varepsilon$. This is not an issue since we only aim at first order accuracy. This aspect will be dealt with in \Cref{sec:HHEord2} for the second order scheme.
\end{remark}

\subsection{Alternative fluxes} We have shown that scheme~\cref{eq:AP1_ctr_HHE} possesses  strong robustness property under conditions~\cref{cnd:LInftyStbltyHHEord1}; however, the presence of a lower bound $\Delta t_{min}$ to ensure that shock do not trigger spurious oscillations can be seen as too restrictive. It is possible to remove this minimal time step condition by choosing more diffusive approximations for the fluxes.
We suggest the following:
\begin{subequations}\label{eq:AP1_itr_HHE}
  \begin{align}
   \left[ \partial_x F \right]^{n+1}_{j} = F_{j+\frac{1}{2}}^{n+1} - F_{j-\frac{1}{2}}^{n+1}, \qquad F_{j+\frac{1}{2}}^{n+1} = \frac{1}{2}\left(F_j^{n+1} + F_j^{n+1} + \lambda_{j+\frac{1}{2}} (E_{j+1}^{n} - E_{j}^{n}) \right), \label{eq:flux_AP1_itr_HHE_F} \\
   \left[ \partial_x E \right]^{n+1}_{j} = E_{j+\frac{1}{2}}^{n+1} - E_{j-\frac{1}{2}}^{n+1}, \qquad E_{j+\frac{1}{2}}^{n+1} = \frac{1}{2}\left(E_j^{n+1} + E_j^{n+1} + \lambda_{j+\frac{1}{2}} (F_{j+1}^{n} - F_{j}^{n}) \right), \label{eq:flux_AP1_itr_HHE_E}
  \end{align}
where $\lambda_{j+1/2} = \max\left( \frac{|F^{n}_{j+1}|}{E^{n}_{j+1}},\frac{|F^{n}_{j}|}{E^{n}_{j}} \right)$. This eventually leads to:
  \begin{align}
    &\frac{E^{n+1}_j - E^n_j}{\Delta t}+\frac{M}{\varepsilon}\frac{F^n_{j+1}-F^n_{j-1}}{2\Delta x}-\frac{M\Delta t}{\varepsilon^2}\frac{E^n_{j+1}-2E^n_{j}+E^n_{j-1}}{\Delta x^2}\\ 
    &\qquad\qquad-\frac{M\Delta x}{2\varepsilon}\frac{\lambda_{j+1/2}\left( E^n_{j+1}-E^n_{j} \right)-\lambda_{j-1/2}\left( E^n_{j} -E^n_{j-1} \right)}{\Delta x^2}=0,\nonumber\\
    &\frac{F^{n+1}_j - F^n_j}{\Delta t}+\frac{M}{\varepsilon}\frac{E^n_{j+1}-E^n_{j-1}}{2\Delta x}-\frac{M\Delta t}{\varepsilon^2}\frac{F^n_{j+1}-2F^n_{j}+F^n_{j-1}}{\Delta x^2}\\
    &\qquad\qquad-\frac{M\Delta x}{2\varepsilon}\frac{\lambda_{j+1/2}\left( F^n_{j+1}-F^n_{j} \right)-\lambda_{j-1/2}\left( F^n_{j} -F^n_{j-1} \right)}{\Delta x^2} =-M\frac{\sigma}{\varepsilon^2}F^n_j.\nonumber
  \end{align}
\end{subequations}
The diffusion term $\frac{\lambda_{j+1/2}M\Delta x}{2}\partial_{xx}E$ is now controlled in all regimes since $\lambda_{j+1/2}$ scales as $|F|=\mathcal{O}\left( \varepsilon \right)$. 
Furthermore, as $F\sim \varepsilon \partial_x E /\sigma$, the factor $\lambda_{j+1/2}$ vanishes where the variation of $E$ are small. In such areas, the scheme \cref{eq:AP1_itr_HHE} effectively reduces to the ImEx1-ctr scheme \cref{eq:AP1_ctr_HHE}. On the contrary in the vicinity of discontinuities this scheme reduces to an upwind-like scheme (see \cref{eq:AP1_dctr_HHE}). Rigorous proofs of these assertions are significantly more involved than what has been done previously and consequently they are only verified numerically.


\section{Second-order AP-schemes}\label{sec:HHEord2}

In line with the introduced formalism, the second order scheme has a general structure of the form:
\begin{subequations}\label{eq:StructAP2HHE}
  \begin{align}
    \frac{E^{n+1}_{j}-E^{n}_{j}}{\Delta t}+\frac{1}{\varepsilon}\left[ \partial_x F \right]^{n+1/2}_{j}&=0, \label{eq:StructAP2HHE_E}\\
    \frac{F^{n+1}_{j}-F^{n}_{j}}{\Delta t}+\frac{1}{\varepsilon}\left[ \partial_x E \right]^{n+1/2}_{j}&=-\frac{\sigma}{\varepsilon^{2}}F^{n+1/2}_{j}. \label{eq:StructAP2HHE_F}
  \end{align} 
\end{subequations}

\subsection{Principle}
A naive approach would be to set:
\begin{subequations}\label{eq:DescreteXderHHEnp1half}
  \begin{align}
    \frac{\left[ \partial_{x} F \right]^{n+1/2}_{j}-\left[ \partial_{x} F \right]^{n}_{j}}{\Delta t/2} &=-\frac{1}{\varepsilon}\left[ \partial_{xx} E \right]^{n}_{j}-\frac{\sigma}{\varepsilon^{2}}\left[ \partial_{x} F \right]^{n+1/2}_{j}, \label{eq:DescreteXderHHEnp1half_dxF}\\
    \frac{\left[ \partial_{x} E \right]^{n+1/2}_{j}-\left[ \partial_{x} E \right]^{n}_{j}}{\Delta t/2} &= -\frac{1}{\varepsilon} \left[ \partial_{xx} F \right]^{n}_{j}, \label{eq:DescreteXderHHEnp1half_dxE}\\
    F^{n+1/2}_{j}&=\frac{F^{n+1}_{j}+F^{n}_{j}}{2}, \label{eq:DescreteXderHHEnp1half_F}
  \end{align}
\end{subequations}
leading to the scheme:
\begin{subequations}\label{eq:AP2naive_HHE_fullStruct}
  \begin{align}
    \frac{E^{n+1}_{j}-E^{n}_{j}}{\Delta t}+\frac{M_{1/2}}{\varepsilon}\left[ \partial_x F\right]^{n}_{j}-\frac{M_{1/2}\Delta t}{\varepsilon}\left[\partial_{xx} E \right]^{n}_{j}&=0, \\
    \frac{F^{n+1}_{j}-F^{n}_{j}}{\Delta t}+\frac{M_{1/2}}{\varepsilon}\left[\partial_x E \right]^{n}_{j}-\frac{M\Delta t}{\varepsilon}\left[\partial_{xx} F \right]^{n}_{j} &= -\frac{\sigma M_{1/2}}{\varepsilon^{2}}F^{n}_{j},
  \end{align}
\end{subequations}
with $M_{1/2}=1/(1+ \sigma\Delta t/(2\varepsilon^{2}))$. Unfortunately, scheme \cref{eq:AP2naive_HHE_fullStruct} can be shown to have stability constraints in either $l^{2}$ or $l^{\infty}$ norms that scale as $\Delta t_{max}=\mathcal{O}\left( \varepsilon \Delta t \right)$. The idea is that our current choice of source term \cref{eq:DescreteXderHHEnp1half_F} is not ``implicit enough" and we have to shift it somehow a little bit more toward time $t^{n+1}$. We introduce a Reverse Runge-Kutta methodology\footnote{Called {\sl reflected} in \cite{Butcher2016} or {\sl adjoint} in \cite{Hairer1993}, it is investigated in \cite{Kalitkin14,Skvortsov17} (See \Cref{supsec:RRK}).} in order to treat the source term ``as implicitly as possible". A brief literature review and synthetic presentation of this approach for second order can be found in \Cref{supsec:RRK}. Going for second order with nice A-stability and L-stability properties, it boils down to reach time $t^{n+1/2}$ in \cref{eq:DescreteXderHHEnp1half_F} from time $t^{n+1}$:
\begin{align}
  F^{n+1/2}_j &=F^{n+1}_j-\frac{\Delta t}{2}\left[\partial_t F\right]^{n+1}_j\nonumber\\
  &=F^{n+1}_j+\frac{\Delta t}{2\varepsilon}\left(\left[\partial_{x}E\right]^{n+1}_j+\frac{\sigma}{\varepsilon}F^{n+1}_j\right)\nonumber\\
  &=\left(1+\frac{\sigma\Delta t}{2\varepsilon^2}\right)F^{n+1}_j+\frac{\Delta t}{2\varepsilon}\label{eq:BackwardStepF}
  \left[\partial_{x}E\right]^{n}_j-\frac{\Delta t^2}{2\varepsilon^2}\left[\partial_{xx} F\right]^n_j. 
\end{align}
To the knowledge of the authors, this step differs essentially from other classes of ImEx schemes found in the literature 
\cite{albi2020implicit,boscarino2013implicit}.
It is important for stability reasons that a similar backward step be used for the fluxes, starting from $t^{n+1/2}$ and coming back to $t^{n}$ because the term $\left[\partial_{xx}E\right]^n_j$ cannot be brought to $t^{n+1/2}$ without using a term of the form $\left[\partial_{xxx}F\right]^n_j$, which would increase the stencil:
\begin{align}
  \frac{\left[\partial_x F\right]^{n+1/2}_j-\left[\partial_x F\right]^{n}_j}{\Delta t/2}&=-\frac{1}{\varepsilon}\left[\partial_{xx}E\right]^n_j-\frac{\sigma}{\varepsilon^2}\left(\left[\partial_x F\right]^{n+1/2}_j-\frac{\Delta t}{2}\left[\partial_{tx} F\right]^{n+1/2}_j\right)\nonumber\\
  =-\frac{1}{\varepsilon}&\left[\partial_{xx}E\right]^n_j-\frac{\sigma}{\varepsilon^2}\left(\left[\partial_x F\right]^{n+1/2}_j+\frac{\Delta t}{2\varepsilon}\left(\left[\partial_{xx} E\right]^n_j+\frac{\sigma}{\varepsilon}\left[\partial_x F\right]^{n+1/2}_j\right)\right)\nonumber\\
  =-\frac{1}{\varepsilon}&\left(1+\frac{\sigma \Delta t}{2\varepsilon^2}\right)\left[\partial_{xx} E\right]^n_j-\frac{\sigma }{\varepsilon^2}\left(1+\frac{\sigma\Delta t}{2\varepsilon^2}\right)\left[\partial_x F\right]^{n+1/2}_j. \label{eq:BackwardStep_dxF}
\end{align}
For $\left[ \partial_{x} E \right]^{n+1/2}_{j}$, we keep the formula \cref{eq:DescreteXderHHEnp1half_dxE}. Regarding the choice of discrete operator for the fluxes $\left[ \partial_{x} E \right]^{n}_{j}$, $\left[ \partial_{x} F \right]^{n}_{j}$ we have shown in \Cref{sec:HHEord1} that the centered discretization is a valid choice and we use it as a reference in this section.
Using these different elements we obtain the ImEx2-ctr second-order centered scheme:
\begin{subequations}\label{eq:AP2_ctr_HHE}
  \begin{alignat}{2}
    &\frac{E^{n+1}_j-E^n_j}{\Delta t}+\frac{M_1}{\varepsilon}\frac{F^n_{j+1}-F^n_{j-1}}{2\Delta x}-\frac{M_1^+\Delta t}{2\varepsilon^2}\frac{E^n_{j+1}-2E^n_j+E^n_{j-1}}{\Delta x^2}=0, \label{eq:AP2_ctr_HHE_E}\\
    &\frac{F^{n+1}_{j}-F^n_j}{\Delta t}+\frac{M_2}{\varepsilon}\frac{E^n_{j+1}-E^n_{j-1}}{2\Delta x}-\frac{M_2^+\Delta t}{2\varepsilon^2}\frac{F^n_{j+1}-2F^n_j+F^n_{j-1}}{\Delta x^2}=-\frac{\sigma M_2}{\varepsilon^2}\tilde{F}^{n}_{j}, \label{eq:AP2_ctr_HHE_F} \\
    M_1=&\frac{1}{1+\frac{\sigma \Delta t}{2\varepsilon^2}\left(1+\frac{\sigma \Delta t}{2\varepsilon^2}\right)}, \ 
    M_1^+=\frac{1+\frac{\sigma\Delta t}{2\varepsilon^2}}{1+\frac{\sigma\Delta t}{2\varepsilon^2}\left(1+\frac{\sigma \Delta t}{2\varepsilon^2}\right)}, \nonumber \\
    M_2=&\frac{1+\frac{\sigma \Delta t}{2\varepsilon^2}}{1+\frac{\sigma \Delta t}{\varepsilon^2}\left(1+\frac{\sigma \Delta t}{2\varepsilon^2}\right)}, \  
    M_2^+=\frac{1+\frac{\sigma\Delta t}{\varepsilon^2}}{1+\frac{\sigma \Delta t}{\varepsilon^2}\left(1+\frac{\sigma \Delta t}{2\varepsilon^2}\right)}, \
    \tilde{F}^{n}_{j}=\frac{F^n_{j+1}+4F^n_j+F^n_{j-1}}{6}. \nonumber 
  \end{alignat}
\end{subequations}
The choice of $\tilde{F}^{n}_{j}$ will be justified during the accuracy study. Meanwhile, we can see that factors $M_{1}$, $M_{1}^{+}$, $M_{2}$ and $M_{2}^{+}$ are essentially perturbations of factor $M$, allowing for a finer control. We emphasize that the second order method has the same overall structure as the first order one. In particular it is more compact in space than other ImEx approaches based on classical RK or LM methods \cite{albi2020implicit,Boscarino-Russo13}. We now show that it is endowed with good stability and accuracy properties in all regimes.
\subsection{$l^\infty$-stability}
\label{sbsec:HHEord2_stability}
The ImEx2-ctr scheme \cref{eq:AP2_ctr_HHE} is no longer symmetric in $E$ and $F$, we introduce the variables $\tilde{u}=\sqrt{M_2}E+\sqrt{M_1}F$ and $\tilde{v}=\sqrt{M_2}E-\sqrt{M_1}F$ that diagonalize the hyperbolic part of the scheme. We can see that in regimes where $\Delta t \ll \varepsilon$, we have $
\tilde{u}\to u$ and $\tilde{v}\to v$.
\begin{theorem}\label{thm:LinftyStabAP2ctr}
The ImEx2-ctr scheme \cref{eq:AP2_ctr_HHE} together with periodic \cref{bc:periodic} or hybrid \cref{bc:hybrid} boundary conditions is $l^{\infty}$-diminishing for the variables $\tilde{u},\tilde{v}$ provided that:
  \begin{equation}\label{cnd:LInftyStbltyHHE_AP2ctr}
    \Delta t_{min}:= \varepsilon \Delta x+\frac{\sigma\Delta x}{6} \leq \Delta t \leq \Delta t_{max}:= \frac{\sigma\Delta x^{2}}{9/2}\frac{1+\sqrt{1+\frac{9}{2}\left( \frac{2\varepsilon}{\sigma\Delta x} \right)^{2}}}{2}.
  \end{equation}
\end{theorem}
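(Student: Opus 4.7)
The plan is to imitate closely the proof of \Cref{thm:LinftyStabAP1ctr}, using the change of variables $\tilde{u}=\sqrt{M_2}E+\sqrt{M_1}F$, $\tilde{v}=\sqrt{M_2}E-\sqrt{M_1}F$ (which play the role of the diagonalizing variables $u,v$ used at first order). First, I would combine \cref{eq:AP2_ctr_HHE_E} multiplied by $\sqrt{M_2}$ with \cref{eq:AP2_ctr_HHE_F} multiplied by $\sqrt{M_1}$: the identities $\sqrt{M_2}\,M_1(F^n_{j+1}-F^n_{j-1}) = \sqrt{M_1 M_2}\cdot\sqrt{M_1}(F^n_{j+1}-F^n_{j-1})$ and its counterpart for the $E$-equation ensure that after summation the advective contributions combine exactly into a factor $\sqrt{M_1 M_2}$ acting on $\tilde u$, diagonalizing the hyperbolic block. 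The same manipulation with a minus sign yields an analogous update for $\tilde v^{n+1}_j$, in which only the sign of the hyperbolic term is flipped.

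Next, I would substitute $\sqrt{M_2}E = (\tilde u + \tilde v)/2$ and $\sqrt{M_1}F = (\tilde u - \tilde v)/2$ in the diffusive Laplacian and in the Simpson-weighted source $\tilde F^n_j$. This expresses
\begin{equation*}
\tilde u^{n+1}_j = \sum_{k=-1}^{1} a_k\,\tilde u^n_{j+k} + \sum_{k=-1}^{1} b_k\,\tilde v^n_{j+k},
\end{equation*}
with coefficients built out of $\sqrt{M_1 M_2}$, $M_1^+ \pm M_2^+$ and $M_2$, together with an analogous expression for $\tilde v^{n+1}_j$. Preservation of constant states immediately enforces $\sum_k (a_k+b_k) = 1$, giving the desired convex-combination structure as soon as all six coefficients are non-negative.

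The positivity analysis is where the actual work lies. The coefficients $a_{\pm 1}$ of $\tilde u^n_{j\pm 1}$ pit the signed hyperbolic contribution $\mp\sqrt{M_1 M_2}\Delta t/(2\varepsilon\Delta x)$ against the diffusive contribution $(M_1^+ + M_2^+)\Delta t^2/(4\varepsilon^2\Delta x^2)$, further shifted by a negative source contribution $-\sigma M_2\Delta t/(12\varepsilon^2)$ produced by $\tilde F^n_j$. Imposing $a_{-1}\geq 0$ (and symmetrically for the $\tilde v$-update) yields the lower bound $\Delta t\geq\Delta t_{min}$, in which the extra $\sigma\Delta x/6$ term compared to the first-order threshold $\varepsilon\Delta x/2$ is precisely what the Simpson weighting introduces. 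Imposing $a_0\geq 0$ reduces to a quadratic inequality in $\Delta t$, whose positive root is exactly the $\Delta t_{max}$ stated in \cref{cnd:LInftyStbltyHHE_AP2ctr}. For the off-diagonal coefficients $b_k$, I would exploit the identity, setting $x=\sigma\Delta t/(2\varepsilon^2)$,
\begin{equation*}
M_1^+ - M_2^+ = \frac{x^2}{(1+x+x^2)(1+2x+2x^2)}\geq 0,
\end{equation*}
which makes $b_{\pm 1}\geq 0$ automatic, while $b_0\geq 0$ follows from the range of $\Delta t$ fixed above since the negative diffusive cross-coupling is of order $x^2$ and is dominated by the positive source contribution of order $x\,M_2$.

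The main obstacle is the tight algebraic interplay between the three distinct modifications $M_1^+, M_2^+, M_2$: since the change of variables diagonalizes only the hyperbolic block, the diffusive and source terms necessarily generate cross-couplings $b_k$ that must remain non-negative simultaneously with the $a_k$'s. Once the convex-combination property is obtained, the discrete maximum principle propagates to the boundary exactly as in \Cref{thm:LinftyStabAP1ctr}: periodic conditions \cref{bc:periodic} transfer the bound to the ghost cells by construction, while the hybrid conditions \cref{bc:hybrid}, thanks to the $\sqrt{M_1},\sqrt{M_2}$ rescaling in the definition of $\tilde u,\tilde v$, yield reflections that preserve $\max(|\tilde u|,|\tilde v|)$ and conclude the proof.
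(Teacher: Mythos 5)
Your proposal follows essentially the same route as the paper's proof: the same change of variables $\tilde u,\tilde v$, the same reduction to a convex combination whose coefficient signs yield exactly the three conditions the paper lists (diagonal coefficient giving $\Delta t_{max}$, off-center $\tilde u$-coefficient giving $\Delta t_{min}$, and the cross-coupling coefficients handled via $M_1^+\geq M_2^+$), followed by the same propagation through the boundary conditions. Your explicit identity $M_1^+-M_2^+=x^2/((1+x+x^2)(1+2x+2x^2))$ is a nice sharpening of the paper's observation that $M_-^+>0$; the only detail you gloss over is that obtaining the stated quadratic for $\Delta t_{max}$ requires the intermediate bounds $M_2\sigma\Delta t/(3\varepsilon^2)\le 1/3$ and $M_+^+\le 3/(1+\sigma\Delta t/\varepsilon^2)$, which the paper derives from \cref{ineq:M1p,ineq:M2p}.
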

\begin{proof}
  The proof is similar to that of \cref{thm:LinftyStabAP1ctr}. Transposing equations \cref{eq:AP2_ctr_HHE} to the variables $\hat{u}$ and $\hat{v}$ and reordering the terms in the manner of \Cref{eq:reorderedAP1_ctr_HHE_uv} we obtain the stability conditions:
  \begin{subequations}
    \begin{align}
      1-\frac{M_+^+\Delta t^2}{\varepsilon^2\Delta x^2}-\frac{M_2\sigma \Delta t}{3\varepsilon^2}\geq0, \label{cnd:LInftyStbltyHHE_AP2ctr1}\\
      \frac{M_+^+\Delta t^2}{2\varepsilon^2\Delta x^2}-\frac{\Tilde{M}\Delta t}{2\varepsilon\Delta x}-\frac{M_2\sigma \Delta t}{2\times 6\varepsilon^2}\geq 0,\label{cnd:LInftyStbltyHHE_AP2ctr2}\\
      \frac{M_2\sigma \Delta t}{3\varepsilon^2}-\frac{M_-^+\Delta t^2}{\varepsilon^2\Delta x^2}\geq 0, \label{cnd:LInftyStbltyHHE_AP2ctr3}
    \end{align}
  \end{subequations}
  with $\tilde{M}=\sqrt{M_{1}M_{2}}$, $M^{+}_{\pm}=\left( M_{1}^{+}\pm M_{2}^{+} \right)/2>0$.
  If $M^+_+\Delta t^2/(\varepsilon^2\Delta x^2)\leq 2/3$ and $M_2\sigma\Delta t/(3\varepsilon^2)\leq 1/3$
  then the first condition \cref{cnd:LInftyStbltyHHE_AP2ctr1} holds. 
  By construction, the latter always holds.
  For the first, one computes:
  \begin{align}
    M_1^+&
    =\frac{1+\frac{\sigma\Delta t}{\varepsilon^2}}{\frac{3}{4}+\left(\frac{1}{2}+\frac{\sigma\Delta t}{2\varepsilon^2}\right)^2}
    \leq \frac{1+\frac{\sigma\Delta t}{\varepsilon^2}}{\left(\frac{1}{2}+\frac{\sigma\Delta t}{2\varepsilon^2}\right)^2}
    \leq \frac{4}{1+\frac{\sigma\Delta t}{\varepsilon^2}}, \label{ineq:M1p}\\
    M_2^+&
    =\frac{1+\frac{\sigma \Delta t}{\varepsilon^2}}{\frac{1}{2}+\left(\frac{1}{\sqrt{2}}+\frac{\sigma\Delta t}{\sqrt{2}\varepsilon^2}\right)^2}
    \leq \frac{1+\frac{\sigma \Delta t}{\varepsilon^2}}{\left(\frac{1}{\sqrt{2}}+\frac{\sigma\Delta t}{\sqrt{2}\varepsilon^2}\right)^2}\label{ineq:M2p}
    =
    \frac{2}{1+\frac{\sigma\Delta t}{\varepsilon^2}}.
\end{align}
  In the end we retrieve the bound $M^+_+=\frac{1}{2}\left(M_1^++M_2^+\right)\leq 3/(1+(\sigma \Delta t)/(\varepsilon^2))$. The study then reduces again to finding the root of a second-order polynomial equation and we find $\Delta t_{max}$ as defined in \cref{cnd:LInftyStbltyHHE_AP2ctr}.
  The second condition \cref{cnd:LInftyStbltyHHE_AP2ctr2} holds provided that:
  \begin{equation*}
    \Delta t\geq \Delta t_{min}:= \varepsilon \Delta x+\frac{\sigma\Delta x}{6}\geq \frac{\tilde{M}}{M^+_+}\varepsilon\Delta x+\frac{M_2}{M_+^+}\frac{\sigma\Delta x^2}{6},
  \end{equation*}
  since $\tilde{M}/M^{+}_{+}\leq 1$ and $M_{2}/M^{+}_{+}\leq 1$. Finally the last condition \cref{cnd:LInftyStbltyHHE_AP2ctr3} is equivalent to:
  \begin{equation*}
      \frac{3}{8}\frac{\frac{\sigma\Delta t}{\varepsilon^2}}{\left(1+\frac{\sigma\Delta t}{2\varepsilon^2}\right)\left(1+\frac{\sigma\Delta t}{2\varepsilon^2}\left(1+\frac{\sigma\Delta t}{2\varepsilon^2}\right)\right)}\frac{\Delta t^2}{\varepsilon^2 \Delta x^2} \leq 1.
  \end{equation*}
  This condition is verified if $\Delta t\leq \frac{2}{3}\sigma \Delta x^2$ or if $\Delta t \leq 2\sqrt{\frac{2}{3}}\varepsilon\Delta x$. One of theses two upper limits is always bigger than $\Delta t_{max}$ so it is not restrictive.
\end{proof}
\begin{remark}
  It is possible that the lower and upper bounds collide in the regimes $\Delta x \approx \varepsilon$ and $\Delta x \ll \varepsilon$. This is due to the fact that the proposed $\Delta t_{min}$ and $\Delta t_{max}$ are not actual solutions of equations associated with conditions \cref{cnd:LInftyStbltyHHE_AP2ctr1,cnd:LInftyStbltyHHE_AP2ctr2}. However numerically solving these equations, we have found that in practice we always obtain $\Delta t_{max}>\Delta t_{min}$ for the actual solutions of \cref{cnd:LInftyStbltyHHE_AP2ctr1,cnd:LInftyStbltyHHE_AP2ctr2}.
\end{remark}

\subsection{$l^2$-stability}
We also have a lower-bound-free result for the $l^{2}$-norm.
\begin{theorem}\label{thm:L2stbltyHHEord2}
The ImEx1-ctr scheme \cref{eq:AP1_ctr_HHE} together with periodic boundary conditions \cref{bc:periodic} is $l^{2}$-diminishing for variables $\tilde{u}$ and $\tilde{v}$ under the condition:
  \begin{equation}\label{cnd:L2stbltyHHEord2}
    \Delta t \leq \Delta t_{max} =\frac{\sigma \Delta x^2}{6}\frac{1+\sqrt{1+6\left(\frac{2\varepsilon}{\sigma\Delta x^2}\right)^2}}{2}.
  \end{equation}
\end{theorem}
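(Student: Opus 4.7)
My plan is to follow the Fourier / von Neumann strategy used in the proof of \Cref{thm:L2stbltyHHEord1}, but adapted to the non-symmetric second-order scheme. The first step is to pass from $(E,F)$ to the rescaled diagonalizing variables $\tilde{u}=\sqrt{M_{2}}E+\sqrt{M_{1}}F$ and $\tilde{v}=\sqrt{M_{2}}E-\sqrt{M_{1}}F$. These coordinates absorb the asymmetry introduced by the distinct coefficients $M_1, M_2, M_1^+, M_2^+$ in front of the $\partial_x$ and $\partial_{xx}$ terms in~\cref{eq:AP2_ctr_HHE}, and the equivalence $\|E^n\|_{l^2}^2+\|F^n\|_{l^2}^2 \sim \|\tilde{u}^n\|_{l^2}^2+\|\tilde{v}^n\|_{l^2}^2$ (with ratios controlled uniformly in $\varepsilon$ through $M_1,M_2$) means that $l^2$-diminishing for $(\tilde{u},\tilde{v})$ is what we need to establish.

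Next, since we work with periodic boundary conditions, I would plug the Fourier modes $\tilde{u}_j^n=\hat{\tilde{u}}\,e^{i(n\omega\Delta t-jk\Delta x)}$ and $\tilde{v}_j^n=\hat{\tilde{v}}\,e^{i(n\omega\Delta t-jk\Delta x)}$ into the rewritten scheme and extract a $2\times 2$ amplification matrix $A=A(k)$. The structure of~\cref{eq:AP2_ctr_HHE} is analogous to the first-order one, except that (i) the $\partial_{xx}$ diffusion weights are now $M_{1}^{+}\Delta t/(2\varepsilon^{2})$ and $M_{2}^{+}\Delta t/(2\varepsilon^{2})$, and (ii) the source term acts on the three-point average $\tilde{F}^{n}_{j}=(F^n_{j+1}+4F^n_j+F^n_{j-1})/6$, whose Fourier symbol is $(2+\cos(k\Delta x))/3\in[1/3,1]$. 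This motivates the same splitting $A=\tfrac12(A_1+A_2)$ as in the first-order proof, with $A_1$ holding the hyperbolic + diffusion pieces (diagonal in the $(\tilde{u},\tilde{v})$ basis) and $A_2$ the symmetric source piece.

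I would then estimate the two blocks separately. For $A_2$, self-adjointness gives $\|A_2\|=\rho(A_2)$, and using $0<\tilde F\text{-symbol}\le 1$ together with the algebraic inequalities $M_2\sigma\Delta t/\varepsilon^2\le 2$ (immediate from the definition of $M_2$) and the bounds~\cref{ineq:M1p}--\cref{ineq:M2p} already derived in the $l^\infty$ proof, the inequality $\|A_2\|\le 1$ will hold unconditionally. For $A_1$, diagonality gives $\|A_1\|^2=\max_\pm\bigl((1-\tilde a)^2+\tilde c^2\bigr)$ with $\tilde a$ proportional to $M_{+}^{+}\Delta t^2\sin^2(k\Delta x/2)/(\varepsilon^2\Delta x^2)$ and $\tilde c$ proportional to $\tilde M \Delta t \sin(k\Delta x)/(\varepsilon\Delta x)$, where $M_+^+=(M_1^++M_2^+)/2$ and $\tilde M=\sqrt{M_1M_2}$. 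Expanding $\sin(k\Delta x)=2\sin(k\Delta x/2)\cos(k\Delta x/2)$ and factoring $\sin^2(k\Delta x/2)$, the condition $\|A_1\|^2\le 1$ reduces to a one-variable inequality in $y=\sin^2(k\Delta x/2)\in[0,1]$ that is worst at $y=1$ whenever $4\Delta t^2/(\varepsilon^2\Delta x^2)>1$, and automatically satisfied otherwise.

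The main obstacle, and where the $\sigma\Delta x^2/6$ factor in~\cref{cnd:L2stbltyHHEord2} will be born, is controlling the $\cos(k\Delta x/2)^2$ cross-term coming from $\tilde c^2$ by the positive part of $1-\tilde a$ uniformly in $\varepsilon$; this boils down, using the bound $M_+^+\le 3/(1+\sigma\Delta t/\varepsilon^2)$ already proved in \Cref{thm:LinftyStabAP2ctr}, to a second-order polynomial inequality in $\Delta t$, whose positive root is exactly $\Delta t_{max}$ from~\cref{cnd:L2stbltyHHEord2}. I would then check consistency of the two limits: in the regime $\varepsilon\ll\Delta x$, $\Delta t_{max}\sim \sigma\Delta x^2/6$ (parabolic), while in the regime $\Delta x\ll\varepsilon$, $\Delta t_{max}\sim \varepsilon\Delta x/\sqrt{6}$ (hyperbolic), confirming the uniform AP character of the stability condition and the absence of a lower bound $\Delta t_{min}$ unlike in the $l^\infty$ analysis.
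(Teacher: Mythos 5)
Your proposal follows essentially the same route as the paper's proof: Fourier analysis in the variables $\tilde u,\tilde v$ under periodic boundary conditions, the splitting $A=\tfrac12\left(A_1+A_2\right)$ with $A_1$ diagonal, and the bound $M^+_+\le 3/(1+\sigma\Delta t/\varepsilon^2)$ reducing $\|A_1\|\le 1$ to the quadratic inequality $1+\sigma\Delta t/\varepsilon^2\ge 6\Delta t^2/(\varepsilon^2\Delta x^2)$, whose positive root is $\Delta t_{max}$. The only caveat is that $A_2$ also carries the antisymmetric diffusion contribution $M^+_-a$ in its off-diagonal entries, so $\|A_2\|\le 1$ is not literally unconditional but requires a (non-restrictive) verification analogous to condition \cref{cnd:LInftyStbltyHHE_AP2ctr3} in the $l^\infty$ analysis --- a step the paper's own proof also leaves implicit.
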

\begin{proof}
  As we use periodic boundary conditions, we study again the Fourier modes $\hat{\tilde{u}}^{n}_{j} = \hat{\tilde{u}}e^{i\left( n\omega \Delta t-jk\Delta x \right)}$ and $\hat{\tilde{v}}^{n}_{j} = \hat{\tilde{v}}e^{i\left( n\omega \Delta t-jk\Delta x \right)}$.
  The scheme writes $w^{n+1}_j=Aw^n_j$, with:
  \begin{gather*}
    w^n_j=
    \begin{pmatrix}
      \hat{\tilde{u}}^{n}_{j}\\
      \hat{\tilde{v}}^{n}_{j}
    \end{pmatrix},\quad
    A=
    \begin{pmatrix}
      1-M_+^+a-i\tilde{M}c-M_2b&M_2b-M_-^-a\\
      M_2b-M_-^-a&1-M_+^+a+i\tilde{M}c-M_2b
    \end{pmatrix}, \\
      a=\frac{2\Delta t^2}{\varepsilon^2\Delta x^2}\sin^2\left(\frac{k\Delta x}{2}\right), \quad
      c=\frac{\Delta t}{\varepsilon\Delta x}\sin\left(k\Delta x\right), \quad
      b=\frac{2+\cos\left(k\Delta x\right)}{3}\frac{\sigma\Delta t}{2\varepsilon^2}. 
  \end{gather*}
  We split the matrix $A$ into $A=\frac{1}{2}\left(A_1+A_2\right)$ where:
  \begin{align*}
      A_1&=
      \begin{pmatrix}
      1-2M^+_+a-2i\tilde{M}c & 0\\
      0&1-2M^+_+a+2i\tilde{M}c
      \end{pmatrix},\\
      A_2&=
      \begin{pmatrix}
      1-2M_2b & 2M_2b-2M^+_-a\\
      2M_2b-2M^+_-a & 1-2M_2b
      \end{pmatrix}.
  \end{align*}
  Because $A_1$ is a diagonal matrix we have $||A_1||^2=\left(1-4aM^+_+\right)^2+\left(4\tilde{M}\right)^2$. Thus, setting $M_+^\prime = 2M_1$ and $\tilde{M}^\prime = 2\tilde{M}^\prime$, we obtain:
  \begin{align*}
      ||A_1||^2
      &=1-M^\prime_+\frac{4\Delta t^2}{\varepsilon^2\Delta x^2}\sin^2\left(\frac{k\Delta x}{2}\right)\times\\
      &\left(1-M^\prime_+\left(\left(\frac{\tilde{M}^\prime}{M^\prime_+}\right)^2-\sin^2\left(\frac{k\Delta x}{2}\right)\left(\left(\frac{\tilde{M}^\prime}{M^\prime_+}\right)^2-\frac{\Delta t^2}{\varepsilon^2\Delta x^2}\right)\right)\right).
  \end{align*}
  By construction, we have  $\tilde{M}^\prime\leq M^\prime_+$. We want the following condition to hold:
  \begin{equation*}
      1-M^\prime_+\left(\left(\frac{\tilde{M}^\prime}{M^\prime_+}\right)^2-\sin^2\left(\frac{k\Delta x}{2}\right)\left(\left(\frac{\tilde{M}^\prime}{M^\prime_+}\right)^2-\frac{\Delta t^2}{\varepsilon^2\Delta x^2}\right)\right)\geq 0. 
  \end{equation*}
  When $\frac{\Delta t^2}{\varepsilon^2\Delta x^2}\leq\frac{\tilde{M}^\prime}{M^\prime_+}\leq 1$ this condition is verified. Now we consider the case where $\frac{\Delta t^2}{\varepsilon^2\Delta x^2}\geq\frac{\tilde{M}^\prime}{M^\prime_+}$. Then the condition holds if:
  \begin{equation*}
      1+\frac{\sigma\Delta t}{\varepsilon^2}\geq 6\left(\left(\frac{\tilde{M}^\prime}{M^\prime_+}\right)^2-\left(\left(\frac{\tilde{M}^\prime}{M^\prime_+}\right)^2-\frac{\Delta t^2}{\varepsilon^2\Delta x^2}\right)\right)
  \end{equation*}
  where we have used the fact that $M_+^\prime = 2 M^+_+$ and the inequality $M^+_+\leq 3/(1+\sigma\Delta t/(\varepsilon^2))$. In the end this leads to
$1+\sigma\Delta t/(\varepsilon^2)\geq 6\Delta t^2/(\varepsilon^2\Delta x^2)$,
  which eventually leads to condition \cref{cnd:L2stbltyHHEord2}.
\end{proof}
\subsection{Accuracy}\label{sbsec:HHEord2_accuracy}
The ImEx2-ctr is uniformly AP as shown in the following theorem. The proof is similar to that of \Cref{thm:accuracyHHEord1} but it is more technical and is thus presented in \Cref{apdx:Ord2AccuracyProof} for the sake of legibility.
\begin{theorem}\label{thm:accuracyHHEord2}
  Suppose that \cref{cnd:L2stbltyHHEord2,cnd:LInftyStbltyHHE_AP2ctr} holds. Then the ImEx2-ctr scheme \cref{eq:AP2_ctr_HHE} provides $c_j^n(E) = O(\Delta x^2)$ and  $c_j^n(F)= O(\Delta x^2)$ independently of $\epsilon$.
\end{theorem}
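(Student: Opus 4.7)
The plan is to follow the same template as the proof of \Cref{thm:accuracyHHEord1}, but carry all Taylor expansions to one more order and carefully track how the four factors $M_1,\,M_1^+,\,M_2,\,M_2^+$ behave. First I would write $c_j^n(E)$ by Taylor-expanding each discrete operator in \cref{eq:AP2_ctr_HHE_E} about $(t^n,x_j)$, producing the formal sum
\begin{equation*}
c_j^n(E) = \partial_t E + \tfrac{M_1}{\varepsilon}\partial_x F - \tfrac{M_1^+\Delta t}{2\varepsilon^2}\partial_{xx}E + R_E^n,
\end{equation*}
where $R_E^n$ collects the remainders $\mathcal{O}(\Delta t^2\|\partial_{ttt}E\|_\infty)$, $\mathcal{O}(M_1\Delta x^2\|\partial_{xxx}F\|_\infty/\varepsilon)$, $\mathcal{O}(M_1^+\Delta t\Delta x^2\|\partial_{xxxx}E\|_\infty/\varepsilon^2)$. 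The bulk of the work is to rewrite the first three terms as $M_1\bigl(\partial_t E + \tfrac{1}{\varepsilon}\partial_x F\bigr)$ plus a quantity that vanishes to $\mathcal{O}(\Delta x^2)$ uniformly in $\varepsilon$. I would repeatedly use \cref{eq:HHE_F} to substitute $\partial_{xx}E=-\varepsilon\partial_{tx}F-(\sigma/\varepsilon)\partial_x F$ and similar identities, exactly as in \Cref{thm:accuracyHHEord1}, together with the algebraic relation $M_1^+ = M_1(1+\sigma\Delta t/(2\varepsilon^2))$ that is built into the factors by the Reverse Runge-Kutta derivation.

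The remainder terms are then controlled by a regime-by-regime analysis exactly as done at the end of the proof of \Cref{thm:accuracyHHEord1}, using that the stability constraints \cref{cnd:L2stbltyHHEord2,cnd:LInftyStbltyHHE_AP2ctr} imply $\Delta t = \mathcal{O}(\varepsilon\Delta x)$ when $\varepsilon\gg\Delta x$ and $\Delta t=\mathcal{O}(\Delta x^2)$ when $\varepsilon\ll\Delta x$, together with Chapman-Enskog type bounds $\|\partial_x^k F\|_\infty=\mathcal{O}(\varepsilon)$ and the recurring identity $M_1\Delta t/\varepsilon^2\leq 2/\sigma$ (and analogously for $M_1^+,M_2,M_2^+$). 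This shows $c_j^n(E)=\mathcal{O}(\Delta x^2)$.

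The equation on $F$ is more delicate and this is where I expect the main obstacle. Taylor-expanding \cref{eq:AP2_ctr_HHE_F} one produces, besides analogues of the terms above, the contribution $-\tfrac{\sigma M_2}{\varepsilon^2}\tilde{F}_j^n$ with $\tilde{F}_j^n = F_j^n + \tfrac{\Delta x^2}{6}\partial_{xx}F_j^n + \mathcal{O}(\Delta x^4)$. The point of the $1,4,1$ averaging is precisely that the $\Delta x^2/6$ correction in $\tilde{F}$ cancels exactly the $\mathcal{O}(\Delta x^2)$ contribution coming from the central difference of $\partial_x E$ combined with \cref{eq:HHE_F}; I would verify this cancellation by matching Taylor coefficients at order $\Delta x^2$. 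The remaining task is to rewrite the residual as $M_2\bigl(\partial_t F + \tfrac{1}{\varepsilon}\partial_x E + \tfrac{\sigma}{\varepsilon^2}F\bigr)$ plus $\mathcal{O}(\Delta x^2)$ controllable terms, which requires the algebraic identity $M_2^+ = M_2(1+\sigma\Delta t/(2\varepsilon^2))$ and the additional relation between $M_2$ and $M_1$ inherited from the backward step \cref{eq:BackwardStep_dxF}.

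The hard part is precisely bookkeeping of these four rational functions of $\sigma\Delta t/\varepsilon^2$ and verifying that they cooperate to give the second-order cancellations in every regime; I would organize this by first proving the uniform bounds $M_1,M_1^+,M_2,M_2^+ \in [0,\mathcal{O}(1)]$ and the scaling inequalities $M_i^{(+)}\sigma^k\Delta t^k/\varepsilon^{2k}\leq C$ analogous to \cref{ineq:M1p,ineq:M2p}, then plugging these bounds into the residuals from the Taylor expansions and concluding by the same regime analysis as for $E$. The conclusion $c_j^n(F)=\mathcal{O}(\Delta x^2)$ uniformly in $\varepsilon$ then follows.
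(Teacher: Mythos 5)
Your plan follows essentially the same route as the paper's proof in \Cref{apdx:Ord2AccuracyProof}: Taylor-expand, let the $\Delta x^2/6$ error of the centered flux on $E$ and the $\Delta x^2/6$ correction hidden in the $1,4,1$ average $\tilde F$ combine into $\tfrac{M_2\Delta x^2}{6\varepsilon}\bigl(\partial_{xxx}E+\tfrac{\sigma}{\varepsilon}\partial_{xx}F\bigr)$, which \cref{eq:HHE_F} turns into $-\varepsilon\partial_{txx}F=\mathcal{O}(\varepsilon)$; then factor out the common denominator of the $M$-factors, use the PDE at the half-time level, and close with a regime-by-regime bound on the leftover $\Delta t^2/\varepsilon$ and $\Delta t^3/\varepsilon^3$ terms under the stability constraint. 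Two small corrections. First, the $1,4,1$ average does not produce an \emph{exact} cancellation at order $\Delta x^2$; the sum of the two contributions is $\mathcal{O}(\varepsilon)$ rather than zero, which is what neutralizes the $1/\varepsilon$ prefactor --- your phrasing suggests you have this mechanism in mind, but the computation should be stated that way. Second, and more importantly, the identity $M_2^+=M_2\bigl(1+\tfrac{\sigma\Delta t}{2\varepsilon^2}\bigr)$ is false: it holds for the pair $(M_1,M_1^+)$ but for the second pair the correct relation is $M_2^+\bigl(1+\tfrac{\sigma\Delta t}{2\varepsilon^2}\bigr)=M_2\bigl(1+\tfrac{\sigma\Delta t}{\varepsilon^2}\bigr)$. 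This is not cosmetic: the rewriting of $\tfrac{M_2}{\varepsilon}\partial_xE^n_j-\tfrac{M_2^+\Delta t}{2\varepsilon^2}\partial_{xx}F^n_j$ as $\partial_xE^{n+1/2}_j+\tfrac{\sigma\Delta t}{2\varepsilon^2}\partial_xE^{n+1}_j$ over the common denominator $1+\tfrac{\sigma\Delta t}{\varepsilon^2}\bigl(1+\tfrac{\sigma\Delta t}{2\varepsilon^2}\bigr)$ hinges on these exact coefficients, and with your version the mismatch is of size $(\sigma\Delta t/\varepsilon^2)^2$, which is not negligible in the stiff regime $\varepsilon^2\ll\sigma\Delta t$. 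With that identity fixed, your bookkeeping strategy coincides with the paper's.
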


\subsection{Alternative fluxes: MUSCL reconstruction} 
\label{sbsec:HHEord2inter} 
The ImEx2-ctr scheme \cref{eq:AP2_ctr_HHE} has a restrictive stability condition in the regimes where $\varepsilon \leq \Delta x$. This means that the interval between upper and lower bounds on $\Delta t$, to ensure that no spurious oscillations appear in the vicinity of discontinuities, almost collapses.
However the use of a MUSCL method with customary approximation of the fluxes (Rusanov, HLL or Roe) suffers accuracy loss in the regime $\varepsilon \approx \Delta x$ in a similar manner to what occurs with the first order scheme. 
We circumvent this and modify scheme~\cref{eq:AP1_itr_HHE} (which does not lose accuracy in this regime) with $\lambda_{j+\frac{1}{2}} = 0$ in~\cref{eq:flux_AP1_itr_HHE_F} (no diffusion) and $\lambda_{j+\frac{1}{2}} = 1$ with a MUSCL-like reconstruction (see for instance \cite{toro2013riemann}) in~\cref{eq:flux_AP1_itr_HHE_E}.
Since the implicit flux $F$ is re-injected in the $E$ equation, the scheme eventually achieves second order accuracy in both unknowns. The slopes in this reconstruction are limited using a minmod limiter to preserve stability. This scheme is afterward referred to as ImEx2-minimod. Numerical evidence of the relevance of these choices are presented in \Cref{sec:num}.


\section{Euler-friction}\label{sec:EF}

In this section, we adapt the schemes ImEx1-ctr \cref{eq:AP1_ctr_HHE} and ImEx2-ctr \cref{eq:AP2_ctr_HHE} to barotropic Euler-friction \cref{eq:EF} that reads \Cref{eq:scaledNLhypSys} with: 
\begin{equation}
  \StateVec=
  \begin{pmatrix}
    \rho\\
    \rho u
  \end{pmatrix},\quad
  \flx\left( \StateVec \right)=
  \begin{pmatrix}
    \rho u\\
    \rho u^{2}+p\left( \rho \right)
  \end{pmatrix},\quad
  \St\left( \StateVec \right)= B \StateVec,\quad
  B=
  \begin{pmatrix}
    0&0\\
    0&1
  \end{pmatrix}.\label{eq:EF_rewritten} 
\end{equation}
with $p^{\prime}\left( \rho \right)>0$. We no longer assume that $\sigma$ is constant, it may depend on space: $\sigma(x)$, leading to a steady solution for long times for which $\rho$ can have arbitrary complex shape unlike in the $\sigma$-constant case where it must be linear.
Furthermore, we assume that the flux is homogeneous, i.e. $\flx(\StateVec) = A(\StateVec) \StateVec$ where $A(\StateVec)$ is the Jacobian of $\flx$, and more specifically we consider $p(\rho) = C\rho^\iota$. The homogeneity of the flux, essentially depending on the choice of equation of state for the pressure term, is verified for a wide variety of models (see for instance \cite{godlewski2013numerical}), including the full set of Euler equations.

\subsection{First-order AP-scheme}\label{sbsec:EFord1}
We recall the full structure of the scheme:
\begin{equation}
  \frac{\StateVec^{n+1}_{j}-\StateVec^{n}_{j}}{\Delta t}+\frac{1}{\varepsilon}\left[ \dx \flx\left( \StateVec \right) \right]^{n+1}_{j} = -\frac{B}{\varepsilon^2} \sigma_j\StateVec_j^{n+1}.
  \label{eq:scheme_EF_generic}
\end{equation}
We need to design the term $\left[ \dx \flx\left( \StateVec \right) \right]^{n+1}_{j}$. 
Assuming sufficient regularity, we have:
\begin{equation*}
  \dt \flx \left( \StateVec \right) = A\left( \StateVec \right)\dt\StateVec = -\frac{1}{\varepsilon}A\left( \StateVec \right) \dx \flx \left( \StateVec \right) -\frac{\sigma}{\varepsilon^{2}}A\left( \StateVec \right)B\StateVec.
\end{equation*}
Formally, the identity $\left[ \flx\left( \StateVec \right) \right]^{n+1}_{j} = \left[ \flx\left( \StateVec \right) \right]^{n}_{j}+\Delta t \left[ \dt\flx\left( \StateVec \right) \right]^{n}_{j}$ is rewritten:
\begin{equation}
  \left[ \flx\left(\StateVec\right) \right]^{n+1}_{j} = \left[ \flx\left( \StateVec \right) \right]^{n}_{j} - \frac{\Delta t}{\varepsilon}\left[ \left(A(\StateVec)\dx\flx\left( \StateVec \right) \right)\right]^{n}_{j} - \frac{\Delta t\sigma_j}{\varepsilon^2}\left[ \left( A(\StateVec) B \StateVec \right)\right]_j^*, \label{eq:construction_flx_EF_imp}
\end{equation}
where the flux term is chosen explicit and the source term is chosen of the form:
\begin{align}
  &\left[ A(\StateVec) B \StateVec \right]_j^* = A(\StateVec_j^n) B \StateVec_j^*, \label{eq:def_source_*_EF}\\
   &\StateVec_j^* = \StateVec_j^n - \frac{\Delta t}{\varepsilon}\left[ \dx\flx\left( \StateVec \right)\right]^{n}_{j} - \frac{\Delta t\sigma_j}{\varepsilon^2} B \StateVec_j^* = I_{M_j} \left(\StateVec_j^n - \frac{\Delta t}{\varepsilon}\left[ \dx\flx\left( \StateVec \right)\right]^{n}_{j}\right), \nonumber
\end{align}
with $I_{M_j} = Diag(1,M_j)$ and $M_j = 1 / (1+\sigma_j\Delta t/\varepsilon^2)$ as in~\cref{eq:AP1_HHE_fullStruct}.
This corresponds to fixing the non-linear part at time $t^n$ explicit and the linear part at time $t^{n+1}$ computed using~\cref{eq:scheme_EF_generic} with explicit fluxes. All together, using that $A(\StateVec) \StateVec = \flx(\StateVec)$ and applying (formally) the space derivative to~\cref{eq:construction_flx_EF_imp} leads to:
\begin{subequations}
  \label{eq:flx_EF_imp}
  \begin{align}
    \left[ \dx \flx\left( \StateVec \right) \right]^{n+1}_{j} &=  \left[ \dx \flx_{M}\left( \StateVec \right) \right]^{n}_{j} - \frac{\Delta t}{\varepsilon}\left[ \dx \left(A_M(\StateVec)\dx\flx\left( \StateVec \right)\right) \right]^{n}_{j},  \\
    A_M(\StateVec) &= A\left( \StateVec \right)I_{M}, \qquad \flx_M(\StateVec) = A_M(\StateVec)\StateVec,
  \end{align}
  which, applied to~\cref{eq:EF_rewritten}, provides:
  \begin{align}
    \flx_{M}\left( \StateVec \right)&=
    \begin{pmatrix}
      M \rho u\\
      \left( 2M-1 \right)\rho u^{2}+p\left( \rho \right)
    \end{pmatrix},\\
    A_{M}(\StateVec)\dx \flx\left( \StateVec \right) &=
    \begin{pmatrix}
      M\dx\left( \rho u^{2}+p\left( \rho \right) \right)\\
      2uM\dx \left( \rho u^{2}+p\left( \rho \right) \right)+\left( \left( p^{\prime}\left( \rho \right) \right)^{2}-u^{2} \right)\dx (\rho u)
    \end{pmatrix}.
  \end{align}
\end{subequations}
Coming back to~\cref{eq:scheme_EF_generic}, the complete structure finally reads:
\begin{equation}
  \frac{\StateVec^{n+1}_{j} - \StateVec^{n}_{j}}{\Delta t}+\frac{1}{\varepsilon}\left[I_{M} \dx\flx_{M}\left( \StateVec \right) \right]^{n}_{j}-\frac{\Delta t}{\varepsilon^{2}}\left[ I_{M}\dx \left(A_{M}\dx \flx\left( \StateVec \right) \right) \right]^{n}_{j}=-\frac{\sigma_{j} M_{j}}{\varepsilon^{2}}B\StateVec^{n}_{j}. \label{eq:scheme_ImEx1_structure}
\end{equation}
The choices made in this construction extends the ones made in the linear case in~\cref{eq:DescreteXderHHE} to the non-linear Euler-Friction equation~\cref{eq:EF_rewritten}  while preserving a tractable formula. Indeed, using this generic construction to the linear HHE~\eqref{eq:HHE} yields exactly~\eqref{eq:DescreteXderHHE}.

\subsubsection{Spatial discretization}\label{sbsbsec:EFord1inter}
We must choose how to compute the terms $\left[I_{M} \dx\flx_{M}\left( \StateVec \right) \right]^{n}_{j}$ and $\left[ I_{M}\dx (A_{M}\dx \flx\left( \StateVec \right)) \right]^{n}_{j}$. For the fluxes we choose a modified Rusanov approach (see \cref{eq:AP1_itr_HHE}) that reads:
\begin{align}
  \left[ I_{M}\dx\flx_{M}\left( \StateVec \right) \right]^{n}_{j}=& I_{M_{j}}\left[\frac{\flx_{M_{j+1}}\left( \StateVec^{n}_{j+1}\right)-\flx_{M_{j-1}
  }\left( \StateVec^{n}_{j-1} \right)}{2\Delta x} \right. \label{eq:flux_ImEX1_ctr_EF} \\
  &\left.-\frac{\Delta x}{2}\frac{\lambda_{j+1/2}\left( \StateVec^{n}_{j+1}-\StateVec^{n}_{j} \right)-\lambda_{j-1/2}\left( \StateVec^{n}_{j}-\StateVec^{n}_{j-1} \right)}{\Delta x^{2}} \right], \nonumber
\end{align}
where $\lambda_{j+1/2}=\max\left( M_{j+1}\left|u_{j+1}\right|,M_{j}\left|u_{j}\right| \right)$. Regarding the second-order derivatives in space, a classical centered discretization is used. In this discretization step, one must choose what value to use for $M_{j+1/2}$ and $u_{j+1/2}^{n}$. Here the coefficient $M_{j+1/2}$ is obtained by replacing $\sigma_j$ by $(\sigma_j+\sigma_{j+1})/2$ in $M$ and the velocity is obtained with the approximation $u_{j+1/2}^{n} = (\sqrt{\rho^{n}_{j+1}}u^{n}_{j+1}+\sqrt{\rho^{n}_{j}}u^{n}_{j})/(\sqrt{\rho^{n}_{j+1}}+\sqrt{\rho^{n}_{j}})$.

\subsubsection{Stability and accuracy}\label{sbsbsec:EFord1stability} 
The linearized version of the Euler-friction system is (up to a linear change of variable) equivalent to the HHE. Thus, the linear stability and truncation error analyses follow from that of the previous sections. Besides, we can obtain a positivity result on 
the mass density.
\begin{theorem}
The first-order ImEx scheme \cref{eq:scheme_ImEx1_structure} with fluxes \cref{eq:flux_ImEX1_ctr_EF} for the Euler-friction equations preserves the positivity of the mass density under the condition:
  \begin{equation}\label{cnd:AP2ctrEFdt}
    \Delta t_{max}\leq \frac{\sigma_{min}\Delta x^{2}}{2}\frac{\left( 1-\frac{\varepsilon\left| u \right|^{n}_{max}}{\sigma_{min}\Delta x} \right)+\sqrt{\left( 1-\frac{\varepsilon\left| u \right|^{n}_{max}}{\sigma_{min}\Delta x} \right)^{2}+4\times\frac{2\varepsilon^{2} \left( \left( \left| u \right|^{n}_{max} \right)^{2} +c^{2}\right)}{\sigma_{max}^{2}\Delta x^{2}}}}{2},
  \end{equation}
that is for $n\geq 0$, $\left( \rho^{n}_{j}\geq 0, \quad \forall j \in\left\{ 1,...,N \right\}  \right)$ implies $\left( \rho^{n+1}_{j}\geq 0, \quad \forall j \in\left\{ 1,...,N \right\} \right)$.
\end{theorem}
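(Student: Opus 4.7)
The plan is to show that $\rho_j^{n+1}$ can be written as a non-negative linear combination of $\rho_{j-1}^n$, $\rho_j^n$ and $\rho_{j+1}^n$, from which the desired positivity follows immediately. Since $B=\diag(0,1)$ annihilates the first row, the mass component of the scheme~\cref{eq:scheme_ImEx1_structure} contains no source term, and using the explicit forms in~\cref{eq:flx_EF_imp} it reduces to
\begin{equation*}
\rho_j^{n+1} = \rho_j^n - \frac{\Delta t}{\varepsilon}\bigl[\dx(M\rho u)\bigr]^{n}_{j} + \frac{\Delta t^{2}}{\varepsilon^{2}}\bigl[\dx\bigl(M\dx(\rho u^{2}+p(\rho))\bigr)\bigr]^{n}_{j}.
\end{equation*}
I would substitute the modified Rusanov stencil~\cref{eq:flux_ImEX1_ctr_EF} for the first bracket and a standard centered three-point stencil for the second.

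Next, I would regroup the right-hand side by the values of $\rho_k^n$ for $k\in\{j-1,j,j+1\}$. Using $(\rho u)_k=\rho_k u_k$ and $\rho u^{2}+p(\rho)=\bigl(u^{2}+p(\rho)/\rho\bigr)\rho$, together with the a priori bound $u_k^{2}+p(\rho_k)/\rho_k\leq (|u|_{max}^n)^{2}+c^{2}$, yields
\begin{equation*}
\rho_j^{n+1} = \alpha_{-1}\rho_{j-1}^n + \alpha_0\rho_j^n + \alpha_{+1}\rho_{j+1}^n,
\end{equation*}
where the off-diagonal coefficients take the form
\begin{equation*}
\alpha_{\pm 1} = \frac{\Delta t}{2\varepsilon\Delta x}\bigl(\lambda_{j\pm 1/2}\mp M_{j\pm 1}u_{j\pm 1}\bigr) + \frac{\Delta t^{2} M_j M_{j\pm 1/2}\bigl((|u|_{max}^n)^{2}+c^{2}\bigr)}{\varepsilon^{2}\Delta x^{2}}.
\end{equation*}
Non-negativity of $\alpha_{\pm 1}$ is immediate from $\lambda_{j\pm 1/2}\geq M_{j\pm 1}|u_{j\pm 1}|$ built into~\cref{eq:flux_ImEX1_ctr_EF} and the positivity of the $M$ factors.

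The remaining step is to secure $\alpha_0\geq 0$ under the stated CFL. Bounding $\lambda_{j\pm 1/2}\leq |u|_{max}^n$ and the $M$ factors by $M\leq \varepsilon^{2}/(\varepsilon^{2}+\sigma_{min}\Delta t)$ on the convective contribution, together with $M_j(M_{j+1/2}+M_{j-1/2})\leq 2\varepsilon^{4}/(\sigma_{max}^{2}\Delta t^{2})$ on the diffusive one, reduces the condition $\alpha_0\geq 0$ to a quadratic inequality in $\Delta t$ whose positive root is precisely the $\Delta t_{max}$ given in~\cref{cnd:AP2ctrEFdt}.

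The hard part will be the treatment of the nonlinear diffusion-like contribution $M\dx(\rho u^{2}+p(\rho))$: because it is not a linear operator in $\rho$, its rewriting as a combination of $\rho_k^n$ hinges on the a priori controls $|u|\leq |u|_{max}^n$ and $p(\rho)/\rho\leq c^{2}$. The first is a time-level dependent estimate, while the second is clean in the isothermal case $p(\rho)=c^{2}\rho$ but for the general barotropic $p(\rho)=C\rho^{\iota}$ must be interpreted through a suitable upper bound on $p'(\rho)$. A secondary difficulty is keeping the bookkeeping of the $M_j$, $M_{j\pm 1/2}$ factors tight enough to recover the asymmetric $\sigma_{min}$/$\sigma_{max}$ dependence that characterises~\cref{cnd:AP2ctrEFdt}.
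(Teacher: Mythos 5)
Your proposal follows essentially the same route as the paper: expand the density update exactly as a three-point combination $\alpha_{-1}\rho_{j-1}^{n}+\alpha_{0}\rho_{j}^{n}+\alpha_{+1}\rho_{j+1}^{n}$, observe that $\alpha_{\pm1}\geq 0$ because $\lambda_{j\pm1/2}\geq M_{j\pm1}\left|u_{j\pm1}\right|$ by construction of the modified Rusanov flux, and enforce $\alpha_{0}\geq 0$ by bounding $u$ by $\left|u\right|^{n}_{max}$ and the $M$ factors by $M_{max}=1/\left(1+\sigma_{min}\Delta t/\varepsilon^{2}\right)$, which reduces to the quadratic whose positive root is \cref{cnd:AP2ctrEFdt}. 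One caution: your intermediate bound $M_{j}M_{j\pm1/2}\leq 2\varepsilon^{4}/(\sigma_{max}^{2}\Delta t^{2})$ goes in the wrong direction ($M$ is decreasing in $\sigma$, so a valid upper bound must use $\sigma_{min}$), but this merely reproduces an inconsistency already present in the stated formula, and the paper's own proof likewise works with $\sigma_{min}$ throughout.
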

\begin{proof} Compute:
  \begin{align*}
    \rho^{n+1}_j =& \left(1-\frac{\Delta t}{\varepsilon\Delta x}\frac{\lambda_{j+1/2}+\lambda_{j-1/2}}{2}-\frac{\Delta t^2}{\varepsilon^2\Delta x^2}\left(M_{j+1/2}+M_{j-1/2} \right)\left( \left( u^{n}_{j} \right)^2+c^2\right)\right)\rho^n_j\\
    &+\frac{\Delta t}{\varepsilon\Delta x}\left(\frac{M_{j+1/2}\Delta t}{\varepsilon\Delta x}\left(\left( u^{n}_{j+1} \right)^{2}+c^2\right)+\frac{\lambda_{j+1/2}}{2}-\frac{M_{j+1}u_{j+1}^{n}}{2}\right)\rho^n_{j+1}\\
    &+\frac{M\Delta t}{\varepsilon\Delta x}\left(\frac{M_{j-1/2}\Delta t}{\varepsilon\Delta x}\left(\left( u^{n}_{j-1} \right)^{2}+c^2\right)+\frac{\lambda_{j-1/2}}{2}+\frac{M_{j-1}u_{j-1}^{n}}{2}\right)\rho^n_{j-1}.
  \end{align*}
  By construction $\lambda_{j+1/2}\geq \left|M_{j+1}u_{j+1}\right|$ and $\lambda_{j-1/2}\geq \left|M_{j+1}u_{j-1}\right|$, thus the factor in front of $\rho^{n}_{j\pm1}$ are positive. The factor in front of $\rho^{n}_{j}$ if the following inequality holds:
  \begin{equation*}
    1-\frac{\Delta t}{\varepsilon\Delta x}M_{max}\left| u \right|^{n}_{max}-\frac{2\Delta t^{2}}{\varepsilon^{2}\Delta x^{2}}M_{max}\left( \left( \left| u \right|^{n}_{max} \right)^{2}+c^{2} \right)\geq 0,
  \end{equation*}
  where $M_{max} = 1/\left( 1+\sigma_{min}\Delta t/\varepsilon^{2} \right)$, $\sigma_{min} = \min_{j} \sigma_{j}$ and $\left| u \right|^{n}_{max} = \max_{j} \left| u^{n}_{j} \right|$. This leads to a second-order equation which positive solution is \cref{cnd:AP2ctrEFdt}.
\end{proof}

\subsection{Second-order AP-scheme}\label{sbsec:EFord2}
The second-order scheme is of the form:
\begin{equation}
  \frac{\StateVec^{n+1}_{j}-\StateVec^{n}_{j}}{\Delta t}+\frac{1}{\varepsilon}\left[ \dx \flx\left( \StateVec \right) \right]^{n+1/2}_{j}=-\frac{\sigma_{j}}{\varepsilon^{2}}B\StateVec^{n+1/2}_{j},
  \label{eq:2ndorder_scheme_EF}
\end{equation}
where we need to define both $\left[ \dx \flx\left( \StateVec \right) \right]^{n+1/2}_{j}$ and $\StateVec^{n+1/2}$.
Concerning the source term, we use again the Reverse Runge-Kutta method as in \cref{eq:BackwardStepF}:
\begin{align}
  \StateVec^{n+1/2}_{j}
  &=\StateVec^{n+1}_{j}+\frac{\Delta t}{2\varepsilon}\left( \left[ \dx\flx\left( \StateVec \right) \right]^{n+1}_{j}+\frac{\sigma_{j}}{\varepsilon}B\StateVec^{n+1}_{j} \right). \label{eq:2ndorder_source_EF}
\end{align}
The flux term $\left[ \dx\flx\left( \StateVec \right) \right]^{n+1}_{j}$
is given by~\cref{eq:flx_EF_imp}.
For the flux term $\left[ \dx\flx\left( \StateVec \right) \right]^{n+1/2}_{j}$, we rewrite~\cref{eq:construction_flx_EF_imp} and~\cref{eq:def_source_*_EF} replacing $\Delta t$ by $\Delta t/2$ and the term $\StateVec_j^*$ at time $t^{n+\frac{1}{2}}$ is computed by: 
\begin{align*}
  \StateVec_j^* &= \StateVec_j^n - \frac{\Delta t}{2\varepsilon}\left[ \dx\flx\left( \StateVec \right)\right]^{n}_{j} - \frac{\Delta t\sigma_j}{2\varepsilon^2} B \left(\StateVec_j^* + \frac{\Delta t}{2\varepsilon}\left[\dx \flx(\StateVec)\right]_j^{n}+\frac{\sigma_j\Delta t}{2\varepsilon^2}B\StateVec_j^*\right) \\
  &= I_{M_{1_j}} \StateVec_j^n - \frac{\Delta t}{2\varepsilon}I_{M_{1_j}^+} \left[\dx \flx(\StateVec)\right]_j^n,
\end{align*}
where $M_1$ and $M_1^+$ are defined in~\cref{eq:AP2_ctr_HHE}. Here, the source term is computed at time $t^n$ using a Reverse Runge-Kutta step from time $t^{n+1/2}$ (the $*$-terms), as it is done in~\cref{eq:BackwardStep_dxF}.  
All together, this leads to the scheme:

\begin{align*}
  \frac{\StateVec^{n+1}_{j}-\StateVec^{n}_{j}}{\Delta t}&+\frac{I_{M_{3_j}}}{\varepsilon}\bigg[\left[\dx \flx_{M_1}\left( \StateVec \right) \right]^{n}_{j} - \frac{\Delta t}{2\epsilon}\left[\dx \left(A_{M_1^+}(\StateVec)\dx \flx\left( \StateVec \right) \right)\right]^{n}_{j} \\
    &-\frac{\sigma_j\Delta t}{2\varepsilon^2}B\left(\left[\dx \flx_{M} \left( \StateVec \right) \right]^{n}_{j}-\frac{\Delta t}{\epsilon}\left[\dx \left(A_{M} \dx\flx\left( \StateVec \right) \right) \right]^{n}_{j}\right) \bigg] = -\frac{\sigma_{j}M_{3_{j}}}{\varepsilon^{2}}B\StateVec^{n}_{j},
\end{align*}
where $M_{3} = 1/(1+(\sigma\Delta t/\varepsilon^{2})(1+(\sigma\Delta t/(2\varepsilon^{2}))))$.
The two flux and diffusion terms are kept separated because the coefficient $\sigma$ \textit{a priori} depends on space and do not pass under the space derivative sign.

Regarding the spatial discretization, as in Section~\ref{sbsec:HHEord2inter}, we use a centered discretization of the diffusive terms, and Lax-Friedrichs-like fluxes~\cref{eq:flux_ImEX1_ctr_EF} with $\lambda = 0$ in $\rho$ and $\lambda = 1$ in $\rho u$ combined with a MUSCL reconstruction on $\rho u$ (in a manner similar to that of \Cref{sbsec:HHEord2inter}).
Thus we have successfully extended the ImEx2-ctr scheme \cref{eq:AP2_ctr_HHE} to a nonlinear system with complex fluxes and non-constant $\sigma$ coefficient. The uniform second order accuracy can be obtained in the manner of \Cref{thm:accuracyHHEord2}.


\section{Numerical results}\label{sec:num}
We consider three test-cases. The first one is a bounda\-ry-value problem of the HHE \cref{eq:HHE}, for which an exact smooth 
solution can be derived. This analytical test-case is used to establish the order of the methods presented in this paper. 
The second test case is a Riemann problem used to demonstrate the capacity of the methods, in both linear and nonlinear settings, to handle correctly shocks. Finally the last test-case assesses how well the methods capture steady states in the nonlinear case with non-constant $\sigma$.

\subsection{Exact test-case}\label{sbsec:ExTC}
We consider the HHE complemented with Dirichlet boun\-dary conditions $E_L$, $E_F$, imposed on the $E$ variable respectively on the left and right boundaries of the domain. It can be shown that an exact solution of this problem is:
\begin{subequations}
  \begin{align*}
    E\left( t, x \right) &= f\left( t \right) g(x)+\frac{E_R-E_L}{x_R-x_L}\left( x-x_L \right)+E_L, \\
    F\left( t, x \right) &= \varepsilon f^{\prime}(t) G(x)-\frac{\varepsilon}{\sigma}\left( E_R-E_L \right),
  \end{align*}
\end{subequations}
for $t\geq 0$, $x\in [x_L,x_R]$, where:
\begin{gather*}
  f(t) = \alpha \frac{\lambda_{+}e^{\lambda_{-}t}-\lambda_{-}e^{\lambda_{+}t}}{\lambda_{+}-\lambda_{-}}+\beta\frac{e^{\lambda_{+}t}-e^{\lambda_{-}t}}{\lambda_{+}-\lambda_{-}}, \quad \lambda_{\pm} = -\frac{\sigma}{2\varepsilon^2}\left(1\mp\sqrt{1-\left(\frac{2\pi\varepsilon}{\sigma}\right)^2}\right), \\
  g(x) = \sin\left( \pi\left( x-x_L \right) \right), \quad G(x)=\frac{\cos\left( \pi\left( x-x_L \right) \right)}{\pi}.
\end{gather*}
The parameter $\alpha$ determines the amplitude of the solution and we choose $\beta = -\frac{\pi^{2}}{\sigma}\alpha$ in order for the initial condition to satisfy $\partial_{t} f = \mathcal{O}\left( \varepsilon \right)$. In the verification that follows we use $\alpha = \sigma = 1$ for simplicity.
\begin{figure}[htbp]
  \centering
  \includegraphics[width=0.98\columnwidth]{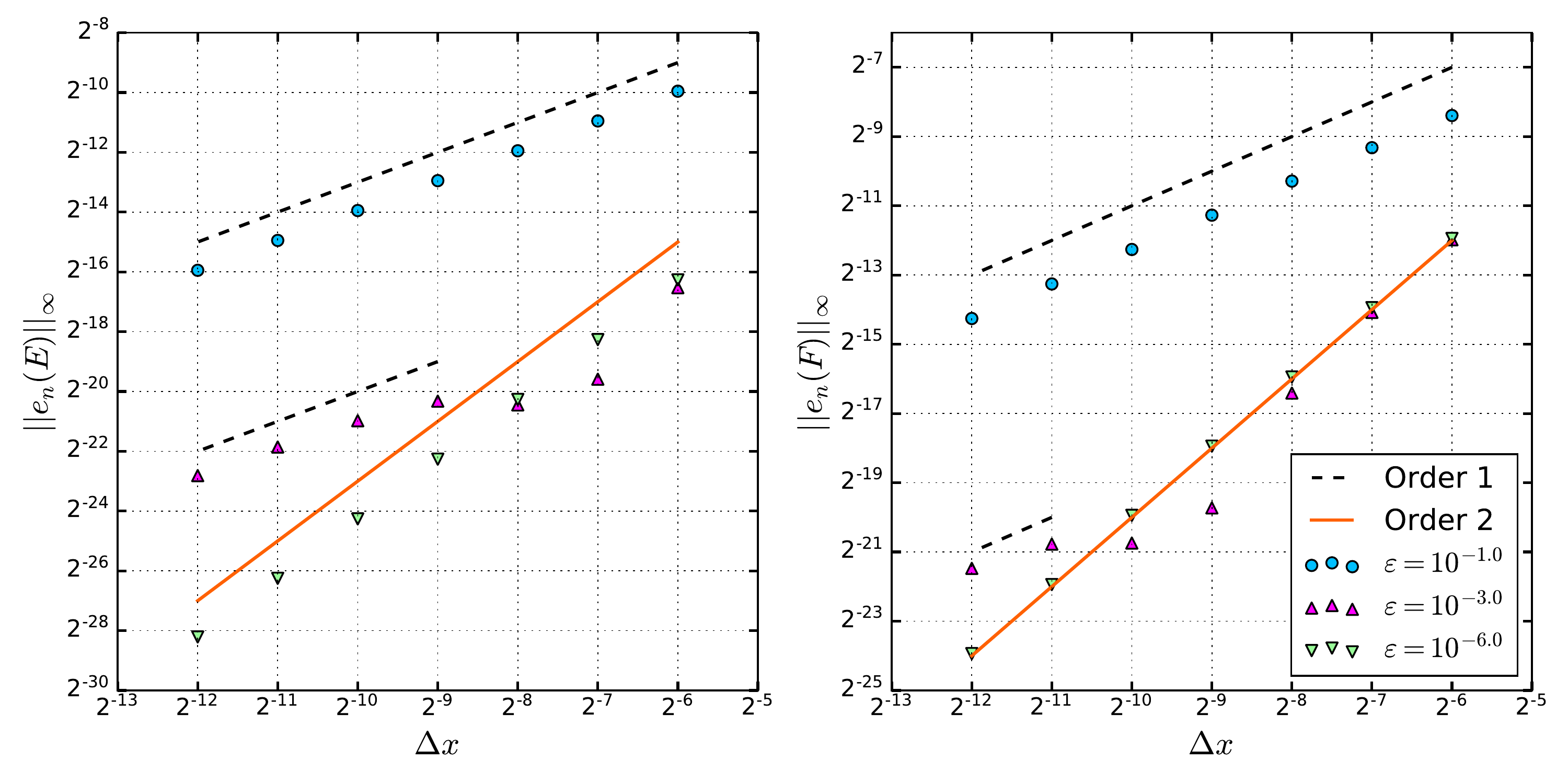}
  \caption{$l^\infty$-norm of the global errors on $E$ and $F$ of the ImEx1-ctr scheme \cref{eq:AP1_ctr_HHE} compared to the analytical solution for different values of $\varepsilon$ as a function of $\Delta x$.}
  \label{fig:OrdOfConvImEx1_ctr}
\end{figure}

In \Cref{fig:OrdOfConvImEx1_ctr}, we can observe that the ImEx1-ctr scheme \cref{eq:AP1_ctr_HHE} is of order one when we consider mesh size such that $\Delta x \leq \varepsilon$ and transition to a second order scheme in regimes where $\varepsilon \ll \Delta x$. For $\varepsilon = 10^{-3}$ the chosen range of mesh sizes $\Delta x$ encompasses both asymptotes with a transition around $\Delta x \approx \varepsilon$.
\begin{figure}[htbp]
  \centering
  \includegraphics[width=0.98\columnwidth]{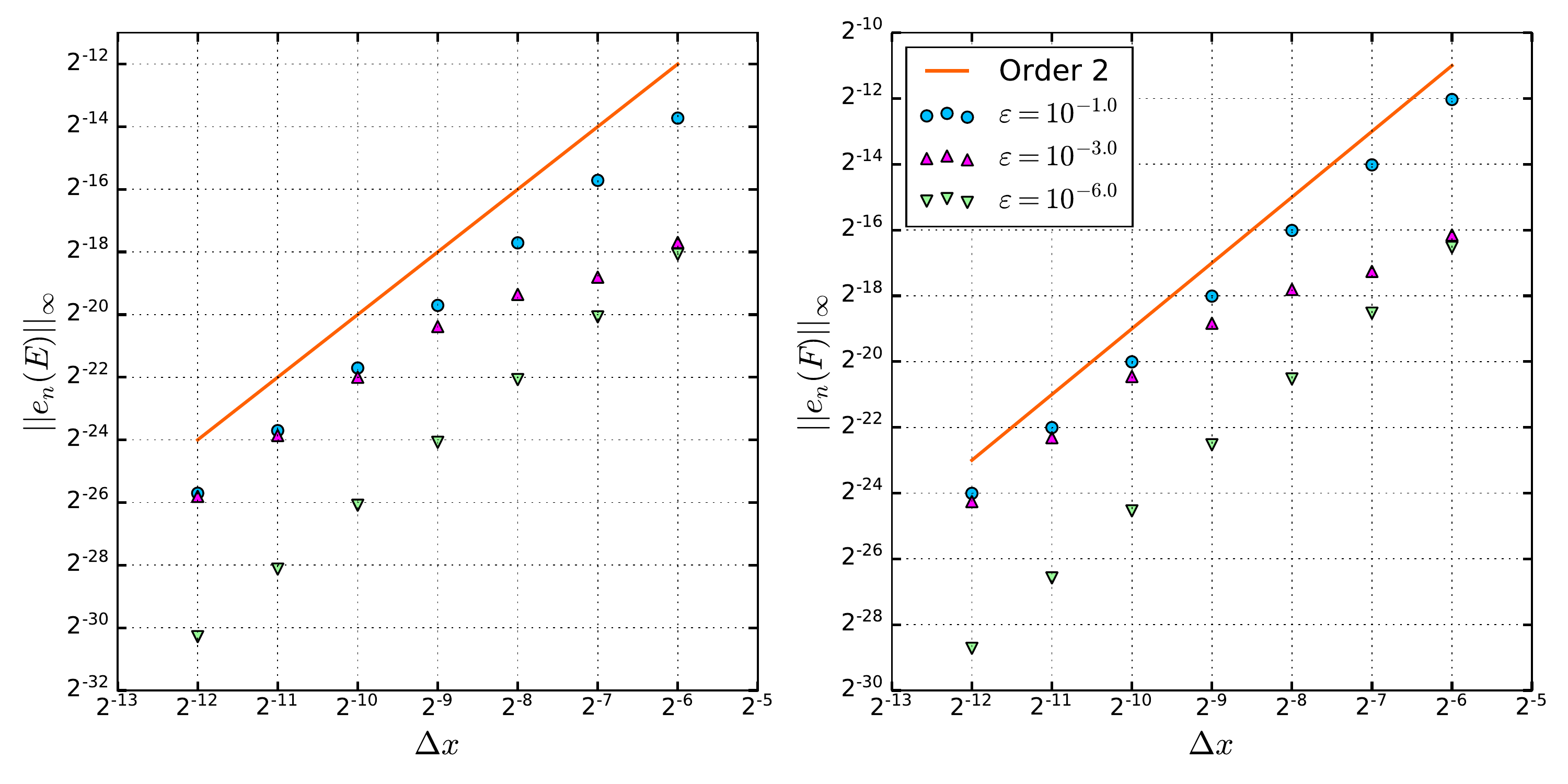}
  \caption{$l^\infty$-norm of the global errors on $E$ and $F$ of the ImEx2-ctr scheme \cref{eq:AP2_ctr_HHE} compared to the analytical solution for different values of $\varepsilon$ as a function of $\Delta x$.}
  \label{fig:OrdOfConvImEx2_ctr}
\end{figure}

We observe a similar transition, in \Cref{fig:OrdOfConvImEx2_ctr}, between two asymptotes related to the regimes $\Delta x \leq \varepsilon$ and $\Delta x\geq \varepsilon$. These asymptotes are now of slope 2, which mean the ImEx2-ctr scheme \cref{eq:AP2_ctr_HHE} guarantees second order accuracy uniformly in all regimes.
\begin{figure}[htbp]
  \centering
  \includegraphics[width=0.98\columnwidth]{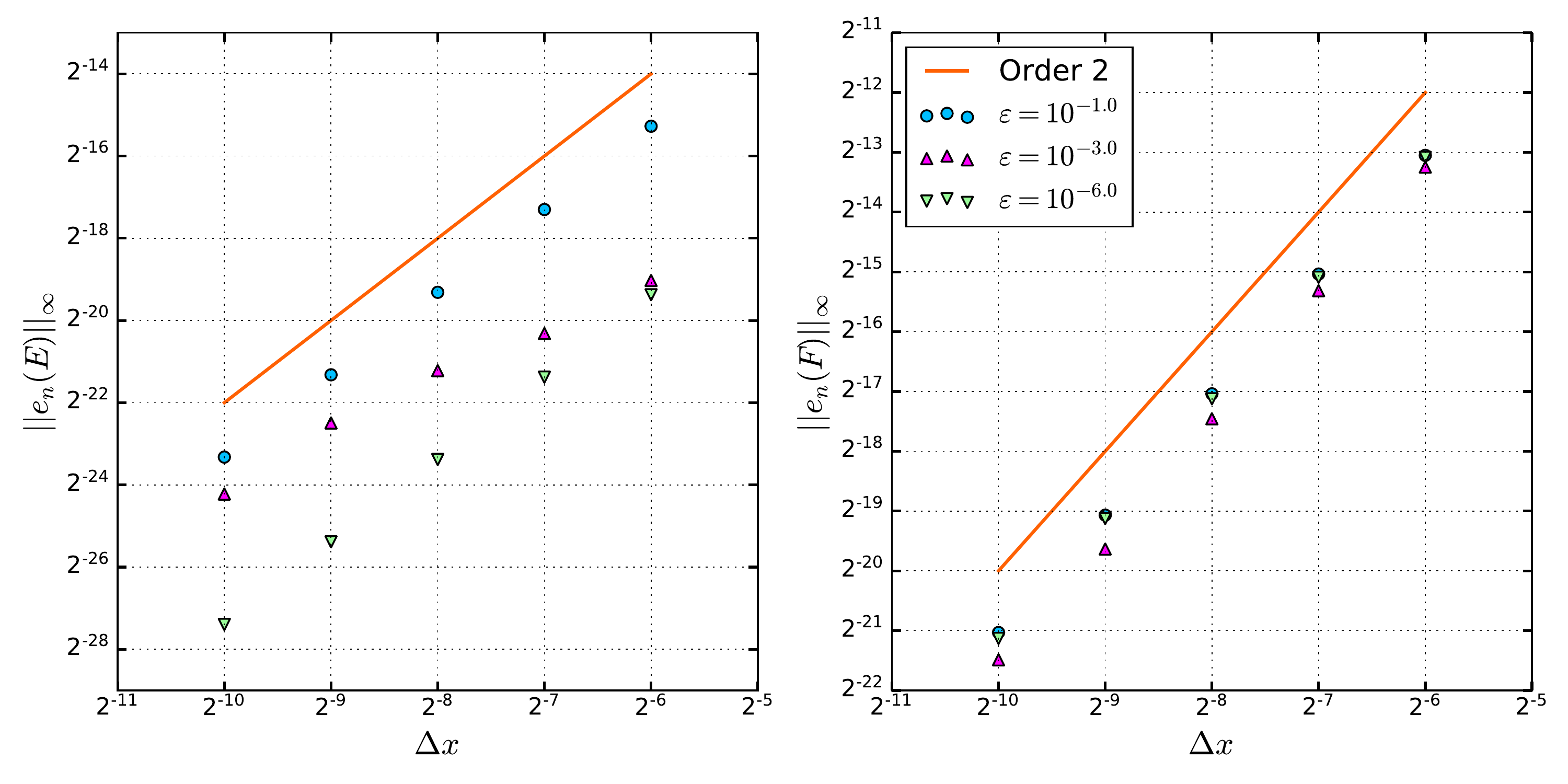}
   \caption{$l^\infty$-norm of the global errors on $E$ and $F$ of the ImEx2-minmod scheme compared to the analytical solution for different values of $\varepsilon$ as a function of $\Delta x$.}
  \label{fig:OrdOfConvImEx2_minmod}
\end{figure}

Lastly, \Cref{fig:OrdOfConvImEx2_minmod} confirms that computing the flux on variable $F$ using MUSCL reconstruction with a minmod limiter in the ImEx2-ctr scheme is still compatible with second order accuracy in all regimes.

\subsection{Riemann problem}\label{subsec:numRP}
The next test-case is that of an initial condition of the form of a Riemann problem. The linear case, which demonstrates the ability of the second-order method to be shock capturing without using slope limiters is presented in \Cref{supsec:RP}.
For the more challenging case of the nonlinear Euler-friction equations \cref{eq:EF}, this test-case reads:
  \begin{equation*}
    \rho(0,x)  = \mathbf{1}_{\left\{ x\leq \frac{x_R+x_L}{2} \right\}} \rho_L + \mathbf{1}_{\left\{ x> \frac{x_R+x_L}{2} \right\}} \rho_R, \quad
    \left( \rho u \right)(0,x ) = 0.
  \end{equation*}
\begin{figure}[htbp]
  \centering
  \includegraphics[clip, trim=0.0cm 0.5cm 0.0cm 0.0cm, width=1.\columnwidth]{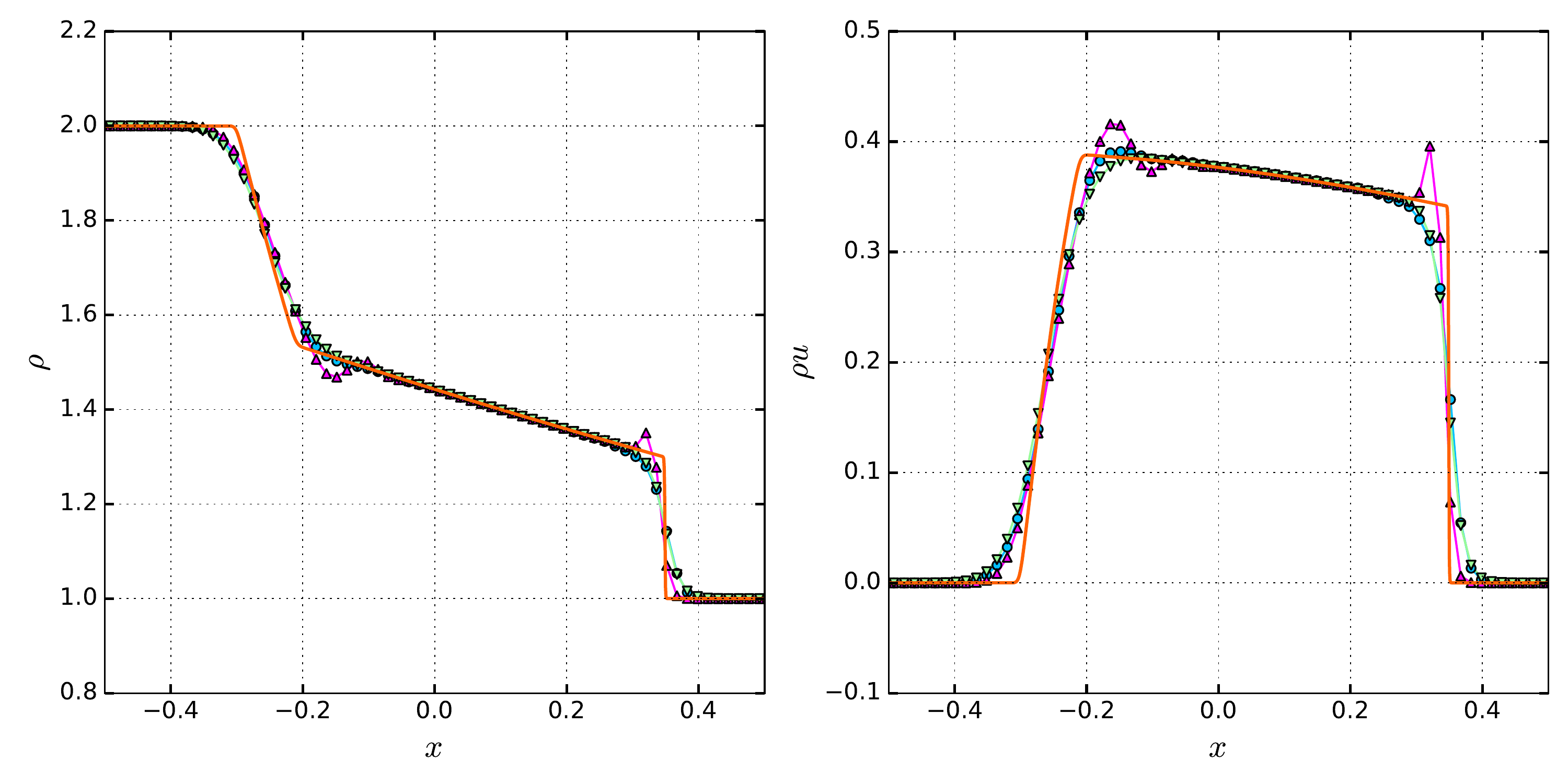}
  \caption{Approximated solutions at time $t=0.15$, with $\sigma = 1$ and $\varepsilon = 0.5$ (hyperbolic regime), computed respectively by the ImEx2-ctr method ($\triangle$), $\Delta t_{ctr} = 1.8\Delta t_{max}$, from \cref{cnd:L2stbltyHHEord2}, the ImEx2-minmod method ($\circ$), $\Delta t_{mnmd} = 1.18 \Delta t_{max}$, from \cref{cnd:L2stbltyHHEord2} and the MUSCL-Hancock method with Strang splitting, Reverse RK2 ($\triangledown$), $\Delta t_{MH} = 0.9 \min\left( 2\varepsilon^{2}/\sigma_{max},\varepsilon \Delta x/\left( u_{max}+c \right) \right)$, using $N=64$ cells, with the latter scheme used as reference (solid line) with $N=2048$ cells.}
  \label{fig:ShockEulerComp}
\end{figure}
We present the results in \Cref{fig:ShockEulerComp} for the hyperbolic regime. The hyperbolic regime is more challenging than intermediate and parabolic regimes, for which the methods can function without limiters (see \Cref{supsec:RP}). We can observe that the ImEx2-ctr scheme \cref{eq:AP2_ctr_HHE} triggers spurious oscillations whereas the ImEx2-minmod scheme essentially shows equivalent performances as the classical MUSCL-Hancock scheme coupled with Strang splitting. It must be noted that in the nonlinear case the additional numerical diffusion of the ImEx2-minmod scheme does not remove completely the lower boundary condition $\Delta t_{min}$, however the interval of possible values for the time step is large enough to be compatible with all the velocities present in the problem, unlike the ImEx2-ctr scheme.
As a result we have in the ImEx2-minmod scheme a method that incorporates enough numerical diffusion to be shock-capturing in the hyperbolic limit while retaining the asymptotic-preserving properties of the ImEx2-ctr in intermediate hyperbolic-parabolic and parabolic regimes.

\subsection{Steady state with non-constant relaxation coefficient}
The last test-case is concerned with the problem of correctly capturing steady states when the coefficient $\sigma$ is non longer assumed spatially constant. This case is particularly interesting since it allows to consider both parabolic and hyperbolic regimes in one configuration. For a specific function $\sigma(x)$, $x\in[x_L, x_R]$, the steady state solution can be easily characterized since we have for the isothermal Euler-friction equations:
\begin{equation*}
  \partial_x \left( \rho u \right) = 0,\quad \partial_x \left( \frac{\left( \rho u \right)^{2}}{\rho}+c^{2}\rho \right) =-\frac{\sigma}{\varepsilon}\rho u, \quad \rho(t,x_L) = \rho_L, \quad \rho(t,x_R) = \rho_R,
\end{equation*}
so that if we set $\rho u = \varepsilon a$ with $a\in \mathbb{R}$ we obtain:
\begin{subequations}
  \begin{align*}
    a &= \frac{\rho_L\rho_L I_\sigma}{2\varepsilon^{2}\left( \rho_L - \rho_R \right)}\left( \sqrt{1+\frac{4c^{2}\varepsilon^{2}\left( \rho_L - \rho_R \right)^{2}}{\rho_L\rho_L I_\sigma^{2}}}-1 \right), \quad
    I_\sigma = \int_{x_L}^{x_R} \sigma(x)\text{d}x, \\
    \rho(x) &= \frac{\bar{f}}{2c^{2}}\left( 1+\sqrt{1-\left( \frac{f_{min}}{\bar{f}} \right)^{2}} \right), \quad \bar{f} = \beta_\sigma f_R +\left( 1-\beta_\sigma \right)f_L, \quad f_{min} = 2\varepsilon a c
  \end{align*}
\end{subequations}
where
$f_R = \varepsilon^{2}a^{2}/\rho_R+c^{2}\rho_R$, $f_L = \varepsilon^{2}a^{2}/\rho_L+c^{2}\rho_L$, $\beta_\sigma = I_x/I_\sigma$ and $I_x = \int_{x_L}^{x}\sigma(x)\text{d}x$.
\begin{figure}[htbp]
  \centering
  \includegraphics[clip, trim=0.0cm 0.5cm 0.0cm 0.0cm, width=1.\columnwidth]{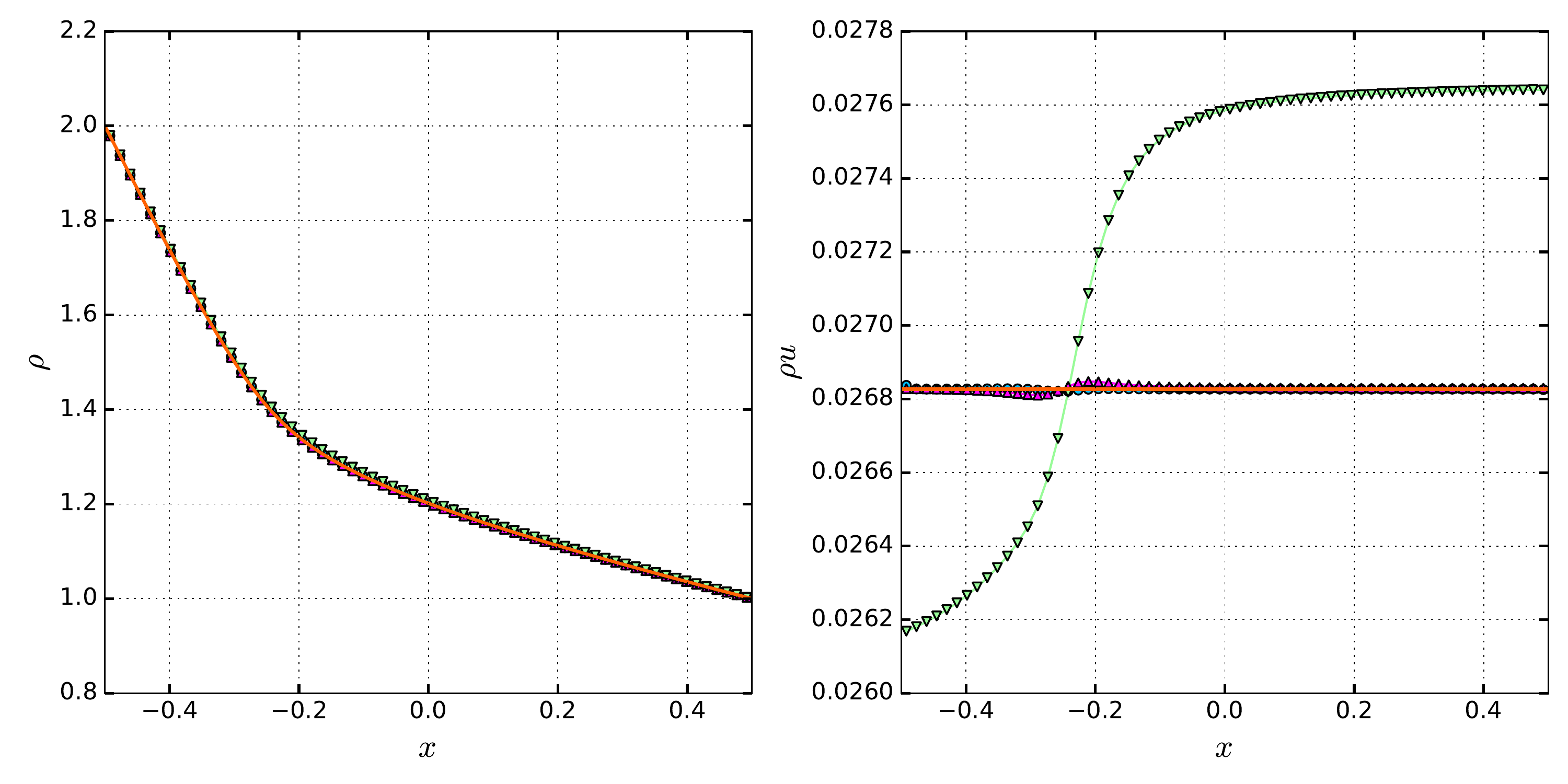}
  \caption{Density $\rho$ and momentum $\rho u$ obtained at time $t_{f} = 2$, with non-constant $\sigma$ and $\varepsilon = 10^{-2}$, respectively with the ImEx2-ctr method ($\triangle$), $\Delta t_{ctr} = 0.9\Delta t_{max}$, from \cref{cnd:L2stbltyHHEord2}, the ImEx2-minmod method ($\circ$), $\Delta t_{mnmd} = 0.9 \Delta t_{max}$, from \cref{cnd:L2stbltyHHEord2} and the MUSCL-Hancock method with Strang splitting, Reverse RK2 ($\triangledown$), $\Delta t_{MH} = 0.9 \min\left( 2\varepsilon^{2}/\sigma_{max},\varepsilon \Delta x/\left( u_{max}+c \right) \right)$, using $N=64$ cells, with the steady state solution used as reference (solid line).}
  \label{fig:SteadyStateComp}
\end{figure}

In \Cref{fig:SteadyStateComp}, we consider the case where:
\begin{equation*}
  \sigma(x) = \left( \sigma_{max}-\sigma_{min} \right)\frac{1-\frac{2}{\pi}\arctan\left( \frac{x-\frac{x_c}{2}}{\tau} \right)}{1-\frac{2}{\pi}\arctan\left( \frac{x_L-\frac{x_c}{2}}{\tau} \right)}+\sigma_{min},
\end{equation*}
where we define and chose $\sigma_{max} = 1$, $\sigma_{min} = 0.1$, $x_c = \left( x_R + x_L \right)/2$ and $\tau =$ $x_{c}/\left( 2 \tan \left( \pi/2\left( 1-\gamma \right) \right) \right)$, so that $1-\pi/2\arctan\left( x_c/\left( 2\tau \right) \right)=\gamma$, with $\gamma = 0.1$ in order to obtain a $\mathcal{C}^{\infty}$-function on $[x_L,x_R]$, going from $\sigma_{max}$ at $x_L$ to $\sigma_{min}$ at $x_R$ and reaches 90\% of its right-value at $x_c/2$.
What we observe is that although the ImEx2-ctr and ImEx2-minmod methods are not strictly well-balanced, in the sense of \cite{bouchut2004nonlinear}, especially where $\sigma^{\prime}$ is at its maximum value and at the boundaries for the minmod version, they still yield incomparably better results that the classical MUSCL-Hancock method coupled with Strang splitting.


\section{Conclusions}
\label{sec:conclusions}
We have derived two methods, ImEx2-ctr for the HHE and ImEx2-minmod for Euler-friction, which have the following properties:
\begin{itemize}
  \item[-] It is uniformly of second order accuracy in time and space in all regimes,
  \item[-] Its stability condition does not collapse with $\varepsilon$ in regimes where $\varepsilon\to0$,
  \item[-] It can be readily extended to the nonlinear case of Euler-friction equations,
  \item[-] In both linear and nonlinear setting it is shock-capturing; in the linear case and in the proper time step interval, ImEx2-ctr without slope limitation  is even $l^\infty$-stable
  \item[-] In both linear and nonlinear settings it captures accurately steady states when $\sigma$ is not constant in space.
\end{itemize}

Let us underline the novelties of the methodology. First the methods are build relying on a coupled space-time discretization in the spirit of Lax-Wendroff approach, but including an interesting  implicit treatment of the sources and fluxes involving stiff relaxation. 
In particular, a specific integration in time of the source is introduced, from the class of mono-implicit methods, with nice stability properties which outmatch the classical second order implicit method  \cite{Hairer1993,Kalitkin14,Skvortsov17}. 
This leads to fine evaluation and design of the stability properties, that are lost when time and space are decoupled as in most of the classic approaches. Besides, as opposed to the coupling of ODE solvers associated with WENO schemes for example, the proposed method is strongly compact in space and time in the spirit of a MUSCL-Hancock or Lax-Wendroff methods, while reaching interesting properties mentioned above. 

This study is the building block of a series of natural extensions. 
We presently investigate ways to ensure the shock capturing property in a less diffusive way than ImEx2-minmod using slope limiters compatible with coupling between the fluxes and the source term, that is still maintaining the space-time approach. Besides, 
we are designing extensions of the schemes presented here to two-dimensional settings, to the problem of low-Mach limit and to more complex systems of equations such as that of the Euler-Poisson equations that can be used in plasmas physics.
The ideas developed in this contribution are extended to plasma physics in a forthcoming paper and can also be used for the hyperbolic framework encountered in kinetic-fluid limits \cite{Reboul23}.


\appendix

\section{Proof of \Cref{thm:accuracyHHEord2}}\label{apdx:Ord2AccuracyProof}
  We start with the consistency error on $F$. Isolating the spatial error term reads:
  \begin{align*}
    c_j^n(F)
    =&\partial_t F^{n+1/2}_j+\frac{M_2}{\varepsilon}\partial_x E^n_j-\frac{M_2^+\Delta t}{2\varepsilon^2}\partial_{xx}F^n_j+\frac{\sigma M_2}{\varepsilon^2}F^n_j\\
    &+\mathcal{O}\left(\Delta t^2\right)+\mathcal{O}\left(\frac{M_2\Delta x^2}{6\varepsilon}\left(\partial_{xxx}E^n_j+\frac{\sigma}{\varepsilon}\partial_{xx}F^n_j\right)\right)+\mathcal{O}\left(\frac{M_2^+\Delta t}{2\varepsilon^2}\Delta x^2\right).
  \end{align*}
  Starting near the equilibrium manifold provides $\frac{1}{\varepsilon}\left(\partial_{xxx}E^n_j+\frac{\sigma}{\varepsilon}\partial_{xx}F^n_j\right)=\mathcal{O}\left(1\right)$. This legitimizes our choice of discretization for the source term.
  As $M_2^+\leq 2M$ (see~\cref{ineq:M2p}), these consistency error terms are controlled as in the proof of \Cref{thm:accuracyHHEord1}, via $M\Delta t/\varepsilon^{2}\leq 1$. Now we must compute the time-error generated in the term:
  \begin{align*}
      \frac{M_2}{\varepsilon}&\partial_x E^n_j-\frac{M_2^+\Delta t}{2\varepsilon^2}\partial_{xx}F^n_j\\
      &=\frac{1}{\varepsilon\left(1+\frac{\sigma\Delta t}{\varepsilon^2}\left(1+\frac{\sigma\Delta t}{2\varepsilon^2}\right)\right)}\left(\left(1+\frac{\sigma\Delta t}{2\varepsilon^2}\right)\partial_xE^n_j-\frac{\Delta t}{2\varepsilon}\left(1+\frac{\sigma \Delta t}{\varepsilon^2}\right)\partial_{xx}F^n_j\right)\\
      &=\frac{1}{\varepsilon\left(1+\frac{\sigma\Delta t}{\varepsilon^2}\left(1+\frac{\sigma\Delta t}{2\varepsilon^2}\right)\right)}\left(\partial_x E^{n+1/2}_j+\mathcal{O}\left(\Delta t^2\right)+\frac{\sigma \Delta t}{2\varepsilon^2}\partial_x E^{n+1}_j +\mathcal{O}\left(\frac{\Delta t^3}{\varepsilon^2}\right)\right).
  \end{align*}
  This inequality produces two new error terms that must be controled. Noticing that $x\mapsto x^{2}/\left( 1+x\left( 1+x/2 \right) \right)$ is strictly increasing on $\mathbb{R}_{+}$ and using the hypotheis $\Delta t = \mathcal{O}\left( \Delta x \left( \sigma \Delta x+\sqrt{\left( \sigma\Delta x \right)^{2}+\varepsilon^{2}} \right) \right)$, then:
  \begin{align*}  
     &\frac{\Delta t^{2}}{\varepsilon\left(1+\frac{\sigma\Delta t}{\varepsilon^2}\left(1+\frac{\sigma\Delta t}{2\varepsilon^2}\right)\right)} =\frac{\varepsilon^{3}\Delta t^{2}}{\varepsilon^{2}\left( \varepsilon^{2}+\sigma\Delta t \right)+\left( \sigma\Delta t \right)^{2}/2}\\
    &\qquad\qquad=\mathcal{O}\left( \varepsilon\Delta x^{2}\frac{\varepsilon^{2}\left( \sigma \Delta x+\sqrt{\left( \sigma\Delta x \right)^{2}+\varepsilon^{2}} \right)^{2}}{\frac{\varepsilon^{2}}{2}\left( \sigma \Delta x+\sqrt{\left( \sigma\Delta x \right)^{2}+\varepsilon^{2}} \right)^{2}+\frac{\varepsilon^{4}}{2}+\left( \varepsilon\Delta t \right)^{2}/2} \right)
    =\mathcal{O}\left( \varepsilon\Delta x^{2} \right).
  \end{align*}
  Similarly, $x\mapsto x^{3}/\left( 1+x\left( 1+x/2 \right) \right)$ is strictly increasing on $\mathbb{R}_{+}$ and using that $\Delta t = \mathcal{O}\left( \Delta x \sqrt{\left( \sigma \Delta x \right)^{2}+\varepsilon^{2}} \right)$, then:
  \begin{align*}
    \frac{\Delta t^3}{\varepsilon^3\left(1+\frac{\sigma\Delta t}{\varepsilon^2}\left(1+\frac{\sigma\Delta t}{2\varepsilon^2}\right)\right)}
    &=\mathcal{O}\left(\varepsilon\Delta x^2\frac{\Delta x\left(\Delta x^2+\varepsilon^2\right)\sqrt{\Delta x^2+\varepsilon^2}}{\varepsilon^4+\varepsilon^2\Delta x \sqrt{\Delta x^2+\varepsilon^2}+\Delta x^2\left(\Delta x^2+\varepsilon^2\right)}\right)\\
    &= \mathcal{O}\left(\varepsilon\Delta x^2\right),
  \end{align*}
  using $\sqrt{\Delta x^2+\varepsilon^2}\le \Delta x$ in the numerator and observing that the denominator is smaller than $\Delta x^2\left(\Delta x^2+\varepsilon^2\right)$.
Coming back to $c_j^n(F)$, this provides:
  \begin{align*}
  c_j^n(F) =&\frac{C_j^N(F)}{1+\frac{\sigma\Delta t}{\varepsilon^2}\left(1+\frac{\sigma \Delta t}{2\varepsilon^2}\right)} + \mathcal{O}\left(\Delta x^2\right), \\
  C_j^N(F) =& \left(1+\frac{\sigma\Delta t}{\varepsilon^2}\left(1+\frac{\sigma \Delta t}{2\varepsilon^2}\right)\right)\partial_t F^{n+1/2}_j \\
           &+\frac{1}{\varepsilon}\left(\partial_x E^{n+1/2}_j+\frac{\sigma\Delta t}{2\varepsilon^2}\partial_x E^{n+1}_j\right)+\frac{\sigma}{\varepsilon^2}\left(1+\frac{\sigma \Delta t}{2\varepsilon^2}\right)F_j^n
 \end{align*}
 
 Using the identities
      ${\partial_t F^{n+1/2}_j}+(1/\varepsilon){\partial_x E^{n+1/2}_j}=-(\sigma/\varepsilon^2){F^{n+1/2}_j}$
   and $F^n_j+\Delta t \partial_t F^{n+1/2}_j = F^{n+1}_j +\mathcal{O}\left(\Delta t^3\right)$ leads to:
   \begin{equation*}
   C_j^N(F) = \frac{\sigma}{\varepsilon^2}\left(\left(1+\frac{\sigma \Delta t}{2\varepsilon^2}\right)F^{n+1}_j+\frac{\Delta t}{2\varepsilon}\partial_xE^{n+1}_j-F^{n+1/2}_j + \mathcal{O}\left(\left(1+\frac{\sigma \Delta t}{2\varepsilon^2}\right)\Delta t^3\right)\right).
   \end{equation*}
   Now, using $\partial_xE^{n+1}_j+(\sigma/\varepsilon)F^{n+1}_j = \partial_{t}F^{n+1}_j$ provides
   \begin{align*}
   C_j^N(F) &= \frac{\sigma}{\varepsilon^2}\left(F^{n+1}_j-F^{n+1/2}_j - \frac{\Delta t}{2}\partial_t F^{n+1}_j + \mathcal{O}\left(\left(1+\frac{\sigma \Delta t}{2\varepsilon^2}\right)\Delta t^3\right)\right) \\
   &= \mathcal{O}\left(\frac{\sigma\Delta t^2}{\varepsilon^2} \right)+ \mathcal{O}\left(\frac{\sigma}{\varepsilon^2}\left(1+\frac{\sigma \Delta t}{2\varepsilon^2}\right)\Delta t^3\right) = \mathcal{O}(\Delta x^2)
   \end{align*}
   using the previous estimates.
   
  Concerning $E$, we have:
  \begin{equation*}
    c_j^n(E)= \frac{E^{n+1}_j-E^n_j}{\Delta t}+\frac{M_1}{\varepsilon}\frac{F^n_{j+1}-F^n_{j-1}}{2\Delta x}-\frac{M_1^+\Delta t}{2\varepsilon^2}\frac{E^n_{j+1}-2E^n_j+E^n_{j-1}}{\Delta x^2}.
  \end{equation*}
  First, the flux discretization provides:
  \begin{equation*}
      \frac{M_1}{\varepsilon}\frac{F^n_{j+1}-F^n_{j-1}}{2\Delta x}=\frac{M_1}{\varepsilon}\partial_x F^n_j +\frac{M_1\Delta x^2}{2\varepsilon}\partial_{xxx}F^n_j +\mathcal{O}\left(\frac{M_1}{\varepsilon}\Delta x^4\right),
  \end{equation*}
  with $\partial_{xxx}F^n_j=\mathcal{O}\left(\varepsilon\right)$ again such that the error is $\mathcal{O}\left(\Delta x^2\right)$. Second, the diffusion term provides:
  \begin{equation*}
      \frac{M_1^+\Delta t}{2\varepsilon^2}\frac{E^n_{j+1}-2E^n_j+E^n_{j-1}}{\Delta x^2}=\frac{M_1^+\Delta t}{2\varepsilon^2}\partial_{xx}E^n_j+\mathcal{O}\left(\frac{M_1^+\Delta t}{2\varepsilon^2}\Delta x^2\right),
  \end{equation*}
  but
    ${M_1^+\Delta t}/({2\varepsilon^2})
    \leq 1/{\sigma}=\mathcal{O}\left(1\right)$  so that $\mathcal{O}\left({M_1^+\Delta t}\Delta x^2 / (2\varepsilon^2)\right)=\mathcal{O}\left(\Delta x^2\right).$
  Under the usual stability conditions, we have $\mathcal{O}\left(\Delta t^2\right)=\mathcal{O}\left(\Delta x^2\right)$. Now we have:
  \begin{equation*}
    c_j^n(E)
    {=}\partial_t E^{n+1/2}_j+\frac{M_1}{\varepsilon}\partial_xF^n_j-\frac{M_1^+\Delta t}{2\varepsilon^2}\partial_{xx}E^n_j + \mathcal{O}\left(\Delta x^2\right),
  \end{equation*}
  which leads to:
  \begin{align*}
    c_j^n(E) &= \frac{C_j^n(E)}{1+\frac{\sigma \Delta t}{2\varepsilon^2}\left(1+\frac{\sigma \Delta t}{2\varepsilon^2}\right)} + \mathcal{O}\left(\Delta x^2\right)\\
    C_j^n(E) &= \bigg(\left(1+\frac{\sigma\Delta t}{2\varepsilon^2}\left(1+\frac{\sigma\Delta t}{2\varepsilon^2}\right)\right)\partial_tE^{n+1/2}_j +\frac{1}{\varepsilon}\partial_xF^n_j-\frac{\Delta t}{2\varepsilon^2}\left(1+\frac{\sigma\Delta t}{2\varepsilon^2}\right)\partial_{xx}E^n_j\bigg)\\
   &=\frac{-\Delta t}{2\varepsilon}\Bigg(\frac{\partial_x F^{n+1/2}-\partial_x F^n_j}{\Delta t/2}+\frac{1}{\varepsilon}\partial_{xx}E^n_j
   \\ &\qquad\qquad+\frac{\sigma}{\varepsilon^2}\left(\partial_x F^{n+1/2}_j+\frac{\Delta t}{2\varepsilon}\left(\partial_{xx}E^n_j+\frac{\sigma}{\varepsilon}\partial_xF^{n+1/2}_j\right)\right)\Bigg).
  \end{align*}
  Using the usual Taylor expansions we obtain in the end:
  \begin{align*}
      c_j^n(E)
      {=}&\mathcal{O}\left(\frac{\Delta t^2}{\varepsilon\left(1+\frac{\sigma\Delta t}{2\varepsilon^2}\right)}\right)+\mathcal{O}\left(\frac{\Delta t^3}{\varepsilon^4\left(1+\frac{\sigma\Delta t}{2\varepsilon^2}\left(1+\frac{\sigma \Delta t}{2\varepsilon^2}\right)\right)}\right) + \mathcal{O}(\Delta x^2),
  \end{align*}
and the estimations found for $c_j^n(F)$ leads to simplify $c_j^n(E) = \mathcal{O}(\Delta x^2)$.

\section{Alternative discretizations for order 1 ImEx schemes: upwind discretization}
\label{supsec:HHEord1FV}
Although we have shown that scheme \cref{eq:AP1_ctr_HHE} demonstrates strong robustness property under the right set of stability conditions \cref{cnd:LInftyStbltyHHEord1}, the presence of a lower bound to ensure that shock do not trigger spurious oscillations can be seen as too restrictive.
 An attempt to remove this lower bound constraint can be made in using an upwind discretization of the fluxes, leading to the ImEx1-upwd scheme:
\begin{subequations}\label{eq:AP1_dctr_HHE}
  \begin{align}
    \frac{E^{n+1}_j - E^n_j}{\Delta t}&+\frac{M}{\varepsilon}\frac{F^n_{j+1}-F^n_{j-1}}{2\Delta x}\\
    &-\left( \frac{M\Delta x}{2\varepsilon} + \frac{M\Delta t}{\varepsilon^2} \right)\frac{E^n_{j+1}-2E^n_{j}+E^n_{j-1}}{\Delta x^2} =0, \nonumber\\ 
    \frac{F^{n+1}_j - F^n_j}{\Delta t}&+\frac{M}{\varepsilon}\frac{E^n_{j+1}-E^n_{j-1}}{2\Delta x}\\
    &-\left( \frac{M\Delta x}{2\varepsilon} + \frac{M\Delta t}{\varepsilon^2} \right)\frac{F^n_{j+1}-2F^n_{j}+F^n_{j-1}}{\Delta x^2} =-M\frac{\sigma}{\varepsilon^2}F^n_j. \nonumber
  \end{align}
\end{subequations}
\begin{theorem}\label{thm:LinftyStabAP1dctr}
  Assuming that periodic \cref{bc:periodic} or hybrid \cref{bc:hybrid} boundary conditions are used, the ImEx1-upwd scheme \cref{eq:AP1_dctr_HHE} is $l^{\infty}$-diminishing for variables $u,v$ under the condition:
  \begin{equation}\label{cnd:LInftyStbltyHHE_AP1dctr}
    \Delta t \leq \Delta t_{max} :=\frac{\sigma \Delta x^2}{4}\left(\left(\frac{1}{2}-\frac{\varepsilon}{\sigma \Delta x}\right)+\sqrt{\left(\frac{\varepsilon}{\sigma \Delta x}-\frac{1}{2}\right)^2+2\left(\frac{2\varepsilon}{\sigma \Delta x}\right)^2}\right).
  \end{equation}
\end{theorem}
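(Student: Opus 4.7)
The plan is to mirror the structure of the proof of \Cref{thm:LinftyStabAP1ctr}. First, I would switch to the Goldstein--Taylor variables $u = E+F$ and $v = E-F$, since the convective part of scheme \cref{eq:AP1_dctr_HHE} diagonalizes in this basis. The only real difference with the centered case is that the diffusion coefficient in \cref{eq:AP1_dctr_HHE} is now $M\Delta x/(2\varepsilon) + M\Delta t/\varepsilon^{2}$ instead of $M\Delta t/\varepsilon^{2}$, so I expect the extra $M\Delta x/(2\varepsilon)$ to remove the need for $\Delta t_{min}$.

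Setting $\alpha = M\Delta t/(\varepsilon\Delta x)$, $\beta = M\Delta t^{2}/(\varepsilon^{2}\Delta x^{2})$ and $\gamma = M\sigma\Delta t/\varepsilon^{2}$, a direct computation rewrites \cref{eq:AP1_dctr_HHE} as
\begin{align*}
  u^{n+1}_{j} &= \mu_{1} u^{n}_{j} + \mu_{2} u^{n}_{j+1} + \mu_{3} u^{n}_{j-1} + \mu_{4} v^{n}_{j}, \\
  v^{n+1}_{j} &= \mu_{1} v^{n}_{j} + \mu_{2} v^{n}_{j-1} + \mu_{3} v^{n}_{j+1} + \mu_{4} u^{n}_{j},
\end{align*}
with $\mu_{1} = 1-\alpha-2\beta-\gamma/2$, $\mu_{2} = \beta$, $\mu_{3} = \alpha+\beta$, $\mu_{4} = \gamma/2$. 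One checks immediately that $\mu_{1}+\mu_{2}+\mu_{3}+\mu_{4}=1$ and that $\mu_{2},\mu_{3},\mu_{4}\geq 0$ unconditionally; this is precisely the benefit of the upwind diffusion, since the coefficient $\mu_{2}$ that caused the lower bound $\Delta t_{min}$ in \Cref{thm:LinftyStabAP1ctr} is now simply $\beta\ge 0$.

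The only remaining constraint is $\mu_{1}\ge 0$. Substituting $M = 1/(1+\sigma\Delta t/\varepsilon^{2})$ and clearing denominators turns this into
\begin{equation*}
  2\Delta t^{2} - \Delta x\bigl(\tfrac{\sigma\Delta x}{2}-\varepsilon\bigr)\Delta t - \varepsilon^{2}\Delta x^{2} \leq 0,
\end{equation*}
a quadratic in $\Delta t$ whose positive root, after factoring out $\sigma\Delta x$, is exactly the $\Delta t_{max}$ announced in \cref{cnd:LInftyStbltyHHE_AP1dctr}. The main technical point is simply the bookkeeping in identifying this positive root with the stated form; no delicate estimate is required.

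Once $\mu_{i}\geq 0$ for all $i$, the update writes $u^{n+1}_{j}$ and $v^{n+1}_{j}$ as convex combinations of $u^{n}, v^{n}$ values at neighbouring cells, which yields
\begin{equation*}
  \min_{0\leq j\leq N+1}(u^{n}_{j},v^{n}_{j}) \leq u^{n+1}_{j}, v^{n+1}_{j} \leq \max_{0\leq j\leq N+1}(u^{n}_{j},v^{n}_{j}).
\end{equation*}
To close the argument I would then copy verbatim the boundary-condition discussion from \Cref{thm:LinftyStabAP1ctr}: under \cref{bc:periodic} the ghost-cell values coincide with physical values, and under \cref{bc:hybrid} one checks that $u^{n}_{0}=-v^{n}_{1}$, $v^{n}_{0}=-u^{n}_{0}$, etc., so that in either case the maximum over ghost cells is bounded by the maximum over physical cells, giving \cref{eq:LinftyStabilityDef}. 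The main obstacle I anticipate is purely algebraic, namely checking that my factored form of the positive quadratic root matches exactly the formula written in \cref{cnd:LInftyStbltyHHE_AP1dctr}; all the stability reasoning itself is simpler than in \Cref{thm:LinftyStabAP1ctr} since no lower bound arises.
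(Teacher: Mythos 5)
Your proposal is correct and follows essentially the same route as the paper: rewrite the scheme in the variables $u,v$ as a convex-combination update, observe that the upwind diffusion makes all off-diagonal coefficients nonnegative unconditionally so that only the coefficient of $u^n_j$ (i.e. the condition $1-\tfrac{2M\Delta t^2}{\varepsilon^2\Delta x^2}-\tfrac{M\Delta t}{\varepsilon\Delta x}-\tfrac{M\sigma\Delta t}{2\varepsilon^2}\geq 0$) remains, and solve the resulting quadratic in $\Delta t$. Your coefficients $\mu_i$, the reduced quadratic $2\Delta t^{2}-\Delta x\bigl(\tfrac{\sigma\Delta x}{2}-\varepsilon\bigr)\Delta t-\varepsilon^{2}\Delta x^{2}\leq 0$, and its positive root all check out against \cref{cnd:LInftyStbltyHHE_AP1dctr}, and you supply more of the intermediate algebra than the paper does.
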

\begin{proof}
  The proof is very similar to that of \cref{thm:LinftyStabAP1ctr}, using a form of the scheme \cref{eq:AP1_dctr_HHE} equivalent to the reordering \cref{eq:reorderedAP1_ctr_HHE_uv}
there is no longer any lower bound condition and the only stability restriction is
\begin{equation*}
  1-\frac{2M\Delta t^2}{\varepsilon^2\Delta x^2}-\frac{M\Delta t}{\varepsilon\Delta x}-\frac{M\sigma \Delta t}{2\varepsilon^2}\geq 0,
\end{equation*}
which leads to \cref{cnd:LInftyStbltyHHE_AP1dctr}
\end{proof}
\begin{theorem}
  In the regime $\Delta x \approx \varepsilon$, the ImEx1-upwd scheme \cref{eq:AP1_dctr_HHE} loses accuracy in the sense $c_j^n(E) = \mathcal{O}(1)$.
\end{theorem}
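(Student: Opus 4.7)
The plan is to inject a smooth exact solution $(E,F)$ of \cref{eq:HHE} into the ImEx1-upwd scheme \cref{eq:AP1_dctr_HHE} and perform a Taylor expansion to isolate the dominant consistency error term in the intermediate regime $\Delta x \approx \varepsilon$. The structural difference with \cref{eq:AP1_ctr_HHE} is the additional numerical diffusion of size $M\Delta x/(2\varepsilon)$ coming from the upwind reformulation; this extra term is benign when $\Delta x \ll \varepsilon$, but it will turn out to be $\mathcal{O}(1)$ in the intermediate regime and will be the sole reason for the loss of accuracy.

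First I would expand each finite difference quotient by Taylor's formula around $(t^n,x_j)$ to obtain
\begin{equation*}
  c_j^n(E) = \partial_t E^n_j + \frac{M}{\varepsilon}\partial_x F^n_j - \Bigl(\frac{M\Delta x}{2\varepsilon} + \frac{M\Delta t}{\varepsilon^2}\Bigr)\partial_{xx}E^n_j + R,
\end{equation*}
where $R$ collects the standard remainders $\mathcal{O}(\Delta t\,\|\partial_{tt}E\|_\infty)$, $\mathcal{O}\bigl(M\Delta x^2\|\partial_{xxx}F\|_\infty/\varepsilon\bigr)$ and $\mathcal{O}\bigl(M(\Delta x^2+\Delta t\Delta x^2)\|\partial_{xxxx}E\|_\infty/\varepsilon^2\bigr)$. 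These are exactly the remainders that appear in the proof of \Cref{thm:accuracyHHEord1} and one checks, using $\|\partial_{xxx}F\|_\infty=\mathcal{O}(\varepsilon)$ from the asymptotic regime described in \Cref{sbsec:HHE_ar} and the parabolic scaling $\Delta t \lesssim \sigma\Delta x^2$ imposed by \cref{cnd:LInftyStbltyHHE_AP1dctr}, that $R = o(1)$ uniformly as $\Delta x \to 0$ with $\Delta x \approx \varepsilon$.

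Next, using the exact equation \cref{eq:HHE_E} to replace $\partial_t E = -\partial_x F/\varepsilon$ and the identity $M-1 = -(\sigma\Delta t/\varepsilon^2)\,M$ gives
\begin{equation*}
  \partial_t E + \frac{M}{\varepsilon}\partial_x F = -\frac{\sigma M\Delta t}{\varepsilon^3}\partial_x F.
\end{equation*}
Then, using the Chapman--Enskog relation $F = -(\varepsilon/\sigma)\partial_x E + \mathcal{O}(\varepsilon^2)$ of \Cref{sbsec:HHE_ar}, hence $\partial_x F = -(\varepsilon/\sigma)\partial_{xx} E + \mathcal{O}(\varepsilon^2)$, one sees that $-\sigma M\Delta t\,\partial_x F/\varepsilon^3$ cancels exactly the $-M\Delta t\,\partial_{xx}E/\varepsilon^2$ contribution up to $\mathcal{O}(M\Delta t)$, which is itself $\mathcal{O}(\Delta x^2)$. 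We are therefore left with
\begin{equation*}
  c_j^n(E) = -\frac{M\Delta x}{2\varepsilon}\partial_{xx}E^n_j + o(1).
\end{equation*}

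Finally I would evaluate the surviving coefficient in the regime $\Delta x \approx \varepsilon$. Parabolic stability \cref{cnd:LInftyStbltyHHE_AP1dctr} forces $\Delta t = \mathcal{O}(\sigma\Delta x^2)$, so $\sigma \Delta t/\varepsilon^2 = \mathcal{O}(1)$ and consequently $M = 1/(1+\sigma\Delta t/\varepsilon^2) = \Theta(1)$. Hence $M\Delta x/(2\varepsilon) = \Theta(1)$, and for any initial datum with $\partial_{xx}E \not\equiv 0$ one obtains $c_j^n(E) = \mathcal{O}(1)$ that does not vanish as $\Delta x \to 0$, which is the claimed loss of consistency. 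The main obstacle is bookkeeping: one must carefully verify that every term placed inside $R$ (including the mixed $\varepsilon^{-1}$ and $\varepsilon^{-2}$ prefactors) is genuinely $o(1)$ in the intermediate regime, so that no hidden cancellation with the surviving upwind viscosity $-M\Delta x\,\partial_{xx}E/(2\varepsilon)$ can occur; since that surviving term carries a definite sign (it is a genuine numerical dissipation), such a cancellation is ruled out and the obstruction is intrinsic to the upwind discretization.
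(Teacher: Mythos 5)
Your proof is correct and follows essentially the same route as the paper: all terms of the consistency error except the extra upwind viscosity are controlled exactly as in the analysis of the ImEx1-ctr scheme (\Cref{thm:accuracyHHEord1}), and the surviving term $\frac{M\Delta x}{2\varepsilon}\partial_{xx}E$ is $\Theta(1)$ when $\Delta x\approx\varepsilon$ since $M\approx 1$ there. The only (inconsequential) imprecision is that the residual of your Chapman--Enskog cancellation is $\mathcal{O}(M\Delta t/\varepsilon)$ rather than $\mathcal{O}(M\Delta t)$, which is still $\mathcal{O}(\Delta x)=o(1)$ in the regime considered.
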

\begin{proof}
  As compared to the consistency error conducted on scheme \cref{eq:AP1_ctr_HHE} we have an additional error term for variable $E$ of the form
    $({M\Delta x}/{2\varepsilon})\partial_{xx} E$.
  However in the regime $\varepsilon\approx\Delta x$, we have $M\approx 1$ and $\Delta x/\varepsilon \approx 1$ and consequently this new term is of order $\mathcal{O}\left( \Delta x^{0} \right)$ in this regime.
\end{proof}
\begin{remark}
  The impossibility to use the classical upwind discretization is not commonly discussed in literature but here we have shown that it does not yield an asymptotic preserving scheme.
\end{remark}

\section{Reverse Runge-Kutta framework}
\label{supsec:RRK}

The approach we have used in order to integrate in time the source term is essential in the construction of the time-space-ImEx method proposed in the paper. We have called it a Reverse Runge-Kutta method. It has been introduced in the classical monographs \cite{Butcher2016SM}, where it is referred to as {\sl reflected}, and \cite{Hairer1993SM}, under the denomination {\sl adjoint} from a classical explicit RK method. 
Even if some general properties of these this specific class of method are provided in these
works, the efficiency and computational aspects were rather investigated in \cite{Bokhoven80,Cash82SM} (where they are called implicit endpoint quadrature formulas and mono-implicite RK Formulae respectively), and 
more recently in \cite{Kalitkin14SM}  and \cite{Skvortsov17SM} (called inverse RK schemes and implicit RK methods obtained as a result of the inversion of explicit methods).

Before introducing the core principle of the approach, since it is essentially based on 
 explicit Runge-Kutta methods, we introduce first-order and second order RK methods and the related notations and Butcher arrays. 
 The general form of the autonomous ODE we want to solve reads:
\begin{equation*}
  \text{d}_{t} U  = f(U),
\end{equation*}
where $f$ is assumed to verify the assumptions of the Cauchy-Lipschitz theorem. 
With these notations the explicit Euler method and its corresponding Butcher array read:
\begin{equation*}
  \begin{array}{c|c}
    0 &  \\
    \hline
      & 1
  \end{array}
  \quad\quad
  U_{n+1} = U_{n}+\Delta t f(U_{n}).
\end{equation*}
The general form of explicit second-order methods parametrized by $\alpha \in \left(0,1\right]$ reads:
\begin{equation*}
 \begin{array}{c|cc}
    0 &   &  \\
    \alpha & \alpha & \\
    \hline
     \vphantom{\Bigg(}& \displaystyle1-\frac{1}{2\alpha} & \displaystyle \frac{1}{2\alpha}
  \end{array}
\quad\quad 
  \begin{aligned}
      U_{n_{1}} &= U_{n}+ \alpha \Delta t f\left( U_{n} \right),\\
        U_{n+1} &= U_{n}+\Delta t \left( 1-\frac{1}{2\alpha} \right)f\left( U_{n} \right)+\frac{\Delta t}{2\alpha}f\left( U_{n_{1}} \right).\\
  \end{aligned}
 \end{equation*}
For $\alpha = 1$ we retrieve the Heun method and for $\alpha = 1/2$, the classical RK2:
\begin{equation*}
   \begin{aligned}
    U_{n_{1}} &= U_{n} + \frac{\Delta t}{2} f\left( U_{n} \right),\\
     U_{n+1}   &= U_{n} + \Delta t f\left( U_{n_{1}} \right).\\
    \end{aligned}
\end{equation*}

The principle of the Reverse Runge-Kutta method is the following: we substitute $-t$ to $t$ (reversed time) and we apply the explicit method starting from the final point.
For instance, at order one and using the explicit Euler method we obtain:
\begin{align*}
  U_{n} = U_{n+1}-\Delta t f\left( U_{n+1} \right), 
\end{align*}
which boils down to the  backward Euler method:
\begin{align*}
U_{n+1} = U_{n}+\Delta t f\left( U_{n+1} \right).
\end{align*}
At second order, we obtain:
\begin{align*}
  U_{n} &= U_{n+1}-\Delta t \left( 1-\frac{1}{2\alpha} \right) f\left( U_{n+1} \right)-\frac{\Delta t}{2\alpha} f\Big( U_{n+1}-\alpha \Delta t f(U_{n+1}) \Big), 
 \end{align*}
which can be rewritten as:
\begin{align*}
U_{n+1} &= U_{n} +\Delta t \left( 1-\frac{1}{2\alpha} \right) f\left( U_{n+1} \right)+\frac{\Delta t}{2\alpha} f\Big( U_{n+1}-\alpha \Delta t f(U_{n+1}) \Big).
\end{align*}

Regarding absolute stability, the general stability function of an explicit second-order method is
\begin{equation*}
  R\left( z \right) = 1+z+\frac{z^{2}}{2}.
\end{equation*}
Consequently, the stability function $R^{*}(z)$ of second order Reverse RK methods can be obtained by substituting $-z \to z$ and taking the inverse of $R$, leading to:
\begin{equation*}
  R^{*}\left( z \right) = \frac{1}{R(-z)}.
\end{equation*}
Because $1/\left|1-z+z^{2}/2 \right|\leq 1$ for all $z\in \mathbb{C}^{-}$ the second-order Reverse RK methods are $A$-stable and even $L$-stable.

These Reverse RK methods can be recast within the framework of classical RK methods (the Butcher array can be obtained explicitly from the original one  \cite{Hairer1993SM}):
\begin{equation*}
  \left|
  \begin{aligned}
    U_{n_{1}} &= U_{n}+\frac{\Delta t}{2\alpha} f\left( U_{n_{1}} \right)+ \Delta t \left( 1-\frac{1}{2\alpha} \right) f\left( U_{n_{2}} \right)-\alpha \Delta t f\left( U_{n_{2}} \right),\\
    U_{n_{2}} &= U_{n}+\frac{\Delta t}{2\alpha} f\left( U_{n_{1}} \right)+ \Delta t \left( 1-\frac{1}{2\alpha} \right) f\left( U_{n_{2}} \right),\\
    U_{n+1} &= U_{n}+\frac{\Delta t}{2\alpha} f\left( U_{n_{1}} \right)+ \Delta t \left( 1-\frac{1}{2\alpha} \right) f\left( U_{n_{2}} \right).
  \end{aligned}  
  \right. \quad
\end{equation*}
One notices that $U_{n_{1}} = U_{n_{2}}-\alpha \Delta t f\left( U_{n_{2}} \right)$, so that it suffices to solve for $U_{n_{2}}$ directly. The corresponding Butcher arrays for $\alpha \in \left(0,1\right]$ and $\alpha=1/2$ (Reverse RK2 method used in the paper) read:
\begin{equation*}
  \begin{array}{c|cc}
    \vphantom{\Bigg(}1-\alpha\vphantom{} & \displaystyle \frac{1}{2\alpha}  & \displaystyle 1-\frac{1}{2\alpha}-\alpha \\
    \vphantom{\Bigg(}1 & \displaystyle \frac{1}{2\alpha} & \displaystyle 1-\frac{1}{2\alpha}\\
    \hline
    \ \vphantom{\Bigg(}& \displaystyle \frac{1}{2\alpha} &  \displaystyle 1-\frac{1}{2\alpha}
  \end{array}
  \qquad\qquad
  \begin{array}{c|cc}
    \displaystyle\frac{1}{2} &  \vphantom{\bigg(}1 &\displaystyle -\frac{1}{2} \\
    1 & 1 &  \ \ \vphantom{\Bigl(} 0\\
    \hline
     \ \vphantom{\Bigl(} & 1 & \ \ 0
  \end{array}
\end{equation*}
which shows that the Reverse RK methods are stiffly accurate and have excellent stability properties. They can be shown to be computationally efficient as mono-implicit methods.

\section{Riemann problem, second order and $l^\infty$-stability without limitation} \label{supsec:RP}
We consider the in this section the same Riemann-problem initial condition as in \Cref{subsec:numRP}, which reads in the linear case:
\begin{equation*}
  E(0,x)  = \mathbf{1}_{\left\{ x\leq \frac{x_R+x_L}{2} \right\}} E_L + \mathbf{1}_{\left\{ x> \frac{x_R+x_L}{2} \right\}} E_R, \quad
  F(0,x ) = 0,
\end{equation*}
and in the nonlinear case
\begin{equation*}
  \rho(0,x)  = \mathbf{1}_{\left\{ x\leq \frac{x_R+x_L}{2} \right\}} \rho_L + \mathbf{1}_{\left\{ x> \frac{x_R+x_L}{2} \right\}} \rho_R, \quad
  \left( \rho u \right)(0,x ) = 0.
\end{equation*}

We show that for both the linear case in the hyperbolic regime (meaning $\varepsilon = 0.5$) and the nonlinear case in the intermediate parabolic-hyperbolic regime (meaning $\varepsilon = 5.10^{-2}$ and $\varepsilon = 10^{-2}$), the second-order centered methods do not trigger spurious oscillations around discontinuities without using additional numerical diffusion or slope limiters, provided that the time step falls within the acceptable range of values determined in \Cref{cnd:LInftyStbltyHHE_AP2ctr} implying $l^\infty$-stability. Within this regimes, the methods are quite accurate. 

\begin{figure}[htbp]
  \centering
  \includegraphics[width=1.\columnwidth]{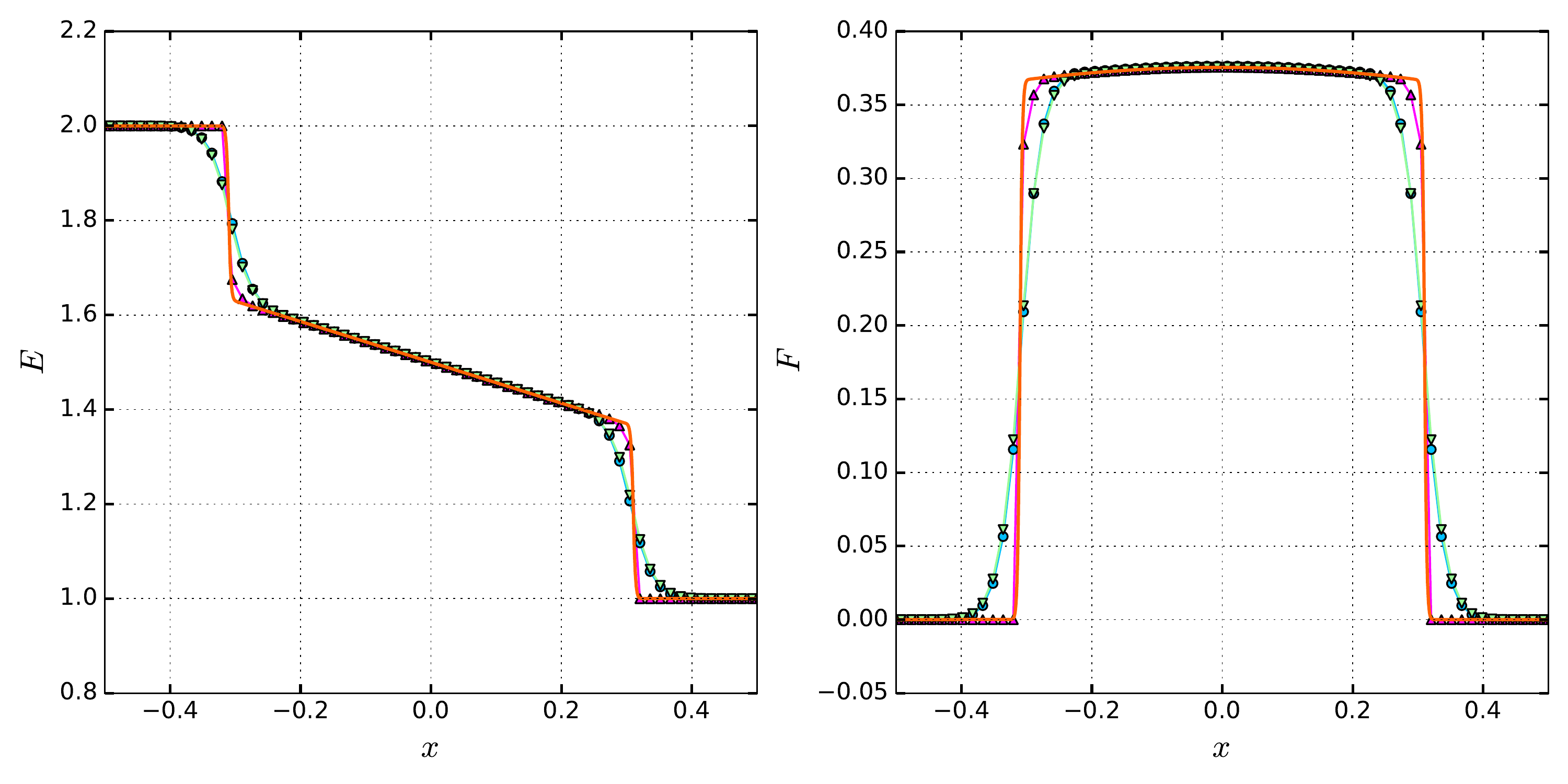}
  \caption{Variables $E$ (left) and $F$ (right) obtained at time $t_{f} = 20\Delta t_{ctr}\approx 0.15$, for $\varepsilon = 0.5$, $E_{L} = 2$ and $E_{R} = 1$, respectively with the ImEx2-ctr method ($\triangle$), $\Delta t_{ctr} = \left( \Delta t_{max}+\Delta t_{min} \right)/2$, from \cref{cnd:LInftyStbltyHHE_AP2ctr1,cnd:LInftyStbltyHHE_AP2ctr2}, ImEx2-minmod method ($\circ$), $\Delta t_{mnmd} = 1.2 \Delta t_{max}$, from \cref{cnd:L2stbltyHHEord2} and MUSCL-Hancock method with Strang splitting, Reverse RK2 ($\triangledown$), $\Delta t_{MH} = 0.9 \min\left( 2\varepsilon^{2}/\sigma_{max},\varepsilon \Delta x/\left( u_{max}+c \right) \right)$, using $N=64$ cells, with the latter scheme used as reference (solid line) with $N=2048$ cells.}
  \label{fig:ShockHHE2ndOrdComp}
\end{figure}

\begin{figure}[htbp]
  \centering
  \includegraphics[width=1.\columnwidth]{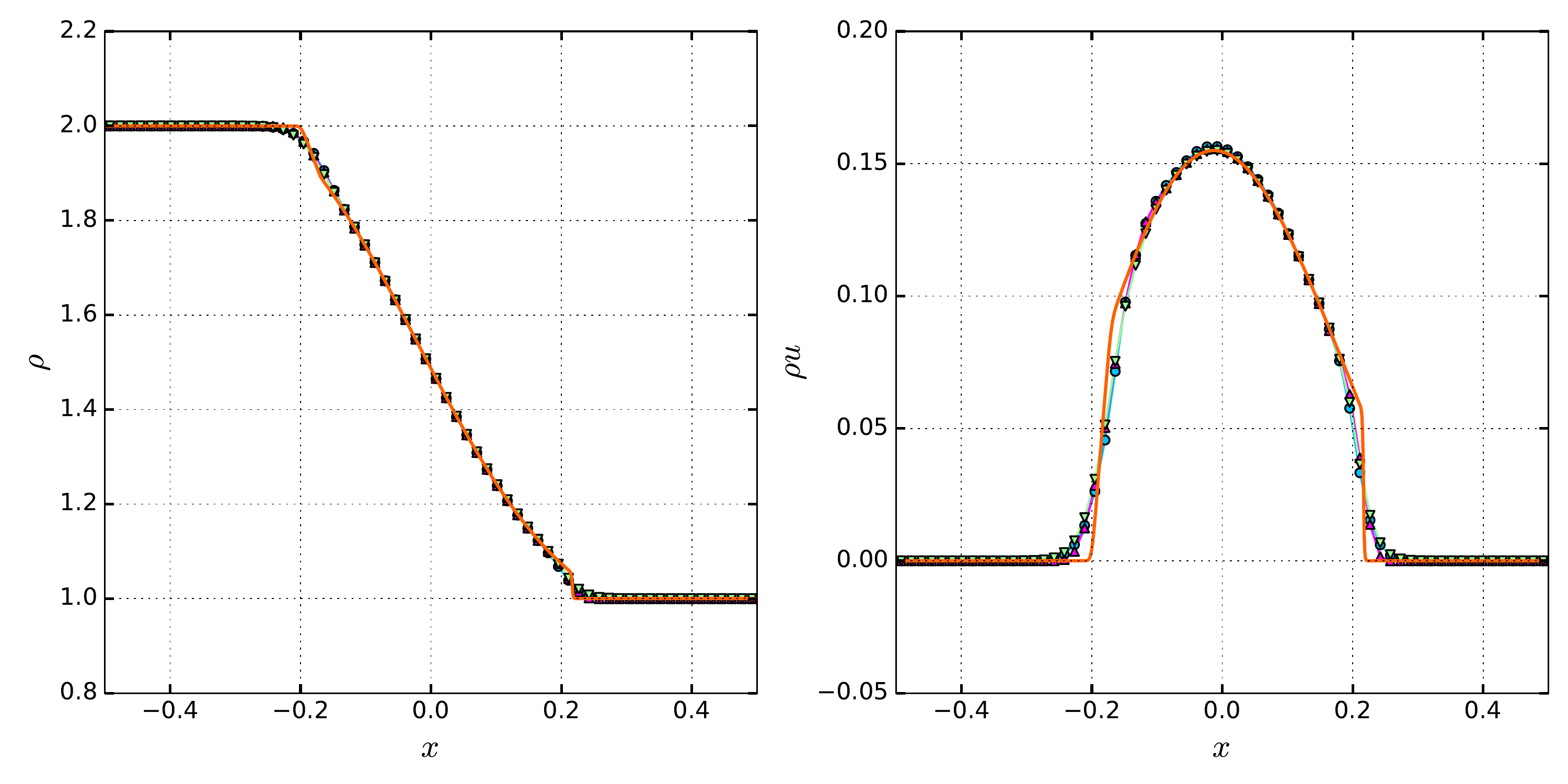}
  \caption{Mass $\rho$ and momentum $\rho u$ obtained at time $t_{f} = 0.01$ for $\varepsilon = 5\times 10^{-2}$, $\rho_{L} = 2$ and $\rho_{R} = 1$, respectively with the ImEx2-ctr method ($\triangle$), $\Delta t_{ctr} = 1.87\Delta t_{max}$, from \cref{cnd:L2stbltyHHEord2}, ImEx2-minmod method ($\circ$), $\Delta t_{mnmd} = 0.9 \Delta t_{max}$, from \cref{cnd:L2stbltyHHEord2} and MUSCL-Hancock method with Strang splitting, Reverse RK2 ($\triangledown$), $\Delta t_{MH} = 0.9 \min\left( 2\varepsilon^{2}/\sigma_{max},\varepsilon \Delta x/\left( u_{max}+c \right) \right)$, using $N=64$ cells, with the latter scheme used as reference (solid line) with $N=2048$ cells.}
  \label{fig:ShockEF2ndOrdInterRegimeComp}
\end{figure}

\begin{figure}[htbp]
  \centering
  \includegraphics[width=1.\columnwidth]{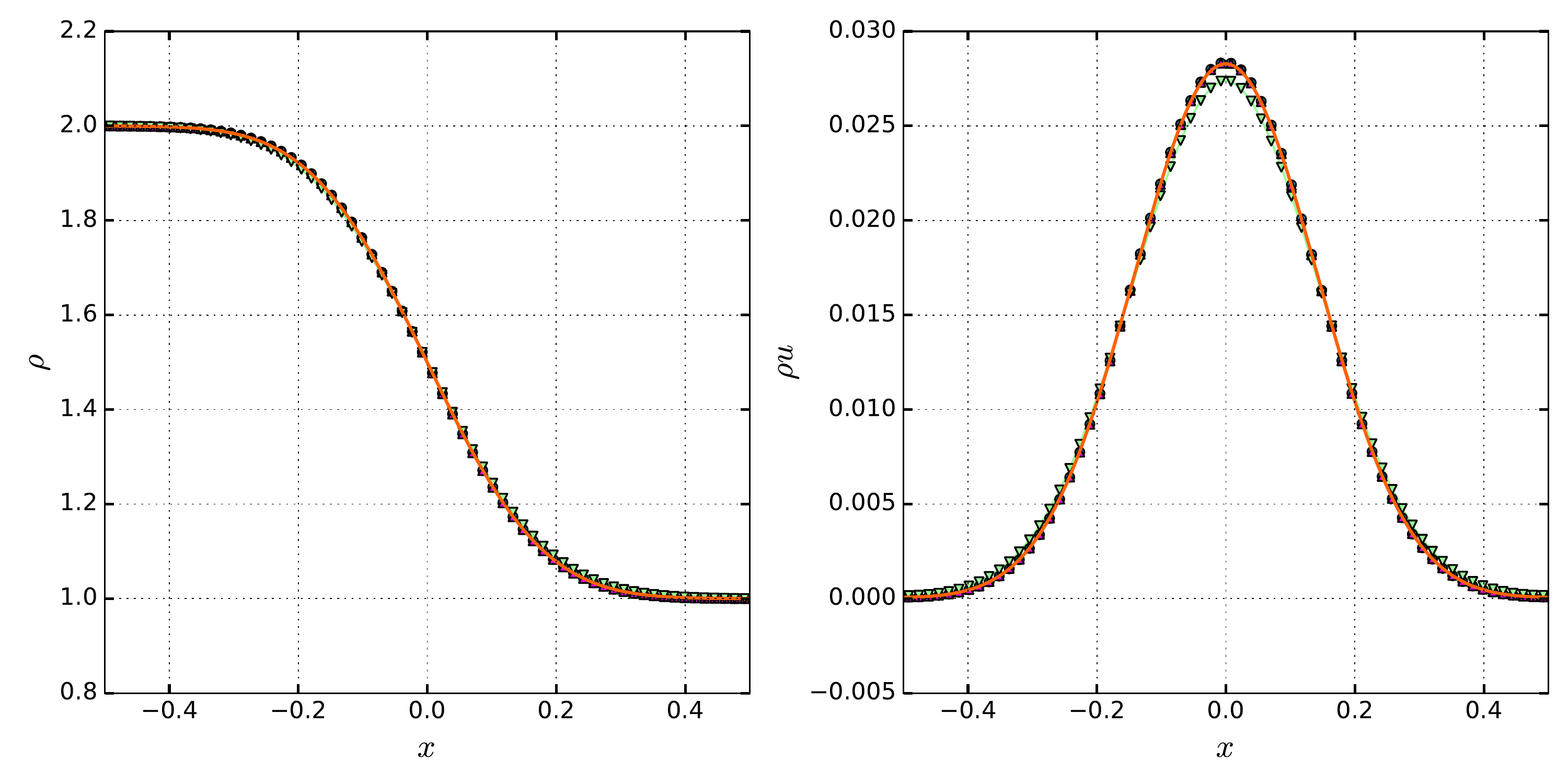}
  \caption{Mass $\rho$ and momentum $\rho u$ obtained at time $t_{f} = 0.01$ for $\varepsilon = 10^{-2}$, $\rho_{L} = 2$ and $\rho_{R} = 1$, respectively with the ImEx2-ctr method ($\triangle$), $\Delta t_{ctr} = 0.9\Delta t_{max}$, from \cref{cnd:L2stbltyHHEord2}, ImEx2-minmod method ($\circ$), $\Delta t_{mnmd} = 0.9 \Delta t_{max}$, from \cref{cnd:L2stbltyHHEord2} and MUSCL-Hancock method with Strang splitting, Reverse RK2 ($\triangledown$), $\Delta t_{MH} = 0.9 \min\left( 2\varepsilon^{2}/\sigma_{max},\varepsilon \Delta x/\left( u_{max}+c \right) \right)$, using $N=64$ cells, with the latter scheme used as reference (solid line) with $N=2048$.}
  \label{fig:ShockEF2ndOrdDiffRegimeComp}
\end{figure}

For the linear hyperbolic regime case, shown in \Cref{fig:ShockHHE2ndOrdComp}, one observes that the ImEx2-ctr method does not generate oscillations in the vicinity of the discontinuity but, as expected, it is also significantly less diffusive than the ImEx2-minmod and MUSCL-Hancock methods, which both achieve similar performances.
We emphasize however that in this case the estimations for $\Delta t_{min}$ and $\Delta t_{max}$ provided by \cref{cnd:LInftyStbltyHHE_AP2ctr} collide and therefore one must compute numerically the bound $\Delta t_{min}$ and $\Delta t_{max}$ corresponding to conditions \cref{cnd:LInftyStbltyHHE_AP2ctr1,cnd:LInftyStbltyHHE_AP2ctr2} and choose $\Delta t$ within the proper interval $\left[ \Delta t_{min}, \Delta t_{max} \right]$.

It is remarkable to observe that unlike in the case of purely hyperbolic equations, for which second order methods always require slope limiters in order to be stable around shocks, the presence of a source term guarantees that there is always an interval of possible time steps for which the ImEx2-ctr method can be $l^\infty$-stable.

Regarding the nonlinear intermediate and diffusive regimes case ($\varepsilon = 5\times 10^{-2}$ and $\varepsilon = 10^{-2}$), the interval of admissible time steps $[\Delta t_{min}, \Delta t_{max}]$ is larger than in the hyperbolic regime and as a result the ImEx2-ctr method does not generate any oscillations, as can be seen in \Cref{fig:ShockEF2ndOrdInterRegimeComp,fig:ShockEF2ndOrdDiffRegimeComp}. We can also note that the ImEx2-minmod method yields equivalent performances to that of the ImEx2-ctr method, as expected from the theoretical study of the paper, while the classic MUSCL-Hancock method with splitting is already lagging behind in terms of accuracy.

\section*{Acknowledgments}
We acknowledge the precious help of Lo\"ic Gouarin and the use of the samurai code he develops  (\url{https://github.com/hpc-maths/samurai}) within the framework of Initiative HPC@Maths (PI M. Massot and L. Gouarin - \url{https://initiative-hpc-maths.gitlab.labos.polytechnique.fr/site/}).

\bibliographystyle{siamplain}
\bibliography{Reboul_Pichard_Massot_ImEx_2022}
\end{document}